\newtheorem{definition}{Definition}
\newtheorem{theorem}{Theorem}
\newtheorem{lemma}{Lemma}
\newtheorem{corollary}{Corollary}
\newtheorem{conjecture}{Conjecture}
\title{Diophantine Inequalities as a Problem of Difference between Consecutive Primes}
\author{Felix Sidokhine}
\begin{document}

\maketitle

\abstract{
\noindent In the present paper, we have developed a method for solving \textit{diophantine inequalities} using their relationship with the \textit{difference between consecutive primes}.
\newline
\newline
Using this approach we have been able to prove some theorems, including Ingham's exponential theorem as well as some new results. Diophantine inequalities and their connection with Cramer's and Andrica's conjectures are also discussed.
}

\section{Introduction}

In the present paper, we have developed a method for solving \textit{diophantine inequalities} using their relationship with the \textit{difference between consecutive primes}.
\par
\noindent Our approach to Bertrand's postulate and other problems involving primes is based on using \textit{auxiliary inequalities} and \textit{estimates of difference between consecutive primes}. An \textit{auxiliary inequality} for an \textit{initial inequality} is formulated using prime numbers and should be solved over primes. Here is an example plan of proving a proposition:
\par

\begin{description}
\item[Proposition] For any integer $n, n \geq 16$ there is a prime number $q$ such that $\frac{n}{2} < q < n - 4$
\item[Auxiliary proposition] For any prime $p, p \geq 17$ there is a prime $q$ such that $\frac{p+3}{2} < q < p $
\item[Main proposition] If the \textit{auxiliary inequality} is true, then the \textit{initial inequality} is true. 
\item[Final proposition] The \textit{auxiliary inequality} is true.
\end{description}

\par
In the case of the problems studied in this paper, it seems more efficient to look for an \textit{auxiliary inequality} for their \textit{initial inequality}. In many cases, the proofs will use well-known estimates of the \textit{difference between consecutive primes}, most of which can be found either in the original papers \cite{Dusart:1998aa}, \cite{Baker:2001aa} and other facts about primes which can be found in Sierpinski's book \cite{Sierpinski:1964aa}.

\section{Main Results}

Using the approach described above, we have been able to prove the following statements:
\newline

\noindent \textbf{Bertrand's theorem:} For any natural $k$ there exists a computable constant $C(k)$ such that for each integer $n > C(k)$ there is a prime number $p$ such that $\frac{n}{2} < p < n - 2k$
\newline

\noindent \textbf{Fractional theorem:} For any real $k \geq 2$ there is a computable constant $C(k)$ such that for any integer $n > C(k)$ there are at least two primes $p,q$ where $\frac{k-1}{k}n < p,q < \frac{k}{k-1}n$
\newline

\noindent \textbf{Exponential theorem:} For any real $k \geq 2 + \epsilon$, $\epsilon = \frac{2}{19}$, there exists a computable constant $C(k)$ such that for each integer $n > C(k)$, there is a prime number $p$ where $(n-1)^k < p < n^k$
\newline

\noindent \textbf{Theorem} (exponent 3, A.E. Ingham, 1937): There exists a computable constant $C$ such that for each integer $n > C$ there is a prime number $p$ where $(n - 1)^3 < p < n^3$.
\newline

\noindent \textbf{Strong theorem} (exponent 3): There exists a computable constant $C$ such that for each integer       $n > C$ there are at the least two prime numbers $p,q$ where $(n - 1)^3 < p, q < n^3$.
\newline

\noindent \textbf{Brocard's theorem} (exponent 3): There exists a computable constant $C(B)$ such that for each pair primes $p_{n-1}, p_n > C(B)$ there are at the least four prime numbers $p, q, r, s$ where $p_{n-1}^3 < p, q, r, s < p_n^3$.
\newline

\noindent \textbf{Strong Brocard's theorem} (exponent 3): For any natural $k$ there exists such a constant $C(k)$ that for each prime $p_{n-1} > C(k) $ the interval $(p_{n-1}^3, p_n^3)$ contains at the least $2k$ prime numbers. 
\newline

\noindent \textbf{Weak Brocard's theorem} (exponent 2): There exists a computable constant $C(B)$ such that for each pair neighbouring primes $p_{n-1}, p_n > C(B)$ and $(p_n - p_{n-1}) > 3p_n ^\frac{1}{20}$ there are at the least two prime numbers $p, q$ with $p_{n-1}^2 < p, q < p_n^2$. 
\newline

\noindent For the following statements, we will need to define the notion of \textit{Legendre's prime numbers}.

\begin{definition}
A prime $p_n$ is a Legendre prime number if $p_n > a^2 > p_{n-1}$ for some integer $a$. $\Pi_L = \{2,5,11,17,29,37,...\}$, is the set of Legendre primes.
\end{definition}

\noindent \textbf{Legendre's conjecture:} Let for any neighbouring primes $p_{k-1}, p_k$ where $p_k$ is a Legendre prime, the following inequality be true:
\begin{equation}
p_k - p_{k-1} <  2 \sqrt{p_k} + 1
\end{equation}
then for each integer $n$ there is a prime $p$ such that $(n - 1)^2 < p < n^2$.
\newline

\noindent \textbf{Legendre's and Andrica's conjecture:} Let Andrica's conjecture, $\sqrt{p_k} - \sqrt{p_{k-1}} < 1 $, hold for each pair of neighbouring primes $p_{k-1}, p_k$ where $p_k$ is a Legendre prime number, then for each integer $n$ there is a prime $p$ such that $(n - 1)^2 < p < n^2$.
\newline

\noindent \textbf{Oppermann's and the modified Andrica's conjecture:} Let the modified Andrica's conjecture, $\lim_{n \to \infty} (\sqrt{p_{n+1}} - \sqrt{p_n}) = 0$ , be true. Then there exists such a constant $C(O)$ that for any $n > C(O)$ there are at least two primes $p, q$ such that $n^2 < p < n^2 + n$ and $n^2 + n < q < (n + 1)^2$.
\newline

\noindent \textbf{Diophantine inequality and Cramer's conjecture:}  Let $p_n - p_{n-1} = O(\ln^2 p_{n-1})$, hold for each pair of neighbouring primes $p_{n-1}, p_n$ (Cramer's conjecture). Then for any real $\epsilon$ where $0 < \epsilon \leq 1$, there exists such a constant $C(\epsilon)$ such that for each integer $n > C(\epsilon)$ there is a prime number $p$ where $(n - 1)^{1 + \epsilon}  < p < n^{1+\epsilon}$.
 
\section{Proof of the Theorems}

\subsection{Bertrand's theorem}

\begin{theorem}\label{theorem1bt}
For any natural $k$ there exists a computable constant $C(k)$ such that for each integer $n > C(k)$ there is a prime $p$ with $\frac{n}{2} < p < n - 2k$.
\end{theorem}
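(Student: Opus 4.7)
Following the auxiliary-inequality method outlined in the introduction, the plan is to recast the integer question as one about a nearby prime variable and reduce it to a quantitative estimate on the gap between consecutive primes. Concretely, I would work with the auxiliary statement: there exists a computable constant $C_{0}(k)$ such that for every prime $p>C_{0}(k)$ there is a prime $q$ satisfying $\frac{p}{2}+k<q<p$.

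For the transfer from the auxiliary to the theorem, given $n>C(k)$ I would take $p$ to be the smallest prime with $p\geq n-2k$; by Bertrand's postulate such a $p$ exists and lies below $2(n-2k)$, and for $n$ large enough it exceeds $C_{0}(k)$. Applying the auxiliary to this $p$ produces a prime $q$ with $p/2+k<q<p$. The inequality $p\geq n-2k$ forces $p/2+k\geq n/2$, so $q>n/2$. On the other hand, $q<p$ together with the minimality of $p$ means that $q$ is at most the prime immediately preceding $p$, which is strictly less than $n-2k$; being an integer, $q\leq n-2k-1<n-2k$, as required.

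It then remains to prove the auxiliary, which says precisely that for $p$ large the gap between $p$ and the previous prime is less than $p/2-k$. This is a very weak statement: the prime number theorem already gives $p_{m+1}-p_{m}=o(p_{m})$, and Dusart's explicit bounds~\cite{Dusart:1998aa} (of the shape $p_{m+1}-p_{m}\ll p_{m}/\log^{2}p_{m}$ past a concrete threshold) yield an explicit $C_{0}(k)$ growing essentially linearly in $k$. The main obstacle is therefore not the reduction or the auxiliary itself --- both are routine once an effective prime-gap bound is in hand --- but simply the choice of that bound so as to produce a clean and reasonably small $C(k)$; Dusart's estimate is already more than adequate, and only the boundary bookkeeping around $p=n-2k$ requires any care at all.
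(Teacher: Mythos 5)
Your proposal is correct and follows essentially the same route as the paper: you reduce the integer statement to an auxiliary statement over primes (the paper's Theorem~\ref{theorem2bt}, with interval $(\tfrac{p+2k-1}{2},p)$ versus your marginally stronger $(\tfrac{p}{2}+k,p)$) and then settle that auxiliary by Dusart's explicit gap bound, exactly as in the paper's Theorem~\ref{theorem4bt}. The only cosmetic difference is that you perform the transfer directly by taking the least prime $p\ge n-2k$ and reading off the required prime $q$, whereas the paper argues by a minimal counterexample with a case split on whether $n-2k$ is prime; the two arguments are contrapositives of the same observation.
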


\begin{theorem}\label{theorem2bt}
For any natural $k$ there exists a computable constant $C(k)$ such that for each prime $p > C(k)$ there is a prime $q$ with $\frac{p + 2k - 1}{2} < q < p$.
\end{theorem}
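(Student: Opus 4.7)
The plan, in line with the paper's stated methodology, is to take $q$ to be the immediately preceding prime $p_{n-1}$ when $p = p_n$. The upper bound $q < p$ is then automatic, and only the lower bound $p_{n-1} > \frac{p_n + 2k - 1}{2}$ remains to verify. This rearranges to the gap estimate
\[
p_n - p_{n-1} < \frac{p_n}{2} - k + \frac{1}{2},
\]
so the entire proof reduces to comparing the gap between consecutive primes against a quantity of size roughly $p_n/2$.

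For fixed $k$ and $p_n$ large, this is a very weak requirement, since the right-hand side is asymptotically $p_n/2$ while every nontrivial bound in the literature gives $p_n - p_{n-1} = o(p_n)$. I would invoke Dusart's effective estimate from \cite{Dusart:1998aa}, such as $p_{n+1} - p_n < p_n/\ln^2 p_n$ valid for all $p_n$ beyond a concrete threshold, and substitute it into the displayed inequality. Solving the resulting explicit inequality in $p_n$ produces a computable bound $C(k)$, whose dependence on $k$ is essentially linear.

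The principal obstacle is not analytical but bookkeeping: to keep $C(k)$ \emph{truly} computable rather than merely asymptotic, one must use an effective gap bound from \cite{Dusart:1998aa} or \cite{Baker:2001aa} and carefully treat small primes lying below the estimate's own range of validity. These exceptional primes can be handled either by direct numerical inspection or by absorbing them into $C(k)$. Beyond this step, no deeper work is required, because the target interval $\bigl(\tfrac{p + 2k - 1}{2},\, p\bigr)$ has length nearly $p/2$, which dwarfs any effective gap bound currently known.
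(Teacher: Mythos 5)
Your proposal is correct and follows essentially the same route as the paper: reduce the statement to the gap inequality $p_n - p_{n-1} < \frac{p_n - 2k + 1}{2}$ by taking $q = p_{n-1}$ (the paper's Lemma \ref{lemma2bt}), then verify that inequality via Dusart's effective bound $p_{n+1} \leq p_n(1 + \frac{1}{\ln^2 p_n})$ and absorb the small-prime exceptions into $C(k)$ (the paper's Theorem \ref{theorem4bt}, which takes $C(k)$ roughly at the first prime exceeding $4k$, consistent with your observation that the dependence on $k$ is essentially linear). No substantive difference.
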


\begin{lemma}\label{lemma1bt}
If Theorem \ref{theorem2bt} is true, then Theorem \ref{theorem1bt} is true. 
\end{lemma}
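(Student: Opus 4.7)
The plan is to reduce Theorem \ref{theorem1bt} to Theorem \ref{theorem2bt} by associating to each sufficiently large integer $n$ a suitable prime $p$, applying the auxiliary inequality to $p$, and showing that the resulting prime $q$ lies in the target interval $(n/2, n-2k)$. The natural choice is to take $p$ to be the smallest prime with $p \geq n - 2k$.

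Given $n > C(k)$, with $C(k)$ chosen large enough that this $p$ exceeds the constant appearing in Theorem \ref{theorem2bt}, the auxiliary inequality delivers a prime $q$ satisfying $(p + 2k - 1)/2 < q < p$. I would then verify $n/2 < q < n - 2k$ by splitting on whether $n - 2k$ is itself prime. If $n - 2k$ is composite, then $p \geq n - 2k + 1$, so $(p + 2k - 1)/2 \geq n/2$ gives $q > n/2$, while every prime strictly below $p$ is at most $n - 2k - 1$, giving $q < n - 2k$. If $n - 2k$ is prime, then $p = n - 2k$ and Theorem \ref{theorem2bt} yields $q$ with $(n - 1)/2 < q < n - 2k$; the upper inequality is immediate, and the lower inequality is strict $q > n/2$ whenever $n$ is odd.

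The only remaining case, $n$ even with $n - 2k$ prime, forces $n - 2k = 2$ since $2$ is the only even prime, i.e.\ $n = 2k + 2$, which is excluded by taking $C(k) > 2k + 2$. Thus for every $n > C(k)$ the prime $q$ furnished by Theorem \ref{theorem2bt} satisfies Theorem \ref{theorem1bt}, completing the reduction.

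The main obstacle I anticipate is the careful handling of the strict inequalities at the endpoint $n - 2k$, which necessitates the case split above; the choice of $p$ as the smallest prime $\geq n - 2k$ (rather than, say, the smallest prime $> n$ or the largest prime $\leq n$) is what makes both inequalities fall out cleanly. Beyond this, the argument is a direct substitution of the bounds of Theorem \ref{theorem2bt} into the bounds of Theorem \ref{theorem1bt}, together with absorbing the auxiliary constant into $C(k)$.
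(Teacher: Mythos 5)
Your proposal is correct and is essentially the paper's argument in contrapositive form: the paper assumes a minimal counterexample $n_0$ and shows the interval $\left(\frac{p_n+2k-1}{2},p_n\right)$ around the smallest prime $p_n \geq n_0-2k$ would be prime-free, splitting on whether $n_0-2k$ is prime exactly as you do, whereas you run the same containment of intervals directly. Your explicit treatment of the parity edge case ($n$ even with $n-2k$ prime forcing $n-2k=2$) is a point the paper glosses over, but the underlying reduction is the same.
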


\begin{proof}
Let \textit{theorem \ref{theorem1bt}} be false. Then given a fixed $k$, there exists a minimal integer $n_0$ such that the interval $(\frac{n_0}{2},n_0 - 2k)$ contains no primes.
\newline
\newline
Let $n_0 - 2k$ be a composite number and let $p_{n-1},p_n$ be two neighbouring primes such that

\begin{equation}\label{eqn2}
p_{n-1} < n_0 - 2k < p_n 
\end{equation}

\noindent \textit{Inequality \ref{eqn2}} implies that $p_{n-1} < n_0 - 2k \leq p_n-1$ or that $n_0 \leq p_n + 2k - 1$.
\newline
\newline
\noindent The interval $(\frac{p_n + 2k - 1}{2},p_n)$ also contains no prime numbers: this is due to the fact that the intervals $(n_0 - 2k, p_n)$ and $(\frac{p_n + 2k - 1}{2}, n_0 - 2k)$ do not contain any primes, since $\frac{n_0}{2} \leq \frac{p_n + 2k - 1}{2}$
\newline
\newline
Let $n_0 - 2k$ be a prime number. This implies $p_n = n_0 - 2k$. Since the interval $(\frac{n_0}{2}, n_0 - 2k) = (\frac{p_n + 2k}{2},p_n)$ does not contain any prime numbers, hence neither does $(\frac{p_n + 2k - 1}{2},p_n)$ which is a contradiction of theorem \ref{theorem2bt}, assumed to be true.
\end{proof}

\begin{lemma}\label{lemma2bt}
Theorem \ref{theorem2bt} is true if and only if every neighbouring prime numbers $p_{n-1},p_n$ satisfy the following inequality:

\begin{equation}
p_n - p_{n-1} < \frac{p_n -2k + 1}{2}
\end{equation}
\end{lemma}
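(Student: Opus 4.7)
The plan is to observe that the lemma is essentially a direct rephrasing of Theorem~\ref{theorem2bt} obtained by picking $q = p_{n-1}$, together with a one-line algebraic manipulation. I would prove the two implications in turn.

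For the forward direction, I assume Theorem~\ref{theorem2bt} and fix a prime $p_n > C(k)$. By the theorem there is a prime $q$ with $\frac{p_n + 2k - 1}{2} < q < p_n$. Since $p_{n-1}$ is by definition the largest prime strictly less than $p_n$, any such $q$ satisfies $q \leq p_{n-1}$, and therefore $p_{n-1} > \frac{p_n + 2k - 1}{2}$. Multiplying through by $2$ and rearranging gives $2p_n - 2p_{n-1} < p_n - 2k + 1$, that is, $p_n - p_{n-1} < \frac{p_n - 2k + 1}{2}$, which is the claimed inequality.

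For the reverse direction, I assume that the inequality holds for every pair of neighbouring primes $p_{n-1}, p_n$ with $p_n$ sufficiently large. Reversing the same chain of algebra yields $p_{n-1} > \frac{p_n + 2k - 1}{2}$, and because $p_{n-1} < p_n$ trivially, the prime $q := p_{n-1}$ witnesses the conclusion of Theorem~\ref{theorem2bt} for $p = p_n$.

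There is no real obstacle here; the only thing to be careful about is the handling of the threshold $C(k)$, and the logical point that the inequality at issue is symmetric in content to the existence statement, since the closest prime below $p_n$ is $p_{n-1}$ by definition. I would therefore state the equivalence in the form "Theorem~\ref{theorem2bt} holds with the same constant $C(k)$ if and only if the inequality holds for every pair of neighbouring primes $p_{n-1}, p_n$ with $p_n > C(k)$", to keep the quantifier on $n$ aligned on both sides.
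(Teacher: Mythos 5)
Your proof is correct and follows essentially the same route as the paper's: take $q \leq p_{n-1}$ in the forward direction to place $p_{n-1}$ in the interval $\left(\frac{p_n+2k-1}{2},\,p_n\right)$, and use $p_{n-1}$ itself as the witness in the reverse direction. Your explicit remark about aligning the threshold $C(k)$ on both sides of the equivalence is a point the paper's statement of the lemma glosses over, but it does not change the substance of the argument.
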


\begin{proof}
Let theorem \ref{theorem2bt} be true. Then, for any given fixed $k$ there is a prime $q$ such that $q \in  (\frac{p_n + 2k - 1}{2},p_n)$. We can claim $q \leq p_{n-1}$ and therefore $p_{n-1} \in  (\frac{p_n + 2k - 1}{2},p_n)$. Then:

\begin{equation}
p_n - \frac{p_n - 2k + 1}{2} < p_{n-1} < p_n
\end{equation}

\noindent Combining the terms of the inequality, we can obtain:

\begin{equation}
p_n - p_{n-1} < \frac{p_n - 2k + 1}{2}
\end{equation}

\noindent Now let the inequality $p_n - p_{n-1} < \frac{p_n - 2k + 1}{2}$ be true for some fixed given $k$. Then we have:

\begin{equation}
 p_{n-1} > \frac{p_n + 2k - 1}{2}
\end{equation} 

\noindent Therefore $p_{n-1} \in (\frac{p_n + 2k - 1}{2},p_n)$ and therefore \textit{lemma \ref{lemma2bt}} holds.

\end{proof}

\begin{theorem}\label{theorem3bt} 
Bertrand's theorem is true if any pair of neighbouring primes $p_{n-1},p_n$ satisfy the inequality
\begin{equation}
p_n - p_{n-1} < \frac{p_n - 2k + 1}{2}
\end{equation}

\end{theorem}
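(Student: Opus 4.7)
The plan is simply to compose Lemma~\ref{lemma2bt} and Lemma~\ref{lemma1bt}. Assume as hypothesis that every pair of neighbouring primes $p_{n-1}, p_n$ satisfies $p_n - p_{n-1} < \frac{p_n - 2k + 1}{2}$. First I would invoke the ``if'' direction of Lemma~\ref{lemma2bt}, which translates this prime-gap bound directly into the conclusion of Theorem~\ref{theorem2bt}: for each prime $p$ exceeding some computable constant, there is a prime $q$ with $\frac{p + 2k - 1}{2} < q < p$. Then I would feed that conclusion into Lemma~\ref{lemma1bt}, whose contrapositive argument converts the existence of such a $q$ for each large prime $p$ into the existence of a prime in the interval $(\tfrac{n}{2},\, n - 2k)$ for each sufficiently large integer $n$, which is exactly the statement of Theorem~\ref{theorem1bt}, i.e.\ Bertrand's theorem in the stated generalised form.

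There is essentially no obstacle in the proof of Theorem~\ref{theorem3bt} itself; it is a formal packaging of the two preceding lemmas. The only point worth tracking carefully is how the constants propagate. The ``if'' direction of Lemma~\ref{lemma2bt} produces a threshold below which the hypothesised prime-gap inequality may fail, and Lemma~\ref{lemma1bt} converts that prime-threshold into an integer-threshold $C(k)$ via the elementary observation $n_0 \leq p_n + 2k - 1$. I would close the write-up with a single sentence noting that the resulting $C(k)$ can be chosen effectively from the threshold in the hypothesis together with the offset $2k$. The genuine work — actually establishing the prime-gap inequality $p_n - p_{n-1} < \frac{p_n - 2k + 1}{2}$ for all sufficiently large neighbouring primes — lies outside the content of this statement and would require invoking a quantitative estimate on $p_n - p_{n-1}$ such as those of Dusart or Baker--Harman--Pintz cited in the introduction.
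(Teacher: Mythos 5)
Your proposal is correct and is exactly the paper's argument: the paper likewise proves Theorem~\ref{theorem3bt} by composing the ``if'' direction of Lemma~\ref{lemma2bt} (hypothesised gap bound $\Rightarrow$ Theorem~\ref{theorem2bt}) with Lemma~\ref{lemma1bt} (Theorem~\ref{theorem2bt} $\Rightarrow$ Theorem~\ref{theorem1bt}), though the paper states this in a single sentence without spelling out the constant propagation. Your additional remark on how $C(k)$ arises via $n_0 \leq p_n + 2k - 1$ is a harmless and accurate elaboration of the same route.
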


\begin{proof}
Theorem \ref{theorem3bt} is a consequence of theorems \ref{theorem1bt} and \ref{theorem2bt} and lemmas \ref{lemma1bt} and \ref{lemma2bt}
\end{proof}

\begin{theorem}\label{theorem4bt}
For an integer $k \geq 1$ there is a computable constant $C(k)$ such that for any pair of neighbouring primes $p_{n-1},p_n > C(k)$, the following inequality holds:

\begin{equation}
p_n - p_{n-1} < \frac{p_n - 2k + 1}{2}
\end{equation}
\end{theorem}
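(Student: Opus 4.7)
The target inequality $p_n - p_{n-1} < (p_n - 2k + 1)/2$ rearranges to $2(p_n - p_{n-1}) + (2k - 1) < p_n$ and asks only that the gap $p_n - p_{n-1}$ be less than roughly $p_n/2$ once $p_n$ is large enough (with slack depending on $k$). This is dramatically weaker than any nontrivial effective upper bound on prime gaps, so the plan is simply to cite such an estimate and convert the resulting threshold into a computable $C(k)$.

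The first step is to invoke the Baker, Harman and Pintz estimate $p_n - p_{n-1} < p_n^{21/40}$, valid for every $p_n$ exceeding some effective constant $C_0$ (this is precisely the Baker reference in the paper's bibliography). Substituting into the target inequality reduces the problem to the sufficient condition
\begin{equation*}
2\, p_n^{21/40} + 2k - 1 < p_n,
\end{equation*}
equivalently $f(p_n) := p_n - 2 p_n^{21/40} - (2k-1) > 0$. Since $f$ is eventually strictly increasing and tends to infinity, elementary analysis yields a computable $x_0(k)$ with $f(x) > 0$ for all $x > x_0(k)$. Setting $C(k) := \max\bigl(x_0(k),\, C_0\bigr)$ gives the theorem: if $p_{n-1} > C(k)$ then $p_n > p_{n-1} > C(k) \geq C_0$, Baker's estimate applies, and the target inequality follows.

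The only genuine obstacle is bookkeeping: one must confirm that the cited prime-gap estimate is effective (i.e.\ that $C_0$ is explicitly computable), and then solve the transcendental inequality for $p_n$ to extract $x_0(k)$ as an explicit expression in $k$. Since the target allows very large gaps (up to about $p_n/2$), one could just as well use a much weaker estimate, such as one of Dusart's explicit bounds like $p_n - p_{n-1} < p_{n-1}/\ln^2 p_{n-1}$ past a computable threshold; the choice of estimate affects only the size of $C(k)$, not the soundness of the proof. I expect the whole argument to reduce to a few lines once the desired gap bound is plugged in.
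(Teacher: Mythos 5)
Your proposal is correct and follows essentially the same route as the paper: cite an effective upper bound on the prime gap that is far smaller than $p_n/2$, substitute it into the target inequality, and solve for a computable threshold $C(k)$. The paper happens to use Dusart's bound $p_n - p_{n-1} \leq p_{n-1}/\ln^2 p_{n-1}$ (valid past $p_{465}$), which is exactly the alternative you mention in your last paragraph, rather than the Baker--Harman--Pintz estimate you lead with; the choice affects only the explicit form of $C(k)$.
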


\begin{proof}
Using the result from \cite(Dusart) that any $n > 463$, $p_{n+1} \leq p_n(1 + \frac{1}{\ln^2p_n})$, we have the following inequality:

\begin{equation}
p_n - p_{n-1} \leq \frac{p_{n-1}}{\ln^2 p_{n-1}} < \frac{p_n}{\ln^2p_{n-1}}
\end{equation}

\noindent Therefore, we need to find an $n$ such that the following inequality is satisfied;

\begin{equation}
\frac{p_n}{\ln^2p_{n-1}} < \frac{p_n - 2k + 1}{2}
\end{equation}

\noindent Since $\ln(p_n)$ is a strictly increasing function, then there exists such an $n_0$ that for any $n > n_0 + 1$ this inequality takes place; in this particular problem it is sufficient to take $n_0$ in such a way that $p_{n_0-1} < 4k < p_{n_0}$. Therefore the inequality will take place for any $n$ greater than $\max(n_0+1,465)$ 
\end{proof}

\begin{theorem}\label{theorem5bt}
Theorem \ref{theorem1bt} is true for all integers $n > \max(p_r,p_{465})$ where $p_{r-1} < 4k < p_r$.
\end{theorem}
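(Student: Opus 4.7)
The plan is to chain together the four preceding results without introducing any new ingredient. Theorem \ref{theorem4bt} supplies the gap inequality $p_n - p_{n-1} < \frac{p_n - 2k + 1}{2}$ past a computable threshold, Lemma \ref{lemma2bt} reformulates this gap inequality as Theorem \ref{theorem2bt}, and Lemma \ref{lemma1bt} (packaged as Theorem \ref{theorem3bt}) transfers Theorem \ref{theorem2bt} back to Theorem \ref{theorem1bt}. All that remains is to calibrate the constants.

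First I would fix $k \geq 1$, let $r$ be the index with $p_{r-1} < 4k < p_r$ (so we are in the hypothesis of the statement), and read off from the proof of Theorem \ref{theorem4bt}---with its internal $n_0$ taken to be our $r$---that the gap inequality holds whenever the prime index $n$ exceeds $\max(r+1, 465)$; equivalently, whenever $p_n$ clears the threshold $T(k) := p_{\max(r+1,465)}$. By Lemma \ref{lemma2bt} this is exactly the content of Theorem \ref{theorem2bt} with explicit constant $C(k) = T(k)$.

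Next I would run Lemma \ref{lemma1bt} in contrapositive. Suppose some integer $N > \max(p_r, p_{465})$ violated Theorem \ref{theorem1bt}, so $(N/2, N - 2k)$ were prime-free. The case analysis of Lemma \ref{lemma1bt}---split according to whether $N - 2k$ is prime or composite---produces neighbouring primes $p_{n-1}, p_n$ with $p_n$ close to $N - 2k$ for which $(\frac{p_n + 2k - 1}{2}, p_n)$ is also prime-free. The bound $N > \max(p_r, p_{465})$, together with $p_r > 4k$, is precisely what is needed to push $p_n$ past $T(k)$, so Theorem \ref{theorem2bt} applies and contradicts the prime-free interval.

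The main obstacle, and really the only subtle point, is the bookkeeping: three different roles of ``$n$'' appear in the chain---the free integer variable of Theorem \ref{theorem1bt}, the prime index of Theorem \ref{theorem4bt}, and the intermediate integer $N - 2k$ manufactured inside Lemma \ref{lemma1bt}---and one must check that the integer bound $\max(p_r, p_{465})$ is large enough to activate the prime-index bound $\max(r+1, 465)$ of Theorem \ref{theorem4bt}. No new analytic estimate enters; the only number-theoretic input is the Dusart bound already invoked in Theorem \ref{theorem4bt}.
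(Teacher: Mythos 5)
Your proposal follows exactly the same route as the paper, whose entire proof of this statement is the single sentence that it is ``a consequence of theorems \ref{theorem2bt}, \ref{theorem3bt} and \ref{theorem4bt} as well as lemmas \ref{lemma1bt} and \ref{lemma2bt}.'' You chain the same four results in the same order, and in fact you are more explicit than the paper about the one genuinely delicate point (verifying that the integer threshold $\max(p_r,p_{465})$ pushes the relevant prime $p_n$ past the index threshold $\max(r+1,465)$ of Theorem \ref{theorem4bt}), which the paper does not address at all.
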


\begin{proof}
Theorem \ref{theorem5bt} is a consequence of theorems \ref{theorem2bt}, \ref{theorem3bt} and \ref{theorem4bt} as well as lemmas \ref{lemma1bt} and \ref{lemma2bt}
\end{proof}

\subsection{Exponential Theorem}

\begin{theorem}\label{theorem1et}
For any fixed real $k \geq \frac{40}{19}$ there exists a constant $C(k)$ such that for each integer $n > C(k)$ there is a prime $q$ where $n - (k - \frac{1}{2})n^{\frac{k-1}{k}} < q < n$
\end{theorem}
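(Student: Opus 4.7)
The plan is to mimic the pattern used for Bertrand's theorem above: isolate an auxiliary inequality on the difference between consecutive primes that directly implies the stated bound, then verify that auxiliary inequality via a known estimate for $p_n - p_{n-1}$.

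The key ingredient is the Baker--Harman--Pintz estimate from \cite{Baker:2001aa}, which asserts that for every sufficiently large prime $p_n$ one has $p_n - p_{n-1} < p_n^{21/40}$. Precisely this exponent controls the hypothesis $k \geq \frac{40}{19}$, since the condition $\frac{k-1}{k} \geq \frac{21}{40}$ is equivalent to $k \geq \frac{40}{19}$; so the exponent $\frac{k-1}{k}$ in the statement is in fact chosen to be as small as the estimate allows.

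To deduce the initial inequality, given an integer $n > C(k)$ I would let $p_{m+1}$ denote the least prime $\geq n$ and $p_m$ its immediate predecessor. Then $n - p_m \leq p_{m+1} - p_m < p_{m+1}^{21/40}$. The same estimate forces $p_{m+1} \leq n + p_{m+1}^{21/40}$, which for $n$ large gives $p_{m+1} = n(1 + \epsilon_n)$ with $\epsilon_n \to 0$, and consequently $p_{m+1}^{21/40} = n^{21/40}(1 + \epsilon_n')$ with $\epsilon_n' \to 0$. Hence $n - p_m < n^{21/40}(1 + \epsilon_n')$. If $k > \frac{40}{19}$, the desired bound $(k - \frac{1}{2}) n^{(k-1)/k}$ dominates automatically, because $n^{(k-1)/k - 21/40} \to \infty$; setting $q := p_m$ then finishes the theorem.

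The main obstacle is the boundary case $k = \frac{40}{19}$, where both sides of the comparison carry exactly the same exponent $\frac{21}{40}$ and the argument becomes a competition of constants. The relevant constant is $k - \frac{1}{2} = \frac{61}{38} > 1$, which comfortably absorbs the $(1 + \epsilon_n')$ factor as soon as $n$ is large enough, so even at the boundary the deduction goes through. Away from the boundary, a one-line comparison of powers of $n$ suffices, and the constant $C(k)$ is obtained explicitly from whichever of the two asymptotic inequalities ($p_{m+1}^{21/40}$ against $(k-\tfrac12)n^{(k-1)/k}$, or the switch from $p_{m+1}$ to $n$) first becomes binding.
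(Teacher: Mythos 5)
Your proposal is correct, and it rests on exactly the same two ingredients as the paper's proof: the Baker--Harman--Pintz bound $p_n - p_{n-1} < p_n^{21/40}$ and the observation that $\frac{k-1}{k} \geq \frac{21}{40}$ precisely when $k \geq \frac{40}{19}$. Where you differ is in how the gap bound is converted into the statement about an arbitrary integer $n$. The paper never compares $p_{m+1}^{21/40}$ with $n^{21/40}$: it first proves an auxiliary version of the theorem stated for primes (Theorem \ref{theorem2et}), shows that version is \emph{equivalent} to the gap inequality $p_n - p_{n-1} < (k-\frac12)p_n^{(k-1)/k}$ (Theorem \ref{theorem4et}), and then transfers from primes to integers by a minimal-counterexample argument (Lemma \ref{lemma1et}) using the inclusion $(p_n - (k-\frac12)p_n^{(k-1)/k}, n_0) \subset (n_0 - (k-\frac12)n_0^{(k-1)/k}, n_0)$, i.e.\ the monotonicity of $x \mapsto x - (k-\frac12)x^{(k-1)/k}$. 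You instead argue directly, locating the primes $p_m < n \leq p_{m+1}$ and estimating $n - p_m \leq p_{m+1} - p_m < p_{m+1}^{21/40}$; this forces you to replace $p_{m+1}^{21/40}$ by $n^{21/40}(1+\epsilon_n')$ and to treat the boundary case $k = \frac{40}{19}$ separately, where the constant $k - \frac12 = \frac{61}{38} > 1$ absorbs the $(1+\epsilon_n')$ factor. Your handling of that case is sound, so the argument closes; the trade-off is that your route is shorter and self-contained but requires this extra asymptotic bookkeeping, while the paper's interval-inclusion route avoids it entirely (the comparison there happens at the single point $p_n$, where $p_n^{21/40} \leq (k-\frac12)p_n^{(k-1)/k}$ holds outright) at the cost of the longer chain of equivalences. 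One small remark: the effectiveness of $C(k)$ in your version depends on making $\epsilon_n'$ explicit, which you gesture at but should pin down (e.g.\ via $p_{m+1} < 2n$, giving $1+\epsilon_n' \leq 2^{21/40}< \frac{61}{38}$, which already suffices for every $k \geq \frac{40}{19}$ without case-splitting).
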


\begin{theorem}\label{theorem2et}
For any real number $k \geq \frac{40}{19}$ there exists a natural constant $C(k)$ such that for each prime $p > C(k)$ there is a prime number $q$ where $p - (k - \frac{1}{2})p^{\frac{k-1}{k}} < q < p$
\end{theorem}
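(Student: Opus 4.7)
My plan mirrors the scaffolding that carried Bertrand's theorem through in the preceding section. First I would reduce Theorem \ref{theorem2et} to a gap inequality: the assertion that for each large prime $p$ some prime $q$ lies in $(p - (k - \frac{1}{2}) p^{(k-1)/k}, p)$ is equivalent to
\begin{equation}
p_n - p_{n-1} < (k - \tfrac{1}{2}) p_n^{\frac{k-1}{k}}
\end{equation}
holding for every pair of consecutive primes $p_{n-1}, p_n$ beyond some threshold. The forward direction takes $p = p_n$ and notes that the largest prime strictly less than $p_n$ is $p_{n-1}$, so any witness $q$ forces $p_{n-1} > p_n - (k - \frac{1}{2}) p_n^{(k-1)/k}$; the converse is the same identity rearranged. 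This is exactly the analogue of Lemma \ref{lemma2bt}.

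Second, I would feed the Baker-Harman-Pintz bound from \cite{Baker:2001aa} into this inequality. That result produces a constant $N_0$ such that $p_n - p_{n-1} < p_n^{21/40}$ for every $p_n > N_0$. It therefore suffices to find $C(k) \geq N_0$ for which
\begin{equation}
p_n^{21/40} \leq (k - \tfrac{1}{2}) p_n^{(k-1)/k}
\end{equation}
holds for every prime $p_n > C(k)$. This plays the role occupied in the Bertrand section by the Dusart estimate $p_{n+1} \leq p_n(1 + 1/\ln^2 p_n)$.

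Third, I would check that the hypothesis $k \geq \frac{40}{19}$ is precisely what this demands. Dividing gives $p_n^{21/40 - (k-1)/k} \leq k - \frac{1}{2}$, and the exponent on the left is nonpositive exactly when $k \geq \frac{40}{19}$. For $k > \frac{40}{19}$ the left side tends to $0$, so the inequality holds for all sufficiently large $p_n$; at the boundary $k = \frac{40}{19}$ the two exponents coincide and the inequality collapses to $1 \leq \frac{61}{38}$, which is trivially true. An explicit $C(k)$ can then be read off just as in Theorem \ref{theorem4bt}.

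The main obstacle is not in the logic of the argument but in the estimate that it imports: the constant $\frac{40}{19}$ is locked to the exponent $\frac{21}{40}$ of the Baker-Harman-Pintz bound, so any improvement of Theorem \ref{theorem2et} below $k = \frac{40}{19}$ by this method would require a uniformly sharper prime-gap result. Within the present framework the proof amounts to substituting \cite{Baker:2001aa} in place of \cite{Dusart:1998aa} and repeating the exponent-comparison carried out for Bertrand's theorem.
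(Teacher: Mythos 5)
Your proposal is correct and follows essentially the same route as the paper: the reduction to the gap inequality $p_n - p_{n-1} < (k - \frac{1}{2})p_n^{\frac{k-1}{k}}$ is the paper's Theorem \ref{theorem4et}, and the verification via the Baker--Harman--Pintz bound $p_n - p_{n-1} < p_n^{21/40}$ together with the exponent comparison $\frac{21}{40} \leq \frac{k-1}{k} \Leftrightarrow k \geq \frac{40}{19}$ is the paper's Theorem \ref{theorem5et}. Your observation that at $k = \frac{40}{19}$ the inequality collapses to $1 \leq \frac{61}{38}$, and that the constant $\frac{40}{19}$ is locked to the exponent $\frac{21}{40}$, matches the paper's treatment exactly.
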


\begin{lemma}\label{lemma1et}
Theorem \ref{theorem1et} is true for all integers $n > C(k)$ if and only if theorem \ref{theorem2et} is true for all primes $p > C(k)$
\end{lemma}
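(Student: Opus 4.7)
The plan is to mirror the strategy used in Lemma \ref{lemma1bt} for Bertrand's theorem, exploiting the idea that a counterexample to the ``initial'' inequality can be pushed onto a nearby prime where it will violate the ``auxiliary'' inequality. The forward implication (Theorem \ref{theorem1et} $\Rightarrow$ Theorem \ref{theorem2et}) is essentially automatic: every prime $p > C(k)$ is in particular an integer $> C(k)$, so applying Theorem \ref{theorem1et} with $n = p$ immediately produces a prime $q \in (p - (k - \tfrac{1}{2}) p^{(k-1)/k}, p)$, which is exactly what Theorem \ref{theorem2et} asserts. Hence the real content of the lemma lies in the reverse implication.

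For the reverse direction I would argue by contradiction. Suppose Theorem \ref{theorem2et} holds, yet some integer $n_0 > C(k)$ has the interval $I_0 := (n_0 - (k - \tfrac{1}{2}) n_0^{(k-1)/k},\, n_0)$ empty of primes. I split on whether $n_0$ is itself prime. If $n_0$ is prime, then applying Theorem \ref{theorem2et} with $p = n_0$ places a prime inside $I_0$, an immediate contradiction. If $n_0$ is composite, let $p_{n-1}$ and $p_n$ be the consecutive primes with $p_{n-1} < n_0 < p_n$; the emptiness of $I_0$ forces $p_{n-1} \leq n_0 - (k - \tfrac{1}{2}) n_0^{(k-1)/k}$. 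The goal is then to show that the auxiliary interval for $p_n$, namely $(p_n - (k - \tfrac{1}{2}) p_n^{(k-1)/k},\, p_n)$, also contains no primes, contradicting Theorem \ref{theorem2et} applied to $p_n$.

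The composite case reduces to verifying the single inequality
\begin{equation*}
p_n - n_0 \;\geq\; (k - \tfrac{1}{2})\bigl(p_n^{(k-1)/k} - n_0^{(k-1)/k}\bigr),
\end{equation*}
because combining it with $p_{n-1} \leq n_0 - (k - \tfrac{1}{2}) n_0^{(k-1)/k}$ delivers $p_{n-1} \leq p_n - (k - \tfrac{1}{2}) p_n^{(k-1)/k}$, i.e.\ no prime lies in the auxiliary interval for $p_n$. This is the main obstacle. I would attack it via monotonicity of the derivative: since $\tfrac{d}{dx} x^{(k-1)/k} = \tfrac{k-1}{k} x^{-1/k}$ is decreasing in $x$, one obtains $p_n^{(k-1)/k} - n_0^{(k-1)/k} \leq \tfrac{k-1}{k} n_0^{-1/k}(p_n - n_0)$, and the required inequality then follows as soon as $(k - \tfrac{1}{2}) \tfrac{k-1}{k} n_0^{-1/k} \leq 1$, which is automatic for all $n_0$ beyond a threshold depending only on $k$.

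The remaining bookkeeping is to absorb this threshold into $C(k)$: taking $C(k)$ to be the maximum of the constant forced by Theorem \ref{theorem2et} and the mean-value threshold produces the single constant appearing in both theorems. What makes me expect the composite case to be the genuine sticking point is that, unlike the Bertrand setting where the bracket $n_0 - 2k$ snaps onto the next prime with essentially no distortion of the auxiliary interval, here the nonlinearity of $x^{(k-1)/k}$ forces honest analytic control of the shift from $n_0$ to $p_n$, and this is precisely what imposes the lower bound on $n_0$.
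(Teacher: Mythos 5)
Your proposal is correct and follows essentially the same route as the paper: reduce the minimal counterexample $n_0$ to its successor prime $p_n$ and show the auxiliary interval at $p_n$ is also prime-free, the crux being the monotonicity of $x \mapsto x - (k-\tfrac{1}{2})x^{(k-1)/k}$. Your mean-value argument actually supplies a justification (and the attendant lower bound on $n_0$ to be absorbed into $C(k)$) for the inequality $n_0 - (k-\tfrac{1}{2})n_0^{(k-1)/k} < p_n - (k-\tfrac{1}{2})p_n^{(k-1)/k}$, which the paper merely asserts without comment.
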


\begin{proof}
Let theorem \ref{theorem1et} be true, then theorem \ref{theorem2et} is true for all primes greater than $C(k)$. 
\newline
\newline
Let theorem \ref{theorem2et} be true for all primes $p \geq p_r$ where $p_{r-1} \leq C(k) < p_r$, but assume theorem \ref{theorem1et} is false for some integers. Let $n_0$ be the minimal integer for which theorem \ref{theorem1et} does not hold. This implies that the interval $(n_0 - (k - \frac{1}{2})n_0^\frac{k-1}{k},n_0)$ contains no prime numbers.
\newline
\newline
Let $p_{n-1},p_n$ be a pair of neighbouring primes such that $p_r \leq p_{n-1} < n_0 < p_n$. The the interval $(p_n - (k - \frac{1}{2})p_n^{\frac{k-1}{k}},p_n)$ does not contain any prime numbers. Indeed $(n_0, p_n)$ doesn't have any prime numbers and $(p_n - (k - \frac{1}{2})p_n^\frac{k-1}{k},n_0)$ has no prime numbers since it is a subset of $(n_0 - (k - \frac{1}{2})n_0^\frac{k-1}{k},n_0)$ since $n_0 - (k - \frac{1}{2})n_0^\frac{k-1}{k} < p_n - (k - \frac{1}{2})p_n^\frac{k-1}{k}$. Thus, this allows us to conclude that the interval $(p_n - (k - \frac{1}{2})p_n^\frac{k-1}{k},p_n)$ contains no prime numbers, a contradiction with theorem \ref{theorem2et} assumed to be true.
\end{proof}

\begin{theorem}\label{theorem3et}
The exponential theorem is true if theorem \ref{theorem1et} is true.
\end{theorem}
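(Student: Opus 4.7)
The plan is to reduce the Exponential theorem to Theorem \ref{theorem1et} by the change of variable $m \leftarrow n^k$. For a large integer $n$, I would set $N = \lfloor n^k \rfloor$ and invoke Theorem \ref{theorem1et} at the integer $N$, obtaining a prime $q$ satisfying
\begin{equation}
N - \left(k - \tfrac{1}{2}\right) N^{\frac{k-1}{k}} < q < N.
\end{equation}
Since $N \leq n^k$, the upper inequality gives $q < n^k$ for free, so the whole matter comes down to showing that the left-hand endpoint of this interval lies above $(n-1)^k$.

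For that lower-bound check, I would use $N \geq n^k - 1$ and $N^{(k-1)/k} \leq n^{k-1}$ (the latter because the exponent is in $(0,1)$) to reduce the task to the elementary inequality
\begin{equation}
n^k - (n-1)^k \;\geq\; \left(k - \tfrac{1}{2}\right) n^{k-1} + 1.
\end{equation}
The mean value theorem applied to $x \mapsto x^k$ on $[n-1,n]$ yields $n^k - (n-1)^k \geq k(n-1)^{k-1}$, and Bernoulli's inequality (legitimate here since $k-1 \geq 21/19 > 1$) gives $(n-1)^{k-1} \geq n^{k-1} - (k-1)n^{k-2}$. Combining these,
\begin{equation}
n^k - (n-1)^k - \left(k - \tfrac{1}{2}\right) n^{k-1} \;\geq\; \tfrac{1}{2} n^{k-1} - k(k-1)n^{k-2},
\end{equation}
which comfortably exceeds $1$ once $n$ passes a threshold of order $k^2$.

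It then remains to choose the constant $C(k)$ in the Exponential theorem as the larger of that Bernoulli-type threshold and any value ensuring $\lfloor n^k \rfloor$ exceeds the constant supplied by Theorem \ref{theorem1et}. I do not anticipate any real obstacle here: the analytic depth is entirely packaged inside Theorem \ref{theorem1et}, and what is left is routine calculus. The only item asking for a moment's care is the floor $N = \lfloor n^k \rfloor$ when $n^k$ is not an integer, which is precisely what the $-1$ slack absorbed by the Bernoulli margin is there to handle.
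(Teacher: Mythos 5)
Your proposal is correct and follows essentially the same route as the paper: substitute $N=\lfloor n^k\rfloor$ into Theorem \ref{theorem1et} and show the resulting interval sits inside $((n-1)^k, n^k)$. The only difference is that the paper merely asserts the containment $([m^k]-(k-\tfrac12)[m^k]^{\frac{k-1}{k}},[m^k])\subset((m-1)^k,m^k)$, whereas you actually verify it via the mean value theorem and Bernoulli's inequality, which is a worthwhile addition rather than a deviation.
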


\begin{proof}
Let theorem \ref{theorem1et} be true, then for any integer $n > C(k)$ there exists a prime number $p$ where $n - (k - \frac{1}{2})n^\frac{k-1}{k} < p < n$. Let $n$ be equal to $n = [m^k]$. Then there is a prime $p \in ([m^k] - (k - \frac{1}{2})[m^k]^\frac{k-1}{k},[m^k])$. Since $([m^k] - (k - \frac{1}{2})[m^k]^\frac{k-1}{k},[m^k]) \subset ( (m-1)^k, m^k)$, then $p \in ( (m-1)^k,m^k)$ and theorem \ref{theorem3et} is true.
\end{proof}

\begin{theorem}\label{theorem4et}
Theorem \ref{theorem2et} is true if and only if there is $C(k)$ such that for every pair neighbouring prime numbers $p_{n-1}, p_n \geq C(k)$ satisfies the following inequality:
\begin{equation} 
p_n - p_{n -1} < (k - 0.5)p_n^\frac{k - 1}{k}
\end{equation}
\end{theorem}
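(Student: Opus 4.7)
The plan is to mirror the argument used in Lemma \ref{lemma2bt}, which established an analogous equivalence in the Bertrand setting. The structure of Theorem \ref{theorem2et} is identical: it asserts the existence of a prime in an interval of the form $(p - (k-\tfrac{1}{2})p^{(k-1)/k}, p)$ attached to the upper endpoint, and any such prime is forced to coincide with $p_{n-1}$ since no primes lie strictly between consecutive primes.

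For the forward implication, I would assume Theorem \ref{theorem2et} holds for all primes exceeding $C(k)$. Given any pair of neighbouring primes $p_{n-1}, p_n \geq C(k)$, apply Theorem \ref{theorem2et} with $p = p_n$ to obtain a prime $q$ with
\begin{equation*}
p_n - (k - \tfrac{1}{2}) p_n^{\frac{k-1}{k}} < q < p_n.
\end{equation*}
Because $p_{n-1}$ and $p_n$ are consecutive, no prime lies strictly in $(p_{n-1}, p_n)$, forcing $q \leq p_{n-1}$. Hence $p_{n-1} > p_n - (k-\tfrac{1}{2})p_n^{(k-1)/k}$, which rearranges to the desired inequality $p_n - p_{n-1} < (k - 0.5)\, p_n^{(k-1)/k}$.

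For the reverse implication, assume the gap inequality for every pair of neighbouring primes above some threshold $C(k)$. Let $p > C(k)$ be prime and write $p = p_n$, with $p_{n-1}$ its predecessor. The hypothesis gives $p_n - p_{n-1} < (k - \tfrac{1}{2}) p_n^{(k-1)/k}$, which is equivalent to
\begin{equation*}
p_{n-1} > p_n - (k - \tfrac{1}{2})\, p_n^{\frac{k-1}{k}}.
\end{equation*}
Taking $q = p_{n-1}$ furnishes a prime in the required interval $\bigl(p - (k-\tfrac{1}{2})p^{(k-1)/k},\, p\bigr)$, establishing Theorem \ref{theorem2et}.

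I do not expect any real obstacle; this is essentially a bookkeeping lemma that re-expresses the statement about a prime in a certain interval as a bound on the gap between consecutive primes. The only mild subtlety is ensuring that when $p_n > C(k)$ the predecessor $p_{n-1}$ is also available in the range where the gap hypothesis applies, but for $p_n$ sufficiently large this is automatic since $p_{n-1}$ is comparable to $p_n$, and the threshold $C(k)$ may be inflated if necessary to absorb this.
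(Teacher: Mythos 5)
Your proposal is correct and follows essentially the same argument as the paper: in both directions the key observation is that the prime $q$ guaranteed (or supplied) in the interval $(p_n - (k-\tfrac{1}{2})p_n^{(k-1)/k},\, p_n)$ can be identified with $p_{n-1}$, which converts the interval statement directly into the gap bound and back. The minor quantifier issue you flag at the end (availability of $p_{n-1}$ in range) is handled no more carefully in the paper's own proof, so nothing further is needed.
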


\begin{proof}
Let theorem \ref{theorem2et} be true for all primes $p_n \geq C(k)$, hence there is a  $q \in (p_n - (k - \frac{1}{2})p_n^\frac{k-1}{k},p_n)$ such that $q \leq p_{n-1} < p_n$ and $p_{n-1}$ also belongs to this interval. Thus:

\begin{equation}
p_n - p_{n-1} < p_n - (p_n - (k - 0.5)p_n^\frac{k-1}{k}) = (k - 0.5)p_n^\frac{k-1}{k}
\end{equation}

\noindent Let $p_n - p_{n-1} < (k - \frac{1}{2})p_n^\frac{k-1}{k}$ be true for any pair of neighbouring primes $p_{n-1},p_n > C(k)$, then

\begin{equation}
p_n - (k - 0.5)p_n^\frac{k-1}{k} < p_{n-1} < p_n
\end{equation}

\noindent and $p_{n-1}$ belongs to $(p_n - (k - \frac{1}{2})p_n^\frac{k-1}{k},p_n)$, therefore the interval contains a prime number.

\end{proof}

\begin{theorem}\label{theorem5et}

There exists an integer $C(k)$ such that for every pair of neighbouring primes $p_{n-1},p_n > C(k)$, the following inequality takes place:

\begin{equation}\label{eqn_original}
p_n - p_{n-1} < (k - 0.5)p_n^\frac{k-1}{k}
\end{equation}

\end{theorem}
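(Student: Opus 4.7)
The plan is to invoke the Baker--Harman--Pintz estimate on prime gaps, which the paper cites as \cite{Baker:2001aa}. That result states that for every sufficiently large $n$ one has
\[
p_n - p_{n-1} < p_n^{0.525} = p_n^{21/40}.
\]
So my first step would be to fix $C_0$ such that this gap bound holds for all $p_n > C_0$.

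Next I would translate the hypothesis $k \geq \frac{40}{19}$ into the two inequalities that are actually needed. Writing $k = \frac{40}{19} + \delta$ with $\delta \geq 0$, a direct computation gives
\[
\frac{k-1}{k} \;=\; 1 - \frac{1}{k} \;\geq\; 1 - \frac{19}{40} \;=\; \frac{21}{40},
\]
and
\[
k - \tfrac{1}{2} \;\geq\; \tfrac{40}{19} - \tfrac{1}{2} \;=\; \tfrac{61}{38} \;>\; 1.
\]
In particular the exponent in the desired inequality is at least $21/40$, and the coefficient $(k-\tfrac12)$ exceeds $1$.

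I would then conclude as follows. For any $p_n > \max(C_0, 1)$ we have by Baker--Harman--Pintz
\[
p_n - p_{n-1} \;<\; p_n^{21/40} \;\leq\; p_n^{(k-1)/k} \;<\; \bigl(k - \tfrac{1}{2}\bigr) p_n^{(k-1)/k},
\]
where the middle inequality uses the fact that $p_n \geq 1$ and $\tfrac{k-1}{k}\geq\tfrac{21}{40}$, and the last inequality uses $k-\tfrac12 > 1$. Taking $C(k) = C_0$ gives the theorem.

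There is no serious obstacle here: the whole argument is an inequality chase. The one mildly delicate point is verifying that the critical value $k = 40/19$ is exactly what is required to match Baker's exponent $0.525 = 21/40$, i.e.\ that the bound is tight at the endpoint. The only quoted black box is the Baker--Harman--Pintz gap estimate itself; everything else is elementary.
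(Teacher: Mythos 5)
Your proposal is correct and follows essentially the same route as the paper: both rest on the Baker--Harman--Pintz bound $p_n - p_{n-1} < p_n^{21/40}$ and the observation that $k \geq \tfrac{40}{19}$ forces $\tfrac{k-1}{k} \geq \tfrac{21}{40}$ and $k - \tfrac12 > 1$. You merely spell out the elementary inequality chase that the paper compresses into the sentence ``inequality is always true for $k \geq \frac{40}{19}$.''
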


\begin{proof}
Using the result from \cite{Baker:2001aa}:

\begin{quote}
``Theorem 1. For all $x > x_0$, the interval $[x - x^{0.525}; x]$ contains prime numbers. With enough effort, the value of $x_0$ could be determined effectively."
\end{quote}

\noindent we can claim that any pair of neighbouring primes $p_n,p_{n-1} > x_0$, which in turn implies that:

\begin{equation}
p_n - p_{n-1} < p_n^\frac{21}{40}
\end{equation}

\noindent However, inequality \ref{eqn_original} is always true for $k \geq \frac{40}{19}$. Therefore $C(k) = p_r$ where $p_{r-1} \leq x_0 < p_r$.

\end{proof}

\begin{theorem}\label{theorem6et}
Theorem \ref{theorem1et} is true for all integers $n > C(k) = p_r$ where  $p_{r-1} \leq x_0 < p_r$.
\end{theorem}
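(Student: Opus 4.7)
The plan is to assemble the already-proved pieces in the correct order, since Theorem \ref{theorem6et} is essentially the terminal node of a chain of reductions. First I would invoke Theorem \ref{theorem5et}, which hands us the uniform gap bound $p_n - p_{n-1} < (k-0.5)\,p_n^{(k-1)/k}$ for every pair of neighbouring primes $p_{n-1}, p_n > x_0$, where $x_0$ is the threshold from the quoted Baker--Harman--Pintz result. Letting $p_r$ denote the least prime strictly greater than $x_0$, this gap bound then holds for all neighbouring pairs with $p_n \geq p_r$.

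Next I would apply the sufficiency direction of Theorem \ref{theorem4et}: the gap bound just established is precisely the condition equivalent to Theorem \ref{theorem2et} holding for all primes $p \geq p_r$, so we obtain Theorem \ref{theorem2et} with $C(k) = p_r$. Finally, Lemma \ref{lemma1et} transfers this prime statement to the integer statement: if Theorem \ref{theorem2et} holds for all primes $p > C(k)$, then Theorem \ref{theorem1et} holds for all integers $n > C(k)$, which is exactly the claim with $C(k) = p_r$.

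The main obstacle is not an obstacle of substance but of bookkeeping: one must verify that no threshold larger than $p_r$ sneaks in along the chain. In particular, the algebraic step inside Theorem \ref{theorem5et} comparing $p_n^{21/40}$ with $(k-0.5)\,p_n^{(k-1)/k}$ must be valid without introducing a further constant. Since $k \geq \tfrac{40}{19}$ forces $(k-1)/k \geq 21/40$ and $k - 0.5 \geq 61/38 > 1$, the comparison holds for all $p_n \geq 1$, so no auxiliary bound is grafted on. Hence the single threshold $C(k) = p_r$ inherited from $x_0$ suffices, and no analytic work beyond what is already quoted is required.
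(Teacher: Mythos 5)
Your proposal is correct and matches the paper's own argument, which likewise derives Theorem \ref{theorem6et} by chaining Theorem \ref{theorem5et} (the gap bound), Theorem \ref{theorem4et} (its equivalence with Theorem \ref{theorem2et}), and Lemma \ref{lemma1et} (the transfer from primes to integers). Your extra check that $k \geq \frac{40}{19}$ makes the exponent comparison hold without introducing a new threshold is a welcome piece of bookkeeping that the paper leaves implicit.
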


\begin{proof}
Theorem \ref{theorem6et} is a consequence of theorems \ref{theorem2et}, \ref{theorem3et}, \ref{theorem4et}, \ref{theorem5et} and lemma \ref{lemma1et}
\end{proof}

\subsubsection{Application of the Exponential Theorem}

\begin{theorem}[exponent 3, A.E. Ingham \cite{Ingham:1937aa}]

For each integer $n > p_r$, where $p_{r-1} \leq x_0 < p_r$ there is a prime number $p$ where:\begin{equation}
(n-1)^3 < p < n^3
\end{equation}

\end{theorem}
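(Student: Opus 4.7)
The plan is to obtain this result as a direct specialization of the Exponential Theorem (Theorem \ref{theorem6et}) at the single value $k = 3$. Since $3 \geq \frac{40}{19}$, the hypothesis of the Exponential Theorem is satisfied, and the constant $C(3)$ coincides exactly with the constant $p_r$ (where $p_{r-1} \leq x_0 < p_r$) appearing in the Ingham statement. So no new analytic ingredient is needed; the task is only to instantiate what has already been proved.

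Concretely, I would proceed in two short steps. First, I would invoke Theorem \ref{theorem6et} at $k=3$ to conclude that for every integer $n > p_r$ there is a prime $q$ in the short interval $(n - \tfrac{5}{2} n^{2/3},\, n)$. Second, I would apply the reduction embodied in Theorem \ref{theorem3et}: given an integer $m$ with $[m^3] > p_r$, set $n = [m^3]$ and let $p$ be the prime produced by the previous step. Then $p$ lies in $([m^3] - \tfrac{5}{2}[m^3]^{2/3},\, [m^3])$, and it remains to observe that this interval is contained in $((m-1)^3, m^3)$, so that $(m-1)^3 < p < m^3$, which is exactly Ingham's conclusion after renaming $m$ back to $n$.

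The only computational point worth checking is the set-theoretic inclusion $([m^3] - \tfrac{5}{2}[m^3]^{2/3},\, [m^3]) \subset ((m-1)^3, m^3)$. This reduces to verifying $(m-1)^3 \leq [m^3] - \tfrac{5}{2}[m^3]^{2/3}$, i.e.\ to comparing the gap $m^3 - (m-1)^3 = 3m^2 - 3m + 1$ with $\tfrac{5}{2}[m^3]^{2/3} \leq \tfrac{5}{2} m^2$. For all sufficiently large $m$ the gap of order $3m^2$ dominates the correction of order $\tfrac{5}{2} m^2$, and the smallness of $x_0$ compared to the threshold forced by Baker--Harman--Pintz ensures that every $m$ with $m^3 > p_r$ is already in this regime. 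Thus the inclusion is automatic for every admissible $m$, and the theorem follows.

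In short, I do not expect any genuine obstacle: the heavy lifting is absorbed into Theorem \ref{theorem5et} (the Baker--Harman--Pintz input giving $p_n - p_{n-1} < p_n^{21/40}$) and into the symbolic manipulation of Lemma \ref{lemma1et} and Theorems \ref{theorem3et}--\ref{theorem4et}. The exponent $3$ is comfortably above the critical value $\frac{40}{19} \approx 2.105$, so the proof reduces to an appeal to the already-established general theorem, with the same explicit constant $C = p_r$.
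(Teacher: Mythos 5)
Your proposal is correct and follows essentially the same route as the paper: the paper likewise obtains Ingham's statement by specializing the Exponential Theorem (via Theorems \ref{theorem5et} and \ref{theorem6et}) to $k=3>\frac{40}{19}$, using the gap bound $p_n-p_{n-1}<2.5\,p_n^{2/3}$ and the same constant $C=p_r$. Your explicit verification of the inclusion $([m^3]-\tfrac{5}{2}[m^3]^{2/3},[m^3])\subset((m-1)^3,m^3)$ merely spells out a step the paper leaves implicit inside Theorem \ref{theorem3et}.
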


\begin{proof}

This theorem is true due to theorems \ref{theorem5et}, \ref{theorem6et} since $k = 3 > \frac{40}{19}$ and for all neighbouring primes $p_{n-1},p_n > p_r$ the following inequality is satisfied:
\begin{equation}
p_n - p_{n-1} < 2.5 p_n^\frac{2}{3}
\end{equation}

\noindent Hence, the exponential theorem is true and for each $n > p_r$ where $p_{r-1} \leq x_0 < p_r$ there is a prime number $p$ with $(n-1)^3 < p < n^3$.
\end{proof}

\noindent Note: we would like to remark that using our approach in the case $p_n - p_{n-1} = O(p_{n-1}^\theta)$ where $\theta = \frac{3}{4} + \epsilon$ is Tchudakoff's constant \cite{Tchudakoff:1936aa} we would not have been able to prove the theorem, however with Ingham's constant $\theta = \frac{5}{8} + \epsilon$ ( \cite{Pintz:2009aa}, \cite{Erdos:1955aa}, \cite{Ingham:1937aa}) we are able to do so.

\begin{theorem}[Quasi - Legendre's theorem]\label{theorem13}
For each integer $n > p_r$ where $p_{r-1} \leq x_0 < p_r$ there is a prime number $p$ such that the following inequality takes place:

\begin{equation}
(n-1)^2 < p^{1-\epsilon} < n^2 \text{ where } \epsilon = 0.05
\end{equation}
\end{theorem}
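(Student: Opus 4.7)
The plan is to recognize this theorem as a direct repackaging of the Exponential Theorem with the borderline exponent $k = \frac{40}{19}$. First I would rewrite the target inequality $(n-1)^2 < p^{1-\epsilon} < n^2$ in the equivalent form
\begin{equation}
(n-1)^{\frac{2}{1-\epsilon}} < p < n^{\frac{2}{1-\epsilon}},
\end{equation}
which is legitimate because the function $x \mapsto x^{1/(1-\epsilon)}$ is strictly increasing on the positive reals. With $\epsilon = 0.05$, one has $1 - \epsilon = \tfrac{19}{20}$, so $\frac{2}{1-\epsilon} = \frac{40}{19}$. Thus the statement to be proved becomes: for each integer $n > p_r$ there exists a prime $p$ with $(n-1)^{40/19} < p < n^{40/19}$.

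Next I would invoke Theorem \ref{theorem6et} (equivalently, the Exponential Theorem statement from Section 2) with the choice $k = \frac{40}{19}$. This value sits exactly at the admissible boundary $k \geq \frac{40}{19}$, so the hypothesis of the exponential theorem is satisfied. Setting $m = n$ in the derivation used for Ingham's exponent-3 theorem, one concludes that for every integer $n > p_r$ (with $p_{r-1} \leq x_0 < p_r$ as in Theorem \ref{theorem5et}) there is a prime $p \in ((n-1)^{40/19}, n^{40/19})$. Reversing the monotone substitution gives $(n-1)^2 < p^{19/20} < n^2$, i.e.\ $(n-1)^2 < p^{1-\epsilon} < n^2$, which is the desired conclusion.

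The only subtle point, and what I would check carefully, is that the exponential theorem is applicable at the exact threshold $k = \frac{40}{19}$ and not merely strictly above it. Tracing back through Theorems \ref{theorem4et} and \ref{theorem5et}, the required inequality $p_n - p_{n-1} < (k - 0.5)p_n^{(k-1)/k}$ reduces at $k = \frac{40}{19}$ to $p_n - p_{n-1} < \frac{21}{19}\,p_n^{21/40}$, which follows from Baker--Harman--Pintz since $\frac{21}{19} > 1$. Hence the boundary case is genuinely covered, and no further estimate of the prime gap is needed beyond what the exponential theorem already provides.
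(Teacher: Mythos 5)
Your proposal is correct and follows essentially the same route as the paper: set $k=\tfrac{40}{19}$ in the exponential theorem (so that $\tfrac{2}{1-\epsilon}=\tfrac{40}{19}$ with $\epsilon=0.05$), verify the prime-gap hypothesis via Baker--Harman--Pintz, and undo the monotone power substitution. One tiny arithmetic slip: at $k=\tfrac{40}{19}$ the coefficient $k-\tfrac12$ equals $\tfrac{61}{38}$, not $\tfrac{21}{19}$ (you appear to have computed $k-1$), but since both exceed $1$ the comparison with $p_n^{21/40}$ goes through unchanged.
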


\begin{proof}
Let us take $k = \frac{40}{19}$, so for any neighbouring pair $p_{n-1},p_n > p_r$ the following is satisfied:

\begin{equation}
p_n - p_{n-1} < \frac{61}{38} p_n^\frac{21}{40}
\end{equation}

\noindent As the exponential theorem is true, for each integer $n > p_r$, where $p_{r-1} \leq x_0 < p_r$ there is a prime number $p$ such that $(n-1)^\frac{40}{19} < p < n^\frac{40}{19}$. This we have $(n-1)^2 < p^{1 - \epsilon} < n^2$ where $\epsilon = 0.05$.
\end{proof}

\begin{lemma}\label{lemma4}
There exists a computable constant $C$ such that as $n$ increases and $n > C$ the number of prime numbers in the interval $((n-1)^3,n^3)$ grows at least as $n^{0.425}$
\end{lemma}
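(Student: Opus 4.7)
The plan is to count the primes in $((n-1)^3, n^3)$ by exploiting Baker--Harman--Pintz type prime-gap bounds already invoked for Theorem \ref{theorem5et}: for $x > x_0$ every interval $[x - x^{0.525}, x]$ contains a prime. Equivalently, consecutive primes $p_{n-1}, p_n$ with $p_n > x_0$ satisfy $p_n - p_{n-1} < p_n^{21/40}$. The key observation is that the interval $((n-1)^3, n^3)$ has length $3n^2 - 3n + 1$, while any sub-interval of it whose right endpoint is $b \leq n^3$ has $b^{0.525} \leq n^{1.575}$; so any sub-interval of length at least $n^{1.575}$ is guaranteed to contain at least one prime, provided $(n-1)^3 > x_0$.

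The counting argument then proceeds as follows. First I would fix $C$ so that $(n-1)^3 > x_0$ whenever $n > C$, and additionally so that the bounds below are not spoiled by lower-order terms. Next, I would set
\begin{equation}
K = K(n) = \left\lfloor \frac{3n^2 - 3n + 1}{n^{1.575}} \right\rfloor,
\end{equation}
and partition $((n-1)^3, n^3)$ into $K$ consecutive sub-intervals, each of length at least $n^{1.575}$ (e.g.\ the first $K-1$ of length exactly $\lceil n^{1.575} \rceil$ and the last absorbing the remainder). By the Baker--Harman--Pintz theorem applied to the right endpoint of each sub-interval, every such sub-interval contains at least one prime, so the number of primes in $((n-1)^3, n^3)$ is at least $K$.

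It then remains to estimate $K$ from below. Since $3n^2 - 3n + 1 = 3n^2(1 - 1/n + O(n^{-2}))$, we obtain
\begin{equation}
K \geq \frac{3n^2 - 3n + 1}{n^{1.575}} - 1 = 3n^{0.425} \bigl(1 + O(n^{-1})\bigr) - 1,
\end{equation}
so for $n$ larger than some computable constant $C$ we have $K \geq n^{0.425}$, which is the stated lower bound on the prime count.

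There is no substantial obstacle: the argument is entirely an averaging/pigeonhole on top of the already-cited $0.525$ exponent for prime gaps, and the only care needed is bookkeeping to ensure (i) $(n-1)^3$ exceeds the ineffective constant $x_0$ of Baker's theorem (so that the partitioning into sub-intervals of length $n^{1.575}$ really forces primes inside each piece), and (ii) the floor function in the definition of $K$ does not erode the leading constant $3$ below $1$. Both can be absorbed by enlarging $C$. If one wanted a cleaner $3n^{0.425}$ constant instead of merely $n^{0.425}$, the same proof delivers it, since the $O(n^{-1})$ correction becomes negligible.
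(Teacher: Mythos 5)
Your proof is correct, but it reaches the conclusion by a more direct route than the paper. The paper first establishes the ``Quasi-Legendre'' theorem (the exponential theorem with $k = \tfrac{40}{19}$, so that every interval $((m-1)^{40/19}, m^{40/19})$ with $m$ large contains a prime) and then counts how many consecutive intervals of that shape fit inside $((n-1)^3, n^3)$: the count is $[n^{57/40}] - [(n-1)^{57/40}] - 1 \approx \tfrac{57}{40} n^{17/40}$, which exceeds $n^{0.425}$. You instead skip the intermediate theorem and pigeonhole directly on the Baker--Harman--Pintz bound, slicing $((n-1)^3, n^3)$ into blocks of length $n^{1.575}$ and noting that each block's right endpoint $b \leq n^3$ satisfies $b^{0.525} \leq n^{1.575}$, so each block traps a prime. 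The two arguments are substantively equivalent --- the Quasi-Legendre intervals sitting inside $((n-1)^3,n^3)$ have length on the order of $\tfrac{40}{19} n^{1.575}$, i.e.\ the same scale as your blocks, and both rest on the same $p_n - p_{n-1} < p_n^{21/40}$ input --- but yours buys a cleaner leading constant ($3n^{0.425}$ versus the paper's $\tfrac{57}{40} n^{0.425}$) and avoids the paper's boundary bookkeeping (its $|\theta| \leq 3$ correction for the partial intervals straddling $(n-1)^3$ and $n^3$), at the cost of redoing the gap-to-interval translation that the paper had already packaged into Theorem \ref{theorem13}. Your own bookkeeping caveats (ensuring $(n-1)^3 > x_0$ and that the floor/ceiling adjustments do not erode the constant below $1$) are the right ones and are indeed absorbed by enlarging $C$.
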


\begin{proof}
According to the Quasi-Legendre's theorem, starting from $C = p_r$ where $p_{r-1} \leq x_0 < p_r$, the intervals $((n - 1)^\frac{40}{19}, n^\frac{40}{19})$ contain at least one prime. Let us estimate the number of such intervals in $((n- 1)^3, n^3)$. The number of such intervals $T$ can be estimated the following way:
\begin{equation}
T = [(n^3)^\frac{19}{40}] - [((n - 1)^3)^\frac{19}{40}] - 1 = [n^\frac{57}{40} - [(n - 1)^\frac{57}{40}] - 1= n^\frac{57}{40} - (n - 1)^\frac{57}{40} + \theta
\end{equation}
\noindent where $|\theta| \leq 3$. $T > n^\frac{17}{40} + \theta = n^{0.425} + \theta$. Since each interval $((n - 1)^\frac{40}{19}, n^\frac{40}{19})$ contain at least one prime, the number of primes increases with $n$ as $n^{0.425}$.
\end{proof}

\subsection{Legendre's Conjecture and Andrica's Conjecture}

\begin{theorem}\label{theoremALC}
There exists a constant $C(L)$ such that for each integer $n > C(L)$ there is a prime number $q$ where $n - 2\sqrt{n} - 1< q < n$.
\end{theorem}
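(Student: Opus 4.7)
The approach is to follow the auxiliary-inequality template that the paper has already used for Bertrand's theorem and the exponential theorem. I would first introduce an \emph{auxiliary} proposition stated in terms of primes: there exists $C(L)$ such that for every prime $p > C(L)$ there is a prime $q$ with $p - 2\sqrt{p} - 1 < q < p$. I would then show this auxiliary proposition implies Theorem \ref{theoremALC}, mirroring Lemma \ref{lemma1bt}. Concretely, if the conclusion of Theorem \ref{theoremALC} failed for some minimal $n_0$, then $(n_0 - 2\sqrt{n_0} - 1,\; n_0)$ would be prime-free; letting $p_{n-1} < n_0 \leq p_n$ and using the fact that $f(x) = x - 2\sqrt{x} - 1$ is strictly increasing for $x > 1$, we get $p_n - 2\sqrt{p_n} - 1 \geq n_0 - 2\sqrt{n_0} - 1$, so the interval $(p_n - 2\sqrt{p_n} - 1,\; p_n)$ is also prime-free, contradicting the auxiliary.

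The second step is the analogue of Lemma \ref{lemma2bt}: to show that the auxiliary proposition is equivalent to the prime-gap inequality
\begin{equation}
p_n - p_{n-1} < 2\sqrt{p_n} + 1
\end{equation}
holding for every pair of consecutive primes with $p_n > C(L)$. One direction notes that the prime $q$ produced by the auxiliary must satisfy $q \leq p_{n-1}$, which forces $p_{n-1} \in (p_n - 2\sqrt{p_n} - 1,\; p_n)$; the converse direction observes that under the gap inequality the prime $p_{n-1}$ itself lies in that interval.

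The final step is to establish the prime-gap inequality, and this is where the main obstacle lies. The inequality $p_n - p_{n-1} < 2\sqrt{p_n} + 1$ is essentially an additive reformulation of Andrica's conjecture and is not currently known unconditionally. The Baker--Harman--Pintz bound $p_n - p_{n-1} < p_n^{0.525}$ that powered the proof of Theorem \ref{theorem5et} is too weak here, because $0.525 > \tfrac{1}{2}$ and hence $p_n^{0.525}$ dominates $2\sqrt{p_n} + 1$ for large $p_n$; no routine sharpening of the exponential-theorem argument can close the gap. For this reason I would present Theorem \ref{theoremALC} as a conditional result: if the prime-gap inequality above holds for all consecutive primes beyond some threshold (equivalently, if Andrica's conjecture holds in the tail), then the two reductions above give the claim. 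This keeps the theorem in line with how Legendre's conjecture and Legendre's and Andrica's conjecture are framed in the main-results section, where the status of the conclusion is explicitly tied to the assumed gap bound.
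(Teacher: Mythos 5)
Your proposal matches the paper's treatment essentially exactly: the auxiliary prime-version you introduce is the paper's Theorem \ref{theoremBLC}, your two reduction steps are Lemmas \ref{lemmaALC} and \ref{lemmaBLC}, and the paper likewise never proves Theorem \ref{theoremALC} unconditionally but leaves it hanging on the gap inequality $p_n - p_{n-1} < 2\sqrt{p_n} + 1$, drawing only conditional conclusions (Theorems \ref{theoremELC} and \ref{leg_conjecture}) from Andrica-type hypotheses. Your observation that the Baker--Harman--Pintz exponent $0.525 > \tfrac{1}{2}$ blocks an unconditional proof correctly identifies why the paper stops where it does.
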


\begin{theorem}\label{theoremBLC}
There exists a constant $C(L)$ such that for each integer $n > C(L)$ there is a prime number $q$ where $p - 2\sqrt{p} - 1< q < p$.
\end{theorem}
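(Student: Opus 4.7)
The plan is to apply the ``initial-versus-auxiliary'' scheme already exercised for Bertrand's theorem and for the exponential theorem. Theorem \ref{theoremALC} plays the role of the initial statement (over integers) and Theorem \ref{theoremBLC} plays the role of its auxiliary counterpart (over primes, reading the inequality with $p$ as the running prime). The strategy has three steps: (i) an implication lemma showing Theorem \ref{theoremBLC} forces Theorem \ref{theoremALC}, (ii) an equivalence lemma re-expressing Theorem \ref{theoremBLC} as a bound on $p_n - p_{n-1}$, and (iii) identifying a source for that bound.

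For step (i), mirroring Lemma \ref{lemma1bt} and Lemma \ref{lemma1et}, I would assume Theorem \ref{theoremALC} fails at some minimal $n_0 > C(L)$ and pick consecutive primes $p_{n-1} < n_0 \leq p_n$. Because $x \mapsto x - 2\sqrt{x} - 1$ is eventually increasing, any prime in $(p_n - 2\sqrt{p_n} - 1,\, p_n)$ would be forced to lie in $(n_0 - 2\sqrt{n_0} - 1,\, n_0) \cup (n_0, p_n)$; both pieces are prime-free by the minimality of $n_0$ and the consecutiveness of $p_{n-1}, p_n$, contradicting Theorem \ref{theoremBLC}. For step (ii), following Lemma \ref{lemma2bt} and Theorem \ref{theorem4et}, Theorem \ref{theoremBLC} is equivalent to
\begin{equation}
p_n - p_{n-1} < 2\sqrt{p_n} + 1
\end{equation}
holding for every pair of consecutive primes $p_{n-1}, p_n > C(L)$; the forward direction uses that any prime $q \in (p_n - 2\sqrt{p_n} - 1, p_n)$ must satisfy $q \leq p_{n-1}$, and the backward direction is immediate by substitution.

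The main obstacle is step (iii). The strongest currently available unconditional prime-gap bound \cite{Baker:2001aa} gives only $p_n - p_{n-1} \ll p_n^{0.525}$, and $p_n^{0.525}$ exceeds $2\sqrt{p_n} + 1$ for every sufficiently large $p_n$; thus the gap inequality above cannot be obtained with present unconditional technology, in sharp contrast to the situation in the exponential theorem where Baker's exponent $0.525$ was already strong enough. What the equivalence \emph{does} establish is that Theorem \ref{theoremBLC} is of precisely the shape that the Legendre and Andrica conjectures, as restated later in Section~2, are designed to supply: under either conjecture the required gap bound holds on the relevant subsequence of primes, and Theorem \ref{theoremBLC} follows via the chain built in steps (i)--(ii). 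I therefore expect the genuine content of the proof to be conditional, presenting Theorem \ref{theoremBLC} as an equivalent reformulation whose resolution is tied to Legendre's (equivalently, Andrica's) conjecture restricted to Legendre primes.
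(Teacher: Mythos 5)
Your proposal matches the paper's treatment: the paper never proves Theorem \ref{theoremBLC} unconditionally, but instead establishes exactly your steps (i) and (ii) as Lemma \ref{lemmaALC} (minimal-counterexample reduction from the integer statement to the prime statement, reading the typo'd hypothesis as ``for each prime $p > C(L)$'') and Lemma \ref{lemmaBLC} (equivalence with the gap bound $p_n - p_{n-1} < 2\sqrt{p_n} + 1$), and then treats the gap bound as a hypothesis supplied conditionally by Andrica's conjecture restricted to Legendre primes. Your diagnosis in step (iii) -- that Baker--Harman--Pintz's exponent $0.525$ is too weak here and the genuine content is conditional -- is precisely how the paper proceeds in Theorems \ref{theoremELC} and \ref{leg_conjecture}.
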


\begin{lemma}\label{lemmaALC}
Theorem \ref{theoremALC} is true for all $n > C(L)$ if and only if theorem \ref{theoremBLC} is true for all prime $p \geq p_r$ where $p_{r-1} \leq C(L) < p_r$.
\end{lemma}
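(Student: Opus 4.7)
The plan is to follow the template of Lemma \ref{lemma1bt} and Lemma \ref{lemma1et}. The forward direction is immediate: if Theorem \ref{theoremALC} holds for every integer $n > C(L)$, then specializing to $n = p$ for any prime $p \geq p_r$ (which automatically satisfies $p > C(L)$, since $p_r > C(L)$) yields a prime $q \in (p - 2\sqrt{p} - 1,\, p)$, which is exactly the conclusion of Theorem \ref{theoremBLC}.

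For the reverse direction, I would argue by minimal counterexample. Assume Theorem \ref{theoremBLC} holds for all primes $\geq p_r$ but Theorem \ref{theoremALC} fails for some integer exceeding $C(L)$, and let $n_0$ be the smallest such integer. By choice of $n_0$, the interval $(n_0 - 2\sqrt{n_0} - 1,\, n_0)$ contains no prime. The strategy is to transfer this prime-free gap to a gap between two consecutive primes, producing a contradiction with Theorem \ref{theoremBLC}.

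If $n_0$ is itself prime, then since $n_0 > C(L) \geq p_{r-1}$ we have $n_0 \geq p_r$, and writing $n_0 = p_n$ the failure of the interval immediately contradicts Theorem \ref{theoremBLC}. Otherwise, let $p_{n-1}, p_n$ be the neighbouring primes with $p_{n-1} < n_0 < p_n$; since $p_n > n_0 > C(L)$, we still have $p_n \geq p_r$. The sub-interval $(n_0, p_n)$ is prime-free by construction, and the remaining sub-interval $(p_n - 2\sqrt{p_n} - 1,\, n_0)$ is contained in $(n_0 - 2\sqrt{n_0} - 1,\, n_0)$, because the function $f(x) = x - 2\sqrt{x} - 1$ has derivative $f'(x) = 1 - 1/\sqrt{x} > 0$ for $x > 1$, so $n_0 < p_n$ gives $n_0 - 2\sqrt{n_0} - 1 < p_n - 2\sqrt{p_n} - 1$. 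Concatenating these two prime-free sub-intervals shows that $(p_n - 2\sqrt{p_n} - 1,\, p_n)$ contains no prime, contradicting Theorem \ref{theoremBLC} applied to $p_n$.

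I do not anticipate a real obstacle here: the argument is structurally identical to the earlier transfer lemmas of the paper, and the only quantitative ingredient is the elementary monotonicity of $x - 2\sqrt{x} - 1$ on $x > 1$, which is precisely what makes the required inclusion of intervals go through.
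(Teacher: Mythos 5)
Your proof is correct and follows essentially the same minimal-counterexample transfer argument as the paper: locate $n_0$ between neighbouring primes $p_{n-1} < n_0 < p_n$, observe $(n_0, p_n)$ is prime-free, and use the monotonicity of $x - 2\sqrt{x} - 1$ to absorb $(p_n - 2\sqrt{p_n} - 1, n_0)$ into the assumed prime-free interval. Your version is in fact slightly more careful than the paper's, since you explicitly dispose of the case where $n_0$ is itself prime, which the paper's phrasing silently assumes away.
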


\begin{proof}
Let theorem \ref{theoremALC} be true for all integer $n > C(L)$ then theorem \ref{theoremBLC} is true for all prime $p \geq p_r$. Let theorem \ref{theoremBLC} be true for all prime $p \geq p_r$ but theorem \ref{theoremALC} is false some integer $n > p_r$. Let $n_0$ be the minimal integer such that an interval $(n_0 - 2\sqrt{n_0} - 1, n_0)$  contains no prime numbers. 
\newline
\newline
Let  $p_{n-1}, p_n$ be two neighbouring primes such that $p_r \leq p_{n-1} < n_0 < p_n$; then the interval $(p_n - 2\sqrt{p_n} - 1, p_n)$ doesn't contain any prime numbers. Indeed the interval $(n_0, p_n)$ does not contain any prime numbers and the interval $(p_n - 2\sqrt{p_n} - 1, n_0) \subset (n_0 - 2\sqrt{n} - 1, n_0)$, also doesn't contain any prime numbers, since $n_0 - 2\sqrt{n_0} < p_n - 2 \sqrt{p_n}$. Thus $(p_n - 2\sqrt{p_n} - 1, p_n)$ does not contain any primes leading to a contradiction. 
\end{proof}

\begin{lemma}\label{lemmaBLC}
Theorem \ref{theoremBLC} is true for all primes $p \geq p_r$ where $p_{r-1} \leq C(L) < p_r$ if and only if every pair of  neighbouring primes $p_{n-1} \text{ and } p_n \geq p_r$ satisfies the following inequality:
\begin{equation} 
p_n - p_{n-1} < 2\sqrt{p_n}  + 1
\end{equation}
\end{lemma}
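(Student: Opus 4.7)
The plan is to mirror the argument used earlier for Lemma \ref{lemma2bt} and Theorem \ref{theorem4et}: the inequality $p_n - p_{n-1} < 2\sqrt{p_n}+1$ is essentially a rewriting of the assertion that the interval $(p_n - 2\sqrt{p_n}-1,\,p_n)$ contains a prime, namely $p_{n-1}$ itself. So I would prove both implications by translating between ``$p_{n-1}$ lies in the stated interval'' and ``the gap is small enough''.

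First I would handle the forward direction. Assume Theorem \ref{theoremBLC} holds for every prime $p \geq p_r$. Fix such a prime $p_n$ and apply the theorem to $p = p_n$: there exists a prime $q$ with $p_n - 2\sqrt{p_n} - 1 < q < p_n$. Since $p_{n-1}$ is by definition the largest prime strictly less than $p_n$, we have $q \leq p_{n-1} < p_n$. Hence $p_{n-1}$ itself lies in the interval $(p_n - 2\sqrt{p_n}-1,\,p_n)$, and rearranging the left inequality $p_n - 2\sqrt{p_n} - 1 < p_{n-1}$ gives the desired bound
\begin{equation}
p_n - p_{n-1} < 2\sqrt{p_n} + 1.
\end{equation}

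For the reverse direction, suppose that every pair of neighbouring primes $p_{n-1}, p_n \geq p_r$ satisfies $p_n - p_{n-1} < 2\sqrt{p_n}+1$. Rewriting this as $p_{n-1} > p_n - 2\sqrt{p_n} - 1$ and combining with $p_{n-1} < p_n$ shows that $p_{n-1}$ itself is a prime lying in the interval $(p_n - 2\sqrt{p_n}-1,\,p_n)$. Taking $q = p_{n-1}$ then witnesses the conclusion of Theorem \ref{theoremBLC} for the prime $p_n$.

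There is no substantive obstacle here; the lemma is a purely formal equivalence, and the only care needed is to note that for any prime $p_n \geq p_r$ the prime $p_{n-1}$ is automatically at least $p_{r-1}$, so the hypothesis ``$p_{n-1}, p_n \geq p_r$'' in the gap formulation and the condition ``$p \geq p_r$'' in Theorem \ref{theoremBLC} describe the same set of prime pairs up to the boundary index (which is exactly what the chosen threshold $p_{r-1} \leq C(L) < p_r$ is arranged to handle, just as in Lemma \ref{lemmaALC}).
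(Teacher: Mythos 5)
Your proof is correct and follows essentially the same route as the paper's: both directions hinge on the observation that any prime $q$ in $(p_n - 2\sqrt{p_n}-1,\,p_n)$ satisfies $q \leq p_{n-1}$, so $p_{n-1}$ itself must lie in that interval, which is equivalent to the gap bound. The paper's own proof is the same two-line translation, so there is nothing to add beyond your (correct) closing remark about the threshold $p_r$.
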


\begin{proof}
Let theorem \ref{theoremBLC} be true for all primes $p_n \geq p_r$,  then for $(p_n - 2\sqrt{p_n} - 1, p_n)$ there is $q$ belonging to the interval where $q \leq p_{n-1} < p_n$ and $p_{n -1}$ also belongs to $(p_n - 2\sqrt{p_n} - 1, p_n)$. Thus:
\begin{equation}
p_n - p_{n -1} < p_n - (p_n - 2\sqrt{p_n} - 1) = 2\sqrt{p_n} + 1
\end{equation}

\noindent Let the inequality $p_n - p_{n-1} < 2\sqrt{p_n} + 1$ be true for $p_n \geq p_r$ then $p_n - 2\sqrt{p_n} - 1 < p_{n -1} < p_n$ and $p_{n -1} \in (p_n - 2\sqrt{p_n} - 1, p_n)$. Thus the interval $(p_n - 2\sqrt{p_n} -1, p_n)$ contains a prime number.
\end{proof}

\begin{theorem}\label{theoremCLC}
Legendre's conjecture is true if theorem \ref{theoremALC} is true.
\end{theorem}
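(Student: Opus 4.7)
The plan is to apply Theorem \ref{theoremALC} at the value $N = n^2$ and then sharpen its conclusion from the interval $(n^2 - 2n - 1,\, n^2)$ to the Legendre interval $((n-1)^2,\, n^2)$ by an elementary divisibility argument. By assumption, Theorem \ref{theoremALC} supplies a constant $C(L)$ such that for every integer $N > C(L)$ the interval $(N - 2\sqrt{N} - 1,\, N)$ contains a prime.

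Given an integer $n$ with $n^2 > C(L)$, I would substitute $N = n^2$ and obtain a prime $q$ with $n^2 - 2n - 1 < q < n^2$. The target Legendre interval $((n-1)^2,\, n^2) = (n^2 - 2n + 1,\, n^2)$ is narrower by exactly two integers, so a priori $q$ might coincide with one of the two potentially offending values $n^2 - 2n$ or $n^2 - 2n + 1$. The key observation is that both are composite once $n$ is large enough: $n^2 - 2n = n(n-2)$ is composite for $n \geq 4$, and $n^2 - 2n + 1 = (n-1)^2$ is composite for every $n \geq 3$. Thus the prime $q$ produced by Theorem \ref{theoremALC} must actually satisfy $q \geq n^2 - 2n + 2 > (n-1)^2$, so $q \in ((n-1)^2,\, n^2)$, which is precisely the conclusion sought in Legendre's conjecture. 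The admissible threshold becomes any $n > \max(\lceil \sqrt{C(L)} \rceil,\, 4)$; any finitely many smaller cases can be dispatched by direct verification.

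The only point where any thought is needed is this small width discrepancy: Theorem \ref{theoremALC} applied at $N = n^2$ produces an interval of length $2n + 1$, while the Legendre interval has length $2n - 1$. The gap is bridged for free by recognising that the two extra integers lying in the wider interval are precisely the nontrivial product $n(n-2)$ and the perfect square $(n-1)^2$, neither of which can be prime for moderate $n$. No deeper analytic input beyond Theorem \ref{theoremALC} itself is required, and the argument mirrors the substitution $n \mapsto n^k$ used in Theorem \ref{theorem3et} for the exponential case, specialized here to $k = 2$ with the extra composite-boundary observation absorbing the loss.
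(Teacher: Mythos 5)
Your proposal is correct and follows essentially the same route as the paper: substitute $n = m^2$ into Theorem \ref{theoremALC} and observe that the prime obtained in $(m^2 - 2m - 1,\, m^2)$ must in fact lie in $((m-1)^2,\, m^2)$. The paper compresses the key boundary step into the phrase ``since $p$ is a prime,'' whereas you make it explicit that the two extra integers $n(n-2)$ and $(n-1)^2$ are composite; this is the same argument, just spelled out more carefully.
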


\begin{proof}
Let theorem \ref{theoremALC} be true, then for any $n > C(L)$ there is a prime $p$ belonging to the interval $(n - 2\sqrt{n} - 1, n)$. Let $n$ be the square, i.e. $n = m^2$. Then there is a prime $p$ in the interval $(m^2 - 2m - 1, m^2)$. Since $p$ is a prime, it belongs to the interval $((m - 1)^2, m^2)$. As $m^2$ is any positive integer greater than $C$, Legendre's conjecture is true.
\end{proof}

\noindent In general it is possible for Legendre's conjecture to be true while theorem \ref{theoremBLC} is false. Let us introduce the so-called Legendre prime numbers. A prime number $p_n$ is a Legendre prime  if $p_n > a^2 > p_{n-1}$ for some integer $a$. $\Pi_L = \{ l_1=2, l_2=5, l_3=11, l_4=17, l_5=29, l_6=37, l_7=53, l_8=67, l_9=83, l_{10}= 101, l_{11}= 127, ...\}$ is the set of Legendre's prime numbers.

\begin{theorem}\label{theoremDLC}
Legendre's conjecture is true if theorem \ref{theoremBLC} is true for Legendre's prime numbers. 
\end{theorem}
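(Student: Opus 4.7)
The plan is to mimic the contradiction structure used for Theorem \ref{theoremCLC} but with the hypothesis available only on Legendre primes. First I would observe that the proof of Lemma \ref{lemmaBLC} depends on $p_n$ only through its being a prime to which Theorem \ref{theoremBLC} applies; restricting that proof to the subclass of Legendre primes yields the equivalent reformulation
\[
p_n - p_{n-1} < 2\sqrt{p_n} + 1 \quad \text{whenever $p_n$ is a Legendre prime exceeding $C(L)$.}
\]
The rest of the argument will work with this gap bound.

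I would then argue by contradiction. Assume the conjecture fails at some large $m$, so that $((m-1)^2, m^2)$ contains no prime. Take $p_n$ to be the smallest prime exceeding $m^2$ (strictly exceeding, since $m^2$ is composite for $m \geq 2$) and $p_{n-1}$ its predecessor. The assumed failure forces $p_{n-1} \leq (m-1)^2$, while $p_{n-1} < m^2 < p_n$ exhibits $p_n$ as a Legendre prime with witness $a = m$, so the gap bound applies. Writing $d := p_n - m^2 \geq 1$, it rearranges to $p_{n-1} > m^2 + d - 2\sqrt{m^2 + d} - 1$. I would next ask for which $d$ this lower bound already exceeds $(m-1)^2$, giving the contradiction outright.

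Squaring the inequality $d + 2m - 2 \geq 2\sqrt{m^2 + d}$ reduces the question to $d^2 + 4d(m - 2) - 8m + 4 \geq 0$, which holds for every $d \geq 3$ as soon as $m \geq 3$; in that range the contradiction is immediate. The hard part will be the edge cases $d = 1$ and $d = 2$, where the coarse squaring bound falls just short and I would have to extract a little extra room by hand. Concretely, for $d = 1$ the sharper estimate $\sqrt{m^2 + 1} < m + \tfrac{1}{2m}$ forces $p_{n-1} \in \{m(m-2),\,(m-1)^2\}$, and for $d = 2$ the estimate $\sqrt{m^2 + 2} < m + \tfrac{1}{m}$ forces $p_{n-1} = (m-1)^2$. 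Since both $m(m-2)$ and $(m-1)^2$ are composite for all $m \geq 4$, no such prime $p_{n-1}$ can exist, closing the contradiction above the resulting cutoff; any finitely many small exceptions can be verified directly, just as in Theorem \ref{theoremCLC}.
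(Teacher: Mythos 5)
Your argument is correct and is essentially the paper's proof: both hinge on the observation that the smallest prime $p_n$ above $m^2$ is a Legendre prime, so the hypothesized gap bound $p_n - p_{n-1} < 2\sqrt{p_n}+1$ forces its predecessor up into $((m-1)^2, m^2)$ once the composite boundary values $m(m-2)$ and $(m-1)^2$ are excluded. Your explicit case analysis on $d = p_n - m^2 \in \{1,2\}$ does by hand what the paper does by enlarging the prime-free interval to $(m^2 - 2m - 1, m^2)$ and invoking the monotonicity of $x - 2\sqrt{x} - 1$.
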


\begin{proof}
Let theorem \ref{theoremBLC} be true for Legendre's prime numbers $l_k \geq l_r \in \Pi_L$ and Legendre's conjecture is false. Let $n_0 > l_r$ be the minimal integer such that an interval $((n_0 - 1)^2, n_0^2)$ doesn't contain any prime numbers. Then the interval $(n_0^2 - 2n_0 - 1, n_0^2)$ doesn't contain any primes. Let us take Legendre's prime number $l_{n_0} > n_0^2$ then the interval $(l_{n_0} - 2\sqrt{l_{n_0}} - 1, l_{n_0})$ doesn't contain any prime numbers. We have the contradiction with the condition of theorem \ref{theoremDLC}, since we assume theorem \ref{theoremDLC} to be true over the set of Legendre primes.
\end{proof}

\noindent Note: Using these results we can give Legendre's conjecture the following algebraic representation:

\begin{corollary}
Let $n$ and $l_n$ belong to the natural numbers and Legendre's prime numbers correspondently. Legendre's conjecture is true if and only if the map $f: n \to l_n$ where $l_{n-1} < n^2 < l_n$ is one-to-one. 
\end{corollary}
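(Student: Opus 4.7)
The plan is to recast the map $f$ in purely prime-theoretic terms and then reduce one-to-oneness to a statement about gaps between consecutive squares, where Legendre's conjecture lives.

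First I would establish the identification $f(n) = $ (smallest prime strictly greater than $n^2$). If $q$ is the smallest prime with $q > n^2$ and $q'$ is the prime immediately preceding $q$, then $q' \leq n^2 < q$, and because $n^2$ is composite for $n \geq 2$ the left inequality is strict, so the square $n^2$ sits strictly between the consecutive primes $q'$ and $q$. Hence $q$ is a Legendre prime, and it is evidently the least Legendre prime exceeding $n^2$, so it coincides with the $l_m$ singled out by the defining condition $l_{m-1} < n^2 < l_m$.

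Next I would record the monotonicity: since $n_1 < n_2$ implies $n_1^2 < n_2^2$, the smallest prime above $n_1^2$ is at most the smallest prime above $n_2^2$, so $f$ is weakly increasing. Injectivity is therefore equivalent to strict monotonicity, i.e.\ to $f(n) < f(n+1)$ for every $n$.

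Finally I would check the equivalence with Legendre's conjecture. If Legendre's conjecture holds, each interval $(n^2,(n+1)^2)$ contains a prime, so $f(n) < (n+1)^2 < f(n+1)$, giving injectivity. Conversely, assume $f$ is injective, so $f(n) < f(n+1)$. Were $f(n) \geq (n+1)^2$, then, since $(n+1)^2$ is composite and no prime lies in $(n^2, f(n))$, the prime $f(n)$ would also be the least prime exceeding $(n+1)^2$, forcing $f(n+1) = f(n)$ and contradicting injectivity. Thus $n^2 < f(n) < (n+1)^2$, producing a prime in every interval between consecutive squares, which is Legendre's conjecture after the reindexing $m = n+1$.

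The whole proof is essentially an unpacking of definitions. The one point that requires care, and which I expect to be the main obstacle, is the first step: the identification of the Legendre prime $l_m$ (defined as the $m$-th term in the enumeration $\Pi_L$) with the smallest prime above $n^2$. Once that identification is secured, the remainder is a short monotonicity argument and is entirely routine.
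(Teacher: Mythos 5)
Your proof is correct and takes essentially the same route as the paper's: identify $f(n)$ with the least prime exceeding $n^2$ (which is automatically a Legendre prime since $n^2$ is composite), and translate injectivity into strict monotonicity of that map. You are in fact more careful than the paper at the two delicate points, namely the identification of $f(n)$ with the smallest prime above $n^2$ and the exclusion of the case $f(n) \geq (n+1)^2$ in the converse direction, both of which the paper's own argument glosses over.
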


\begin{proof}
Let Legendre's conjecture be true, then for each interval $((n - 1)^2, n^2)$ there is a prime number $p \in ((n - 1)^2, n^2)$. Then $p$ is either a  Legendre prime or there is a prime $q \in  ((n - 1)^2, p)$ such that the interval $((n - 1)^2, q)$ doesn't contain any prime numbers and therefore $q$ is  a Legendre prime. Thus we have a map which is strictly increasing so the map is one-to-one. 
\newline
\newline
Let the map $f: n \to l_n$ be one-to-one, then for any $n - 1, n$ there are $p, q$ such that $(n - 1)^2 < p$ and $n^2 < q$ and the intervals $((n -1)^2, p), (n^2, q)$ do not contain any primes. Therefore $p$ belongs to the interval $((n - 1)^2, n^2)$. Thus Legendre's conjecture is true.
\end{proof}

\begin{theorem}\label{theoremELC}
Legendre's conjecture is true if for any pair $p_{n-1}, p_n$ where $p_n$ is Legendre's prime number holds the inequality $p_n - p_{n -1} < 2\sqrt{p_n}  + 1$
\end{theorem}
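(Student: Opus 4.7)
The plan is to chain together Lemma \ref{lemmaBLC} (localised to Legendre primes) with Theorem \ref{theoremDLC}; together these essentially package the whole argument, and Theorem \ref{theoremELC} will be an immediate corollary requiring no new analytic input.

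First I would unpack the ``only if'' direction of Lemma \ref{lemmaBLC} and observe that it is entirely pointwise: for a single neighbouring pair $p_{n-1}, p_n$ satisfying $p_n - p_{n-1} < 2\sqrt{p_n}+1$, one immediately gets $p_{n-1} \in (p_n - 2\sqrt{p_n} - 1,\, p_n)$, so the interval $(p_n - 2\sqrt{p_n} - 1,\, p_n)$ contains a prime (namely $p_{n-1}$ itself). Applying this to every Legendre prime $p_n$ under the hypothesis of Theorem \ref{theoremELC}, I would conclude that the assertion of Theorem \ref{theoremBLC} holds for all $p_n \in \Pi_L$.

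Second, I would invoke Theorem \ref{theoremDLC}, which asserts that Legendre's conjecture follows as soon as Theorem \ref{theoremBLC} is known on the Legendre primes. Combining the two steps closes the argument.

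The main (and really only) obstacle is a bookkeeping check: I must verify that the ``only if'' direction of Lemma \ref{lemmaBLC}, although stated there for arbitrary neighbouring primes, does not quietly require the gap inequality to hold for every pair; the conclusion at a given $p_n$ in fact depends only on the gap inequality at that particular $p_n$, so the restriction to $\Pi_L$ is harmless. A quick rereading of the proof of Lemma \ref{lemmaBLC} confirms this, and no further estimates are needed.
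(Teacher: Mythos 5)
Your proposal is correct and follows essentially the same route as the paper, whose own proof is just a citation of Lemma \ref{lemmaBLC} and Theorem \ref{theoremDLC} (together with the surrounding results); your version is in fact a more careful articulation of which direction of Lemma \ref{lemmaBLC} is used and why its pointwise nature makes the restriction to Legendre primes harmless. The only nitpick is terminological: the implication ``gap inequality $\Rightarrow$ the interval contains a prime'' is the \emph{if} direction of Lemma \ref{lemmaBLC} as stated, not the \emph{only if} direction, but the content you invoke is the right one.
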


\begin{proof}
Proof: Theorem \ref{theoremELC} is true due to theorems \ref{theoremALC}, \ref{theoremBLC}, \ref{theoremCLC}, \ref{theoremDLC} and lemmas \ref{lemmaALC}, \ref{lemmaBLC}. 
\end{proof}

\noindent Note: In 1986 year author of paper \cite{Andrica:1986aa} had given the following conjecture: for all pairs neighbouring primes $p_{n-1}, p_n$ takes place the inequality (Andrica's conjecture):

\begin{equation} 
\sqrt{p_n} - \sqrt{p_{n-1}}  < 1.
\end{equation}

\noindent All empirical evidence up to $10^{16}$ confirms that Andrica's conjecture is true. 

\begin{theorem}[Legendre's conjecture]\label{leg_conjecture}
Let Andrica's conjecture, $\sqrt{p_k} - \sqrt{p_{k-1}} < 1$, hold for each pair of neighbouring primes $p_{k-1}, p_k$ where $p_k$ is a Legendre's prime, then for each integer $n$ there is a prime $p$ such that $(n - 1)^2 < p < n^2$.
\end{theorem}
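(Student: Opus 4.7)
The plan is to invoke Theorem \ref{theoremELC}, which already reduces Legendre's conjecture to the assertion that every pair of neighbouring primes $p_{n-1}, p_n$ with $p_n$ a Legendre prime satisfies $p_n - p_{n-1} < 2\sqrt{p_n} + 1$. So the entire task collapses to an algebraic implication: Andrica's inequality $\sqrt{p_n} - \sqrt{p_{n-1}} < 1$ (assumed here only when $p_n$ is a Legendre prime) implies the gap bound $p_n - p_{n-1} < 2\sqrt{p_n} + 1$ at exactly the same primes.

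The key step is a direct manipulation. Starting from $\sqrt{p_n} - \sqrt{p_{n-1}} < 1$, I would rewrite this as $\sqrt{p_{n-1}} > \sqrt{p_n} - 1$. Since $p_n \geq 2$ implies $\sqrt{p_n} - 1 > 0$ (and in any case we are free to work in the range of Legendre primes, which are unbounded), both sides are nonnegative, so squaring preserves the inequality and yields $p_{n-1} > p_n - 2\sqrt{p_n} + 1$. Rearranging gives
\begin{equation}
p_n - p_{n-1} < 2\sqrt{p_n} - 1 < 2\sqrt{p_n} + 1,
\end{equation}
which is precisely the hypothesis needed to apply Theorem \ref{theoremELC}.

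Because Andrica's conjecture is assumed to hold for every pair $p_{k-1}, p_k$ with $p_k$ Legendre, the derived gap bound also holds for every such pair, so Theorem \ref{theoremELC} applies verbatim and Legendre's conjecture follows. There is no genuine obstacle here: the entire content sits in Theorem \ref{theoremELC}, and the present step is simply the observation that Andrica's bound is formally stronger than the square-root gap bound required there. The only cosmetic point worth noting in the write-up is to justify the squaring, i.e.\ to record that $\sqrt{p_n} > 1$ for all Legendre primes under consideration, so that no spurious sign issue arises.
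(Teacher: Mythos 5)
Your proposal is correct and follows essentially the same route as the paper: both reduce the statement to Theorem \ref{theoremELC} and then verify that Andrica's inequality implies the gap bound $p_n - p_{n-1} < 2\sqrt{p_n}+1$ at Legendre primes. The only difference is cosmetic --- you square $\sqrt{p_{n-1}} > \sqrt{p_n}-1$, while the paper multiplies by the conjugate $\sqrt{p_n}+\sqrt{p_{n-1}}$; both yield the required bound.
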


\begin{proof}
Using Andrica's conjecture we have the following inequality:

\begin{equation}
p_n - p_{n -1} < \sqrt{p_n} + \sqrt{p_{n -1}}  < 2\sqrt{p_n}  + 1
\end{equation}

\noindent Therefore due to theorem \ref{theoremELC} under the assumption that the weak Andrica conjecture is true we can conclude that Legendre's conjecture is true.
\end{proof}

\noindent Note: P. Rainboum in his book ``The Little Book of Bigger Primes" writes: ``... Here is another open problem: to show that $\lim_{n \to \infty}(\sqrt{p_{n+1}} - \sqrt{p_n}) = 0$. If true, this would establish (for $n$ sufficiently large) the conjecture of D.Andrica that $\sqrt{p_{n+1}} - \sqrt{p_n} < 1$ for all  $n \geq 1$. In turn, from this inequality, if true, it would follow that between the squares of any two consecutive integers, there is always a prime. This seems indeed true, but has yet to be proved. Note that this is weaker than Opperman's conjecture..." \cite{Ribenboim:2004aa}
\newline
\newline
Note: Legendre's conjecture can be considered without any connection to Andrica's conjecture since one is only a sufficient condition. However if Legendre's conjecture is false then Andrica's conjecture is also false.

\begin{conjecture}
For each pair of neighbouring primes $p_{n-1}, p_n$ where $p_n$ belongs to a set of Legendre's prime numbers $(\Pi_L)$ then:
\begin{equation}
p_n - p_{n -1} < 2\sqrt{p_n}  + 1.
\end{equation}
\end{conjecture}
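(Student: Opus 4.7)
The plan is to exploit the very definition of a Legendre prime. If $p_n \in \Pi_L$, then by definition there exists a positive integer $a$ with $p_{n-1} < a^2 < p_n$, which suggests the decomposition
\begin{equation}
p_n - p_{n-1} = (p_n - a^2) + (a^2 - p_{n-1}).
\end{equation}
Since $p_n$ is the least prime above $a^2$ and $p_{n-1}$ the greatest prime not exceeding $a^2$, each summand is a local prime-gap quantity around the integer $a^2$. Because $a < \sqrt{p_n}$, the target inequality $p_n - p_{n-1} < 2\sqrt{p_n} + 1$ follows immediately once each summand is shown to be less than $a$; the additive $+1$ comfortably absorbs any boundary rounding (e.g.\ the possibility that $a = \lfloor \sqrt{p_n}\rfloor$).

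First I would reduce the problem to an Oppermann-style local statement: if the intervals $(a^2 - a,\, a^2)$ and $(a^2,\, a^2 + a)$ each contain a prime, then $p_n - a^2 < a$ and $a^2 - p_{n-1} < a$, so $p_n - p_{n-1} < 2a \leq 2\sqrt{p_n} < 2\sqrt{p_n}+1$. This matches the remark following Theorem \ref{leg_conjecture}: the present conjecture sits strictly between Legendre and Oppermann. The natural template is therefore to mimic the auxiliary-inequality machinery used throughout the paper, but applied to windows of radius $\sqrt{p_n}$ centred at a square instead of to windows of radius $p_n^{21/40}$ centred at $p_n$.

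The main obstacle is replacing the Oppermann hypothesis by an unconditional prime-gap bound of the shape $p_{k+1} - p_k < \sqrt{p_k}$ valid near squares. The best unconditional gap estimate, $p_{k+1} - p_k \ll p_k^{0.525}$ from \cite{Baker:2001aa}, is too weak; even the Riemann Hypothesis only furnishes $O(\sqrt{p_k}\log p_k)$, which falls short by a logarithmic factor. This is precisely why the statement is posed as a conjecture rather than a theorem: closing the argument unconditionally appears to demand input of Oppermann strength near each square $a^2$. On the other hand, a conditional proof is essentially automatic under Cramer's hypothesis $p_{k+1} - p_k = O(\log^2 p_k)$, since then the gap is $o(\sqrt{p_n})$ for large Legendre $p_n$, and the finite range below the Andrica verification limit $10^{16}$ is handled by $p_n - p_{n-1} < \sqrt{p_n} + \sqrt{p_{n-1}} < 2\sqrt{p_n}+1$.
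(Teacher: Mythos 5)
The statement you were asked to prove is presented in the paper as a \emph{conjecture}: the author offers no proof of it at all, so there is no argument in the source to compare yours against, and your refusal to claim an unconditional proof is the correct call. What you supply is in fact more than the paper does. The decomposition $p_n - p_{n-1} = (p_n - a^2) + (a^2 - p_{n-1})$ at the witnessing square $a^2$ is sound: if each of $(a^2-a,\,a^2)$ and $(a^2,\,a^2+a)$ contains a prime, then each summand is less than $a < \sqrt{p_n}$ and the bound $p_n - p_{n-1} < 2\sqrt{p_n}+1$ follows. Together with the paper's Theorem \ref{theoremELC} (this gap bound at Legendre primes implies Legendre's conjecture), your reduction correctly places the statement between Legendre's and Oppermann's conjectures, which is precisely the structural role it plays in the paper. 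Your diagnosis of why it must remain a conjecture --- the unconditional gap bound $p_k^{0.525}$ of \cite{Baker:2001aa} is too weak, and even the Riemann Hypothesis gives only $O(\sqrt{p_k}\log p_k)$ --- is accurate.

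One caveat on your closing conditional argument: under Cramer's hypothesis the bound $C\ln^2 p_{k-1} < 2\sqrt{p_n}+1$ holds only for $p_n$ beyond an unspecified threshold depending on the implied constant, so the empirical Andrica verification up to $10^{16}$ does not automatically bridge the finite range; and that verification is computational evidence rather than proof. This does not damage your main point, but the conditional statement should be phrased as ``Cramer implies the conjecture for all sufficiently large Legendre primes'' rather than as a complete conditional proof.
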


\begin{conjecture}[Strong Lengendre conjecture]
For any integer $n$ there exist at least two prime numbers $p, q$ where $(n - 1)^2 < p, q < n^2$.
\end{conjecture}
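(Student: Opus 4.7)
The plan is to mimic, step by step, the architecture used for Legendre's conjecture in Theorem \ref{leg_conjecture}, only with pairs of consecutive gaps in place of single gaps. First I would state a \emph{strong auxiliary integer proposition}: there exists $C$ such that for every integer $n > C$, the interval $(n - 2\sqrt{n} - 1,\, n)$ contains at least two prime numbers. As in the proof of Theorem \ref{theoremCLC}, substituting $n = m^2$ gives the interval $(m^2 - 2m - 1,\, m^2) \subset ((m-1)^2, m^2)$, so the two primes automatically live in $((m-1)^2, m^2)$; this yields the strong Legendre conjecture for all squares beyond a computable bound, with the finitely many remaining $m$ checked by direct calculation.

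Next I would pass from integers to primes by the same minimal-counterexample argument as in Lemma \ref{lemmaALC}: if the strong auxiliary integer proposition fails for some minimal $n_0 > C$, take the prime $p_n$ immediately above $n_0$ and conclude that $(p_n - 2\sqrt{p_n} - 1,\, p_n)$ also contains fewer than two primes. This reduces the task to a \emph{strong auxiliary prime proposition}: for each prime $p_n > C$, the interval $(p_n - 2\sqrt{p_n} - 1,\, p_n)$ contains at least two primes, which are necessarily $p_{n-1}$ and $p_{n-2}$. An analog of Lemma \ref{lemmaBLC} then shows this is equivalent to the double-gap condition
\[
p_n - p_{n-2} < 2\sqrt{p_n} + 1,
\]
and, mirroring Theorem \ref{theoremDLC}, it is enough to verify the condition for Legendre primes $p_n$ only, since the worst-case configuration for squares is produced by the prime that first exceeds $m^2$.

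The final step is to exhibit the double-gap condition. The natural conditional hypothesis is a ``two-step Andrica'' inequality $\sqrt{p_n} - \sqrt{p_{n-2}} < 1$ for every Legendre prime $p_n$: multiplying by $\sqrt{p_n} + \sqrt{p_{n-2}} < 2\sqrt{p_n} + 1$ yields exactly the required bound. A cleaner but slightly stronger hypothesis — that $\sqrt{p_{n+1}} - \sqrt{p_{n-1}} < 1$ for all sufficiently large $n$ — implies the same thing and is a direct strengthening of Andrica's conjecture in the spirit of Ribenboim's remark that $\sqrt{p_{n+1}} - \sqrt{p_n} \to 0$.

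The main obstacle is unconditional: none of the presently known prime-gap estimates is strong enough. Baker–Harman–Pintz only bounds $p_n - p_{n-1}$ by $p_n^{0.525}$, and summing two such gaps is far larger than $\sqrt{p_n}$; even under the Riemann hypothesis the bound $O(\sqrt{p_n}\log p_n)$ still exceeds $2\sqrt{p_n}+1$ by a logarithmic factor. Consequently, within the framework of this paper the strong Legendre conjecture can only be obtained \emph{conditionally} on an Andrica-type hypothesis for two-apart primes, exactly as the ordinary Legendre conjecture was obtained in Theorem \ref{leg_conjecture} from ordinary Andrica; any unconditional resolution would require a genuinely new estimate on $p_n - p_{n-2}$ below the $2\sqrt{p_n}$ threshold.
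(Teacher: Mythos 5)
The statement you are addressing is labelled a \emph{conjecture} in the paper, and the paper gives no proof of it; the only thing it establishes is the theorem immediately following, namely that the strong Legendre conjecture holds if and only if no pair of consecutive Legendre primes $l_n, l_{n+1}$ is a pair of consecutive primes $p_k, p_{k+1}$ in $\Pi$. So there is no proof to match yours against, only a reformulation, and yours is genuinely different. The paper's criterion is combinatorial and exact (necessary and sufficient) but carries no quantitative content; your route re-runs the machinery of Theorems \ref{theoremALC}--\ref{theoremDLC} with ``at least two primes'' in place of ``at least one,'' landing on the double-gap condition $p_n - p_{n-2} < 2\sqrt{p_n} + 1$ and a two-step Andrica hypothesis $\sqrt{p_n} - \sqrt{p_{n-2}} < 1$. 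That reduction is sound and is the natural strong analogue of Theorem \ref{leg_conjecture}: it buys an explicit, numerically testable sufficient condition, at the price of being only sufficient. Your closing assessment is also correct and consistent with the paper's silence: since the best unconditional gap bound is $p_n - p_{n-1} \ll p_n^{0.525}$, far above $\sqrt{p_n}$, no unconditional proof is available by these methods, and the paper does not claim one.

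One step of your outline deserves a warning: the restriction of the double-gap condition to Legendre primes. If $((m-1)^2, m^2)$ contains at most one prime and $l$ is the Legendre prime just above $m^2$, the interval $(l - 2\sqrt{l} - 1,\, l)$ need not sit inside $((m-1)^2, l)$: from $l > m^2$ one gets only $l - (m-1)^2 > 2m - 1$, while $2\sqrt{l} + 1 > 2m + 1$, so the interval can dip below $(m-1)^2$ and pick up primes from the preceding block, spoiling the contradiction. This is the same point that is glossed over in the paper's own Theorem \ref{theoremDLC}; your strong version inherits it rather than worsening it, but if you keep the Legendre-prime restriction you need an extra argument controlling $l - m^2$, and otherwise you should state the two-step Andrica hypothesis for all consecutive triples of primes.
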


\begin{theorem}
The strong Legendre's conjecture is true if and only if for any $n$, $l_n,l_{n+1} \in \Pi_L$ are not $p_k,p_{k+1} \in \Pi$ for any $k$.
\end{theorem}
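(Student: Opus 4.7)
The plan is to prove each direction of the equivalence by contradiction, exploiting the defining property that a prime $p$ is Legendre exactly when some integer square lies strictly between $p$ and its predecessor in $\Pi$.

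For the forward direction, I would assume the strong Legendre conjecture and suppose, for contradiction, that $l_n = p_k$ and $l_{n+1} = p_{k+1}$ for some $n, k$. Since both primes are Legendre, there exist integers $a < b$ with $p_{k-1} < a^2 < p_k$ and $p_k < b^2 < p_{k+1}$. Because $a \leq b - 1$, we get $(b-1)^2 \geq a^2 > p_{k-1}$, so the interval $((b-1)^2, b^2)$ excludes $p_{k-1}$ on the left and $p_{k+1} > b^2$ on the right, leaving $p_k$ as the only prime that could possibly lie inside. Hence $((b-1)^2, b^2)$ contains at most one prime, contradicting the strong Legendre conjecture applied at $n = b$.

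For the reverse direction, I would assume the condition and suppose the strong Legendre conjecture fails, so some interval $((n-1)^2, n^2)$ contains at most one prime. The principal case is when it contains exactly one prime $p$: then $p$ is the first prime exceeding $(n-1)^2$, so $p = l_{n-1}$ is Legendre. Since $n^2$ is not prime and no other prime lies in $((n-1)^2, n^2]$, the prime $q$ following $p$ in $\Pi$ must exceed $n^2$, making $q$ the first prime after $n^2$, hence $q = l_n$ is Legendre. Thus $p$ and $q$ are consecutive primes that are also consecutive in $\Pi_L$, contradicting the condition.

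The main obstacle is the degenerate sub-case where $((n-1)^2, n^2)$ contains zero primes, i.e., ordinary Legendre fails at $n$; there $l_{n-1}$ and $l_n$ would collapse to the same prime bumped into a later square interval $(M^2, (M+1)^2)$, and the contradiction is not immediate unless that receiving interval also carries exactly one prime. Because the set $\Pi_L = \{2, 5, 11, 17, 29, 37, \ldots\}$ in the paper is indexed as distinct primes with $l_n < l_{n+1}$, the ordinary Legendre conjecture is implicitly in force; I would state that assumption explicitly at the start of the reverse direction. Under it, every $((k-1)^2, k^2)$ has at least one prime, the exact-one-prime case above is the only way the strong Legendre conjecture can fail, and the equivalence follows from the two displayed contradictions.
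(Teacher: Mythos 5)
Your argument is correct and takes essentially the same route as the paper's: both directions rest on the fact that a Legendre prime is the first prime after some perfect square, so two consecutive primes that are both Legendre starve a square interval of primes, while a square interval holding exactly one prime produces two consecutive primes that are both Legendre. Your forward direction is actually tighter than the paper's corresponding step, which simply asserts $(n-1)^2 < p < n^2 < q$ for consecutive Legendre primes $p,q$ without locating the squares; your explicit choice of $b$ with $p_k < b^2 < p_{k+1}$ and the bound $(b-1)^2 \geq a^2 > p_{k-1}$ supplies the missing justification. The zero-prime case you flag in the reverse direction is a genuine gap, but it is equally present, and unacknowledged, in the paper's own proof, whose direction ``condition implies strong Legendre'' tacitly assumes that consecutive Legendre primes straddle exactly one square --- i.e., ordinary Legendre's conjecture. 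Stating that hypothesis explicitly, as you propose, is the correct repair; without it the equivalence as literally written is not established by either argument.
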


\begin{proof}
Let $n$ be an integer and $p,q \in \Pi_L$ be a pair of neighbouring primes. Assume that $p,q$ are not a pair $p_{k},p_{k+1} \in \Pi$. Then $(n-1)^2 < p < n^2 < q$ and there is a prime number $r$ such that $p < r < q$, which implies that the strong Legendre conjecture is true.
\newline
\newline
Let the strong Legendre conjecture be true, then for any integer $n$ there exist two primes $r,s$ such that $(n-1)^2 < r < s < n^2$. But there are also two Legendre primes $p,q$ such that $(n-1)^2 < p \leq r < s < n^2 < q$, therefore the pair $p,q$ is not a pair $p_{k},p_{k+1}$ in $\Pi$.
\end{proof}

\subsection{Oppermann's Conjecture and the Modified Andrica's Conjecture}

\begin{conjecture}[Oppermann's Conjecture]
For any integer $n > 1$ there exist two primes $p, q$ where $n^2 < p < n^2 + n$ and $n^2 + n < q < (n + 1)^2$.
\end{conjecture}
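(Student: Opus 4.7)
The plan is to apply the paper's auxiliary-inequality template directly to the conjecture: split the target into its two halves, reduce each half to a statement about consecutive prime gaps, and then seek a gap bound strong enough to close the argument.

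First, I would set up two auxiliary propositions, one for each half. Each takes the form: for every sufficiently large integer $m$, the interval $(m, m + \sqrt{m})$ contains a prime. Applied to $m = n^2$ this yields a prime in $(n^2, n^2 + n)$, and applied to $m = n^2 + n$ it yields a prime in $(n^2 + n, n^2 + 2n + 1) \subset (n^2 + n, (n+1)^2)$. By the same ``auxiliary implies initial'' reduction used in lemmas \ref{lemmaALC} and \ref{lemmaBLC}, each of these propositions is equivalent, modulo a finite range to be verified by direct computation, to a prime-gap condition: for every sufficiently large prime $p_n$ of the relevant shape (just above a square, or just above an integer of the form $n^2 + n$), $p_n - p_{n-1} < \sqrt{p_n}$.

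Second, I would derive the required gap bound in the style of theorem \ref{leg_conjecture}. The factorization $p_n - p_{n-1} = (\sqrt{p_n} - \sqrt{p_{n-1}})(\sqrt{p_n} + \sqrt{p_{n-1}})$ shows that any hypothesis forcing $\sqrt{p_n} - \sqrt{p_{n-1}} \to 0$ automatically delivers $p_n - p_{n-1} = o(\sqrt{p_n})$, which is strictly stronger than what the auxiliary step demands. The modified Andrica hypothesis $\lim_{n \to \infty}(\sqrt{p_{n+1}} - \sqrt{p_n}) = 0$ is precisely such a hypothesis, so adopting it completes the reduction and produces both primes required by the conjecture for all $n$ beyond a computable threshold $C(O)$; the finite range $1 < n \leq C(O)$ is then closed by direct verification.

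The main obstacle is that the conjecture as worded is unconditional, whereas the paper's framework can supply a gap bound as sharp as $p_n - p_{n-1} < \sqrt{p_n}$ only through a hypothesis of the strength of modified Andrica (or the full Andrica conjecture, or a Cramer-type bound). The strongest unconditional short-interval result presently available, Baker--Harman--Pintz's $p_n - p_{n-1} = O(p_n^{0.525})$, exceeds $\sqrt{p_n}$ by a positive exponent and therefore cannot even force a prime into the full interval $((n-1)^2, n^2)$, let alone into each of its two halves separately. Consequently, the proposal as developed here yields a rigorous proof only of the conditional statement ``modified Andrica implies Oppermann'' together with the finite check; reaching the fully unconditional form of the conjecture for every $n > 1$ lies beyond the scope of the present method and, at the time of writing, beyond the reach of any known unconditional bound on consecutive prime gaps.
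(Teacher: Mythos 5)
Your proposal matches the paper's own treatment: the paper states Oppermann's conjecture without proof and immediately proves exactly the conditional, asymptotic version you describe (theorems \ref{theoremAOC}--\ref{theoremEOC}), reducing the two half-intervals to the gap bounds $p_n - p_{n-1} < \sqrt{p_n}$ and $p_m - p_{m-1} < \sqrt{p_{m-1}}$ and deriving these from the modified Andrica hypothesis by the same square-root factorization you use. The only caveat is your closing claim that the range $1 < n \leq C(O)$ is ``closed by direct verification'': the modified Andrica hypothesis is a bare limit statement with no effective rate, so $C(O)$ is not computable from it, and the paper accordingly asserts only the statement for $n > C(O)$ rather than the full conjecture for all $n > 1$.
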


\noindent Although we don't have any instruments for proving Oppermann's conjecture however if we will use modified Andrica's conjecture, $\lim_{n \to \infty}(\sqrt{p_{n+1}} - \sqrt{p_n}) = 0$, then we can prove the following.

\begin{theorem}
There exists such a constant $C(O)$ that for any $n > C(O)$ there are at least two primes $p, q$ such that $n^2 < p < n^2 + n$ and $n^2 + n < q < (n + 1)^2$.
\end{theorem}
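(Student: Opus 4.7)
The plan is to extract from the modified Andrica hypothesis a sufficiently tight bound on prime gaps near $n^2$, and then argue, interval by interval, in exactly the same style as the proof of Legendre's conjecture above (Theorems \ref{theoremALC}--\ref{leg_conjecture}). The only novelty is that we must split the Legendre interval $((n-1)^2, n^2)$ (shifted by one) into its two halves and hit each one separately.

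First I would translate the hypothesis into a usable form. Factoring
\[
p_k - p_{k-1} = (\sqrt{p_k} - \sqrt{p_{k-1}})(\sqrt{p_k} + \sqrt{p_{k-1}})
\]
and using $\lim_{k \to \infty}(\sqrt{p_k} - \sqrt{p_{k-1}}) = 0$, we get: for every $\epsilon > 0$ there exists $N(\epsilon)$ such that whenever $p_k > N(\epsilon)$,
\[
\sqrt{p_k} - \sqrt{p_{k-1}} < \epsilon, \qquad p_k - p_{k-1} < \epsilon\,(\sqrt{p_k} + \sqrt{p_{k-1}}).
\]
I would then fix $\epsilon = \tfrac{1}{3}$ (any constant less than $\tfrac{1}{2}$ will do) and take $C(O)$ to be large enough that every prime above $C(O)$ satisfies this estimate and that the asymptotic inequalities below are valid.

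Next come two parallel contradiction arguments modelled on Lemma \ref{lemmaALC}. Suppose, for some $n > C(O)$, that $(n^2, n^2+n)$ contains no prime, and let $p_{k-1}, p_k$ be the neighbouring primes with $p_{k-1} < n^2 < p_k$. Then $p_k \geq n^2 + n$, so $p_k - p_{k-1} \geq n$. On the other hand $\sqrt{p_{k-1}} < n$ forces $\sqrt{p_k} < n + \epsilon$, and the gap estimate gives $p_k - p_{k-1} < \epsilon(2n + \epsilon) < n$ for $\epsilon = \tfrac{1}{3}$ and $n$ large---a contradiction. The second subinterval is handled identically: if $(n^2 + n, (n+1)^2)$ contains no prime and $p_{k-1} < n^2 + n < p_k$, then $p_k \geq (n+1)^2$ and $p_k - p_{k-1} \geq n+1$, while $\sqrt{p_{k-1}} < \sqrt{n^2 + n} < n + 1$ yields $\sqrt{p_k} < n + 1 + \epsilon$ and hence $p_k - p_{k-1} < \epsilon(2n + 2 + \epsilon) < n + 1$, again a contradiction. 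Taking $C(O)$ to dominate the thresholds from both arguments completes the proof.

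The main caveat---more a loss of explicitness than a genuine obstacle---is that the hypothesis is used only qualitatively, through the mere existence of $N(\epsilon)$; consequently the resulting $C(O)$ is \emph{not} effectively computable from the modified Andrica statement alone, in contrast to the explicit constants that appear elsewhere in the paper via the Dusart and Baker bounds.
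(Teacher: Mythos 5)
Your proposal is correct and follows essentially the same route as the paper: both turn the hypothesis $\lim(\sqrt{p_k}-\sqrt{p_{k-1}})=0$ into an eventual gap bound of the form $p_k - p_{k-1} < c\,\sqrt{p_k}$ with $c<1$ and then derive a contradiction from a prime-free stretch of length at least $n$ straddling $n^2$ or $n^2+n$; you simply collapse the paper's intermediate scaffolding (Theorems \ref{theoremAOC}--\ref{theoremDOC} and Lemmas \ref{lemmaAOC}, \ref{lemmaBOC}, which reduce the integer-interval statement to the prime-gap inequalities $p_n-p_{n-1}<\sqrt{p_n}$ and $p_m-p_{m-1}<\sqrt{p_{m-1}}$) into one direct minimal-gap argument at the two endpoints. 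Your closing caveat about the non-effectiveness of $C(O)$ is accurate and applies equally to the paper's own proof, which likewise uses the limit hypothesis only through the finiteness of the exceptional set.
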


\begin{proof}

\begin{theorem}\label{theoremAOC}
There exists an integer $C(O)$ such that for each integer $n > C(O)$ there are two prime numbers $p, q$ such that $n - \sqrt{n}< p < n$ and $n < q < n + \sqrt{n}$.
\end{theorem}

\begin{theorem}\label{theoremBOC}
There exists an integer $C(O)$ such that for each prime $l > C(O)$ there are two prime numbers $p, q$ such that $l - \sqrt{l}< p < l$ and $l < q < l + \sqrt{l}$.
\end{theorem}

\begin{lemma}\label{lemmaAOC}
Theorem \ref{theoremAOC} is true for all integers $n > C(O)$ if and only if theorem \ref{theoremBOC} is true for all primes $p \geq p_r$ where  $p_{r-1} \leq C(O) < p_r$. 
\end{lemma}

\begin{proof}
Let theorem \ref{theoremAOC} be true for all integers $n > C(O)$ then theorem \ref{theoremBOC} is true for all primes $p \geq p_r$. Let theorem \ref{theoremBOC} be true for all prime $p \geq p_r$ but theorem \ref{theoremAOC} is false for some integer $n > p_r$. 
\begin{itemize}
\item Case 1: Let $n_0$ be the minimal integer such that the interval $(n_0 - \sqrt{n_0}, n_0)$ contains no primes. Let $p_{n-1}, p_n$ be two neighbouring prime numbers where $p_r \leq p_{n-1}  < n_0 < p_n$. Then the interval        $(p_n - \sqrt{p_n}, p_n)$ doesn't contain any prime numbers. Indeed the interval $(n_0, p_n)$ doesn't contain any primes and the interval $(p_n - \sqrt{p_n}, n_0) \subset (n_0 - \sqrt{n_0}, n_0)$ doesn't contain any prime numbers since $n_0 - \sqrt{n_0} < p_n - \sqrt{p_n}$ . Thus $(p_n - \sqrt{p_n}, p_n)$ doesn't contain any prime numbers. 

\item Case 2: let $n_0$ be the minimal integer such that an interval $(n_0, n_0 + \sqrt{n_0})$ doesn't contain any primes. Let $p_{m-1}, p_m$ be two neighbouring prime numbers where $p_r \leq p_{m-1} < n_0 < p_m$, then the interval $(p_{m-1}, p_{m-1}+ \sqrt{p_{m-1}})$ doesn't contain any primes. Indeed the interval $(p_{m-1}, n_0)$ doesn't contain any primes and the interval $(n_0, p_{m-1}+ \sqrt{p_{m-1}}) \subset (n_0, n_0+\sqrt{n_0})$ also contains no primes since $p_{m-1}+ \sqrt{p_{m-1}} < n_0 + \sqrt{n_0}$. Thus $(p_{m-1}, p_{m-1} + \sqrt{p_{m-1}})$ doesn't contain any prime numbers leading to a contradiction with the condition of lemma \ref{lemmaAOC}.
\end{itemize}
\end{proof}

\begin{lemma}\label{lemmaBOC}
Theorem \ref{theoremBOC} is true for all primes $p \geq p_r$ where $p_{r-1} \leq C(O) < p_r$ if and only if any pairs of neighbouring prime numbers $p_{n-1}, p_n, p_{m-1}, p_m \geq p_r$ satisfy the following inequalities

\begin{equation}
p_n - p_{n-1} < \sqrt{p_n}
\end{equation}

\begin{equation}
p_m - p_{m -1} < \sqrt{p_{m-1}}
\end{equation} 
 	
\end{lemma}

\begin{proof}
Let theorem \ref{theoremBOC} be true for all primes $p_n \geq p_r$.

\begin{itemize}

\item Case 1: let $p$ belong to the interval $(p_n - \sqrt{p_n}, p_n)$ since $p \leq p_{n-1} < p_n$ then $p_{n-1}$ also belongs to $(p_n - \sqrt{p_n}, p_n)$. Therefore:
\begin{equation}
p_n - p_{n -1} < p_n - (p_n - \sqrt{p_n}) = \sqrt{p_n}
\end{equation}
Let the inequality $p_n - p_{n-1} < \sqrt{p_n}$ be true for every $p_n \geq p_r$, then $p_n - \sqrt{p_n} < p_{n-1} < p_n$ and  $p_{n-1} \in (p_n - \sqrt{p_n}, p_n)$. Thus the interval ($p_n- \sqrt{p_n}, p_n)$ contains a prime number. 

\item Case 2:  let $q$ belong to the interval $(p_{m-1}, p_{m-1} + \sqrt{p_{m-1}})$ since $p_{m-1} < p_m \leq q$ then $p_m$ also belongs to the interval $(p_{m-1}, p_{m-1} + \sqrt{p_{m-1}})$. Hence, $p_m - p_{m-1} < (p_{m-1} + \sqrt{p_{m-1}}) - p_{m-1} = \sqrt{p_{m-1}}$
\newline
\newline
Let the inequality $p_m - p_{m-1} < \sqrt{p_{m-1}}$ be true for each $p_{m-1} \geq p_r$, then $p_{m-1} < p_m < p_{m-1} + \sqrt{p_{m-1}}$ and $p_m \in (p_{m-1}, p_{m-1} + \sqrt{p_{m-1}})$. Therefore, the interval $(p_{m-1}, p_{m-1} + \sqrt{p_{m -1}})$ contains a prime number.
\end{itemize}

\end{proof}

\begin{theorem}\label{theoremCOC}
Theorem \ref{theoremAOC} is true if for any pair $p_{n-1}, p_n \geq p_r$ where $p_{r-1} \leq C(O) < p_r$ satisfies the inequalities:
\begin{equation}
p_n - p_{n-1} < \sqrt{p_n} 	
\end{equation}
\begin{equation}
p_m - p_{m -1} < \sqrt{p_{m-1}}
\end{equation}
\end{theorem}

\begin{proof}
Theorem \ref{theoremCOC} is true due to theorem \ref{theoremBOC} and lemmas \ref{lemmaAOC}, \ref{lemmaBOC}. 
\end{proof}

\begin{theorem}\label{theoremDOC}
The theorem 21 is true if theorem \ref{theoremAOC} is true.
\end{theorem}

\begin{proof}
Let theorem \ref{theoremAOC} be true then for any $n > C(O)$ there are prime numbers $p, q$ which belong to the intervals $(n - \sqrt{n}, n), (n, n + \sqrt{n})$ correspondingly. Let us take $n = (m+1)^2$ for the first bracket and $n = m^2$ for the second bracket. We then have $m^2 + m < p < (m + 1)^2$ and $m^2< q <m^2 + m$ correspondingly.
\end{proof}

\begin{theorem}\label{theoremEOC}
Let the modified Andrica's conjecture be true then there exists such a constant $C(O)$ that for any pairs of neighbouring primes $p_{n-1}, p_n$ and $p_{m-1}, p_m > C(O)$ the following inequalities hold:
\begin{equation}
p_n - p_{n-1} < \sqrt{p_n}
\end{equation}
\begin{equation}
p_m - p_{m -1} < \sqrt{p_{m-1}}
\end{equation}

\end{theorem}

\begin{proof}
Let $d_{m-1} = p_m - p_{m-1}$, and let $S = \{m \in \mathbb{N} | d_{m-1} > \sqrt{p_{m-1}}\}$ be a conflicting set. Let $\#S = \infty$. However, if we evaluate the expression $d_{m-1} > \sqrt{p_{m-1}}$ we will have:
\begin{equation}
\sqrt{p_m} - \sqrt{p_{m-1}} > \frac{\sqrt{p_{m-1}}}{\sqrt{p_m} + \sqrt{p_{m-1}}} > \frac{1}{\sqrt{2} + 1} \text{ for all $m$ in the set $S$}.
\end{equation}
However, this would be a contradiction of the modified Andrica's conjecture, which we assumed to be true (as it would violate Weierstrass' conditions about subsequences), leading us to conclude that $\#S$ was finite.
\end{proof}

\noindent This in turn implies that theorem 21 is true due to theorems \ref{theoremDOC} and \ref{theoremEOC}.
\end{proof}

\begin{corollary}
For any integer $n$ there is a prime $p$ with $n^2 < p < n^2 + n$ if and only if the interval $(n^2, n^2 + n)$ contains Legendre prime number $l_n$.
\end{corollary}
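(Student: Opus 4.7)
The plan is to prove the two directions separately and extract everything directly from the definition of a Legendre prime as the first prime immediately following a perfect square. Recall from the earlier corollary that $l_n$ denotes the (unique) smallest Legendre prime exceeding $n^2$, i.e.\ the prime characterized by $l_{n-1} < n^2 < l_n$ in the Legendre enumeration.

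For the easy direction ($\Leftarrow$), if $l_n \in (n^2, n^2+n)$ then, since $l_n$ is by definition a prime, the interval trivially contains a prime. There is nothing to do here.

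For the forward direction ($\Rightarrow$), I would suppose that $(n^2, n^2+n)$ contains some prime $p$ and let $q$ be the smallest prime strictly greater than $n^2$. Then $n^2 < q \leq p < n^2+n$, so $q$ also lies in $(n^2, n^2+n)$. Let $q'$ be the prime immediately preceding $q$; by minimality of $q$ we have $q' \leq n^2 < q$. Taking $a = n$ in the definition of a Legendre prime gives $q > n^2 > q'$, so $q$ is itself a Legendre prime. Any Legendre prime exceeding $n^2$ is a fortiori a prime exceeding $n^2$, so $l_n \geq q$; conversely $l_n$ is minimal among Legendre primes greater than $n^2$, and $q$ is one such, so $l_n \leq q$. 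Hence $l_n = q \in (n^2, n^2+n)$, as desired.

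There is no real obstacle here: the statement is essentially a tautology once one observes that the smallest prime after $n^2$ is automatically a Legendre prime. The only thing to be careful about is matching the indexing convention $l_{n-1} < n^2 < l_n$ from the preceding corollary to the Legendre prime one produces from the interval, which is handled by the minimality argument above.
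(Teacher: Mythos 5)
The paper states this corollary with no proof at all, so there is nothing to compare your argument against; judging it on its own terms, it is essentially correct, and the key observation --- that the least prime $q$ exceeding $n^2$ is automatically a Legendre prime --- is exactly the right one. Two small points deserve attention. First, the definition of a Legendre prime requires the strict inequalities $p_n > a^2 > p_{n-1}$, so after writing $q' \leq n^2 < q$ you should add that $n^2$ is composite for $n \geq 2$ (and the case $n=1$ is vacuous, since $(1,2)$ contains no integer), whence $q' < n^2 < q$ and $a = n$ genuinely witnesses the definition. Second, and more substantively: the identity you invoke, namely that $l_n$ is the least Legendre prime exceeding $n^2$ (equivalently $l_{n-1} < n^2 < l_n$), is not the paper's definition of $l_n$ --- the paper defines $l_n$ as the $n$-th element of the enumeration $\Pi_L = \{2, 5, 11, 17, \dots\}$, and the earlier corollary shows that the correspondence $l_{n-1} < n^2 < l_n$ holding for all $n$ is \emph{equivalent} to Legendre's conjecture. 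Under the literal enumeration reading the ``only if'' direction can fail: if some interval $((m-1)^2, m^2)$ with $m \leq n$ contained no prime, the enumeration would slip by one, the $n$-th Legendre prime would be the first prime after $(n+1)^2$ or later and hence would lie outside $(n^2, n^2+n)$, even if that interval did contain a prime. This is arguably a defect of the corollary's statement rather than of your proof, but you should make your reading of $l_n$ an explicit hypothesis (or say that the equivalence is asserted modulo the correspondence $l_{n-1} < n^2 < l_n$) rather than folding it silently into the minimality step.
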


\begin{corollary}
The estimate of the difference between consecutive Legendre prime numbers $l_{n-1}, l_n$ has the following form:
$n < l_{n} - l_{n-1} < 3n - 1$
\end{corollary}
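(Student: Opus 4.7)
The plan is to read the corollary as a direct repackaging of the Oppermann-type theorem proved just above (the one conditional on the modified Andrica conjecture) into gap form. I will apply that theorem at two consecutive indices: at $n-1$ it produces a prime in $((n-1)^2,(n-1)^2+(n-1))$, and at $n$ it produces a prime in $(n^2,n^2+n)$. Since $l_{n-1}$ is by definition the least prime exceeding $(n-1)^2$, the first prime is an upper bound for $l_{n-1}$; similarly the second prime upper-bounds $l_n$.

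First I would extract the integer pinchings
\begin{equation*}
(n-1)^2+1 \le l_{n-1} \le n^2-n-1, \qquad n^2+1 \le l_n \le n^2+n-1.
\end{equation*}
Subtracting the extremes in the favourable direction yields the lower bound
\begin{equation*}
l_n - l_{n-1} \ge (n^2+1)-(n^2-n-1) = n+2 > n,
\end{equation*}
and in the unfavourable direction yields the upper bound
\begin{equation*}
l_n - l_{n-1} \le (n^2+n-1)-(n^2-2n+2) = 3n-3 < 3n-1.
\end{equation*}
The inequalities are in fact strict with room to spare ($n+2$ and $3n-3$), so the stated form $n < l_n - l_{n-1} < 3n-1$ is a slightly weakened but convenient clean statement.

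There is no genuine technical obstacle; the main point to flag is the scope. The argument is conditional on the hypothesis of the preceding theorem (the modified Andrica conjecture) and only applies once both indices $n-1$ and $n$ exceed the threshold $C(O)$. For the finitely many smaller $n$, the inequalities can be verified directly against the initial segment of Legendre primes listed after the definition, which is where the minor loosening from $(n+2,3n-3)$ to $(n,3n-1)$ is convenient, since it gives a uniformly valid statement once the finitely many boundary cases are absorbed.
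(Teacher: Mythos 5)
Your derivation is correct and is essentially the one the paper intends: the corollary is stated without proof immediately after the conditional Oppermann-type theorem, and reading off $l_{n-1}\in((n-1)^2,(n-1)^2+(n-1))$ and $l_n\in(n^2,n^2+n)$ and subtracting endpoints is exactly the intended route (the bare interval endpoints give $n$ and $3n-1$ directly; your integer pinching sharpens this to $n+2\le l_n-l_{n-1}\le 3n-3$ above the threshold). You are also right to flag that the statement is conditional on the hypothesis of the preceding theorem and on $n$ exceeding $C(O)$, a caveat the paper itself omits.
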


\subsection{Diophantine inequality and Cramer's Conjecture}

``So what Cramer seems to be suggesting, on probabilistic grounds, is that the largest gap between consecutive primes $x$ is $\log^2x$; more precisely,

\begin{equation}
\max_{p_{n-1} < x} (p_n - p_{n-1}) \approx \log^2 x. 				
\end{equation}

\noindent This statement (or the weaker $O(\log^2x)$) is known as `Cramer's Conjecture'." \cite{Granville:1995aa}. Cramer's conjecture has also  gotten some experimental support and the authors of works \cite{Heath-Brown:2005aa}, \cite{Crandall:2005aa}, \cite{Granville:2009aa} believe one can hope that Cramer's conjecture is true.
\newline
\newline
Our goal to show that if Cramer's conjecture is true then the following theorem is true.  

\begin{theorem}[Conditional Theorem]\label{conditional_thm}
For any real $\epsilon$ where $0 < \epsilon \leq 1$ there exists a constant $C(\epsilon)$ such that for each integer $n > C(\epsilon)$ there is a prime number $p$ where $(n -1)^{1+\epsilon} < p < n^{1+\epsilon}$.
\end{theorem}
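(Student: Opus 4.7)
The plan is to re-apply the four-step scaffolding from the Exponential Theorem of Section 3.2 verbatim, with $k := 1+\epsilon$, replacing only the unconditional Baker bound $p_n - p_{n-1} \ll p_n^{21/40}$ by Cramer's conditional bound $p_n - p_{n-1} = O(\ln^2 p_n)$.

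First I would introduce the two auxiliary propositions in analogy with Theorems \ref{theorem1et} and \ref{theorem2et}: (A) for every integer $n > C(\epsilon)$ there is a prime $q$ in $(n - (\tfrac{1}{2}+\epsilon)\, n^{\epsilon/(1+\epsilon)},\, n)$, and (B) the same statement with $n$ restricted to primes. The equivalence (A)~$\Leftrightarrow$~(B) is established exactly as in Lemma \ref{lemma1et}, and the implication (A)~$\Rightarrow$~(target conclusion) follows the proof of Theorem \ref{theorem3et} with the substitution $n = [m^{1+\epsilon}]$; one only needs to check that
\begin{equation}
m^{1+\epsilon} - (m-1)^{1+\epsilon} > 1 + (\tfrac{1}{2}+\epsilon)\, m^{\epsilon}
\end{equation}
for all sufficiently large $m$, which is immediate since $m^{1+\epsilon} - (m-1)^{1+\epsilon} = (1+\epsilon)\, m^{\epsilon} + O(m^{\epsilon - 1})$ and $1+\epsilon > \tfrac{1}{2}+\epsilon$.

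Next, Theorem \ref{theorem4et} transfers unchanged to show that (B) is equivalent to the gap condition
\begin{equation}
p_n - p_{n-1} < (\tfrac{1}{2}+\epsilon)\, p_n^{\epsilon/(1+\epsilon)}
\end{equation}
holding for every pair of neighbouring primes $p_{n-1}, p_n \geq C(\epsilon)$. It then suffices to derive this gap inequality from Cramer's conjecture. By hypothesis there is an absolute constant $A$ with $p_n - p_{n-1} \leq A \ln^2 p_n$ for all sufficiently large $n$; setting $\delta := \epsilon/(1+\epsilon) > 0$, the elementary fact $\ln^2 x = o(x^{\delta})$ yields
\begin{equation}
A \ln^2 p_n < (\tfrac{1}{2}+\epsilon)\, p_n^{\delta}
\end{equation}
for all $p_n$ exceeding some threshold depending only on $\epsilon$, and this threshold defines $C(\epsilon)$.

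I expect the main ``obstacle'' to be essentially bookkeeping rather than a genuine difficulty: the entire framework of Section 3.2 transfers, and the only substantive input is the observation that $\ln^2 x$ is dominated by every positive power of $x$. The one point worth flagging is that the constant $C(\epsilon)$ must blow up as $\epsilon \to 0^+$, since $\delta = \epsilon/(1+\epsilon) \to 0$ and the implicit constant in $A \ln^2 x < (\tfrac{1}{2}+\epsilon)\, x^{\delta}$ deteriorates accordingly; this is consistent with the statement, which only claims a constant for each fixed $\epsilon$. It is also essential that Cramer's bound is invoked throughout, because the unconditional Theorem \ref{theorem5et} is confined to $k \geq 40/19$ and therefore provides no information in the range $0 < \epsilon \leq 1$ (i.e.\ $1 < 1+\epsilon \leq 2$) covered by the present theorem.
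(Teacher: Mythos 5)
Your proposal is correct and follows essentially the same route as the paper: the paper's own proof introduces exactly the auxiliary interval $(n - (\tfrac{1}{2}+\epsilon)n^{\epsilon/(1+\epsilon)},\, n)$, the integer/prime equivalence, the reduction to the gap condition $p_n - p_{n-1} < (\tfrac{1}{2}+\epsilon)p_n^{\epsilon/(1+\epsilon)}$, and the final appeal to $\ln^2 x = o(x^{\epsilon/(1+\epsilon)})$ under Cramer's bound. Your explicit verification of $m^{1+\epsilon} - (m-1)^{1+\epsilon} > 1 + (\tfrac{1}{2}+\epsilon)m^{\epsilon}$ is a small point of added care that the paper leaves implicit.
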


\begin{proof}

\begin{theorem}\label{theoremADCC}
For any real $\epsilon$, $0 < \epsilon \leq 1$ there exists a constant $C(\epsilon)$ such that for each integer $n > C(\epsilon)$ there is a prime number $q$ such that $n - (0.5 + \epsilon) n^\frac{\epsilon}{1+  \epsilon} < q < n$.
\end{theorem}

\begin{theorem}\label{theoremBDCC}
For any real $\epsilon$, $0 < \epsilon \leq 1$ there exists a constant $C(\epsilon)$ such that for each prime $p > C(\epsilon)$ there is a prime number $q$ such that $p - (0.5 + \epsilon) p^\frac{\epsilon}{1+  \epsilon} < q < p$.
\end{theorem}

\begin{lemma}\label{lemmaADCC}
Theorem \ref{theoremADCC} is true for all integers $n > C(\epsilon)$ if and only if theorem \ref{theoremBDCC} is true for all primes $p > C(\epsilon)$.
\end{lemma}

\begin{proof}
Let theorem \ref{theoremADCC} be true then theorem \ref{theoremBDCC} is also true for all primes $p > C(\epsilon)$. Let theorem \ref{theoremBDCC} be true for all primes $p \geq p_r$ where $p_{r-1} \leq C(\epsilon) < p_r$ but theorem \ref{theoremADCC} is false for some integers. 
\newline
\newline
Let $n_0$ be the minimal rational integer such that the interval $(n_0 - (\frac{1}{2} + \epsilon)n_0^\frac{\epsilon}{1+ \epsilon}, n_0)$ doesn't contain any prime numbers. Let $p_{n-1}, p_n$ be a pair neighbouring primes and $p_r \leq p_{n-1} < n_0 < p_n$ then the interval $(p_n - (\frac{1}{2} + \epsilon) p_n^\frac{\epsilon}{1+ \epsilon}, p_n)$ doesn't contain any prime numbers. 
\newline
\newline
Indeed the interval $(n_0, p_n)$ doesn't contain any prime numbers and $(p_n - (\frac{1}{2} + \epsilon) p_n^\frac{\epsilon}{1+ \epsilon}, n_0) \subset (n_0 - (\frac{1}{2} + \epsilon)n_0^\frac{\epsilon}{1+ \epsilon}, n_0)$ where         $n_0 - (\frac{1}{2} + \epsilon)n_0^\frac{\epsilon}{1+ \epsilon} < p_n - (\frac{1}{2} + \epsilon)p_n^\frac{\epsilon}{1+ \epsilon}$ doesn't contain any prime numbers. Thus $(p_n - (\frac{1}{2} + \epsilon) p_n^\frac{\epsilon}{1+ \epsilon}, p_n)$ doesn't contain any prime numbers. We have a contradiction.
\end{proof}

\begin{theorem}
Let theorem \ref{theoremADCC} be true then the Conditional Theorem is true.
\end{theorem}
 
\begin{proof} 
Let theorem \ref{theoremADCC} be true then for any integer $n > C(\epsilon)$ there is a prime number $p$ with $n - (\frac{1}{2} +\epsilon)n^\frac{\epsilon}{1+  \epsilon} < p < n$. Let an integer $n$ be equal to $n = [m^{1+  \epsilon}]$ where $m^{1+  \epsilon} - 1 < n \leq m^{1+  \epsilon}$. Then there is a prime $q$ belonging to $([m^{1+ \epsilon}] - (\frac{1}{2} + \epsilon)[m^{1+ \epsilon}]^\frac{\epsilon}{1+ \epsilon}, [m^{1+ \epsilon}])$. Since $([m^{1+ \epsilon}] - (\frac{1}{2} + \epsilon) [m^{1+ \epsilon}]^\frac{\epsilon}{1+  \epsilon}, [m^{1+ \epsilon}]) \subset ((m - 1)^{1+\epsilon}, m^{1+\epsilon})$ so $q \in ((m-1)^{1+\epsilon}, m^{1+\epsilon})$. Thus theorem 30 is true.
\end{proof}
 
 \begin{theorem}\label{theoremDDCC} 
Theorem \ref{theoremBDCC} is true if and only if there exists a constant $C(\epsilon)$ such that every pair of neighbouring prime numbers $p_{n-1}, p_n \geq C(\epsilon)$ satisfies the following inequality
\begin{equation}
p_n - p_{n -1} < (0.5 + \epsilon) p_n^\frac{\epsilon}{1+\epsilon}
\end{equation}
\end{theorem} 

\begin{proof} 
Let theorem \ref{theoremBDCC} be true for all primes $p_n \geq C(\epsilon)$, so for the interval $(p_n - (\frac{1}{2} + \epsilon) p_n^\frac{\epsilon}{1+\epsilon}, p_n)$ there is $q$ belonging to this interval and $q \leq p_{n-1} < p_n$ and $p_{n-1}$ also belongs to this interval. Thus:
\begin{equation} 
p_n - p_{n -1} < p_n - (p_n - (0.5 + \epsilon) p_n^\frac{\epsilon}{1+\epsilon}) = (0.5 + \epsilon) p_n^\frac{\epsilon}{1+\epsilon}.
\end{equation}
Let $p_n - p_{n-1} < (\frac{1}{2} + \epsilon) p_n^\frac{\epsilon}{1+\epsilon}$ be true for any pair neighbouring primes $p_{n-1}, p_n > C(\epsilon)$ then $p_n - (\frac{1}{2} + \epsilon) p_n^\frac{\epsilon}{1+\epsilon} < p_{n-1} < p_n$  and $p_{n-1}$ belongs to $(p_n - (\frac{1}{2} + \epsilon) p_n^\frac{\epsilon}{1+\epsilon}, p_n)$. Thus this interval contains a prime number.
\end{proof}


\begin{theorem}\label{theoremEDCC} 
For any real $\epsilon$ where $0 < \epsilon \leq 1$ there exists an integer $C(\epsilon)$ such that for each for every pair of neighbouring primes $p_{n-1}, p_n > C(\epsilon)$ the following inequality takes place:
\begin{equation}
p_n - p_{n -1} < (0.5 + \epsilon) p_n^\frac{\epsilon}{1+\epsilon}.
\end{equation}
\end{theorem}

\begin{proof}
Let Cramer's conjecture \cite{Cramer:1936aa} be true then there exists such a constant $C$ that for all prime numbers $p_n, p_{n -1}$ the following inequality holds: $p_n - p_{n -1} < C \ln^2 p_{n-1}$. Furthermore there exists such $n_0$ that for all $n > n_0 + 1$ the following inequality takes place:
\begin{equation}
p_n - p_{n -1} < C \ln^2 p_{n-1} < (0.5+ \epsilon) p_{n-1}^\frac{\epsilon}{1+\epsilon} < (0.5 + \epsilon) p_n^\frac{\epsilon}{1+\epsilon}.
\end{equation}
\noindent Thus we can take $C(\epsilon)$ as equal to $p_{n_0}$. 
\end{proof}
\noindent Therefore, the conditional theorem is true for all integers $n > C(\epsilon) = p_{n_0}$ due to theorems \ref{theoremADCC}, 30.
\end{proof}

\begin{conjecture}
For any real $\epsilon$ where $0 < \epsilon \leq 1$ there exists such an integer $C(\epsilon)$ that for each integer $n > C(\epsilon)$ there is a prime number $p$ with $(n -1)^{1+ \epsilon} < p < n^{1+  \epsilon}$.
\end{conjecture}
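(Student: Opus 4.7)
The plan is to follow exactly the pattern the paper has used for its other exponential-type statements (Theorems \ref{theorem1et}, \ref{conditional_thm}, etc.): reduce the problem on integers to a problem on primes, then reduce the problem on primes to an inequality on consecutive prime gaps, and finally attempt to establish that gap inequality. Concretely, I would first state the auxiliary claim that for every real $\epsilon \in (0,1]$ there is a constant $C(\epsilon)$ such that for every integer $n > C(\epsilon)$ there is a prime $q \in \bigl(n - (\tfrac{1}{2}+\epsilon) n^{\epsilon/(1+\epsilon)},\,n\bigr)$, and its companion statement for primes $p$ in place of $n$. The integer-to-prime reduction is carried out by the standard minimal-counterexample argument: if $n_0$ is the smallest integer for which the inequality fails, pick the unique consecutive primes $p_{n-1} < n_0 < p_n$ and use monotonicity of the map $x \mapsto x - (\tfrac{1}{2}+\epsilon)x^{\epsilon/(1+\epsilon)}$ to transport the empty interval $(n_0 - \ldots, n_0)$ into an empty interval $(p_n - \ldots, p_n)$, contradicting the prime version. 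This is verbatim Lemma \ref{lemmaADCC}, so nothing new is needed here.

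Next I would reformulate the prime version as a pure inequality on gaps, following Theorem \ref{theoremDDCC}: it suffices to prove that for all sufficiently large consecutive primes,
\begin{equation}
p_n - p_{n-1} < \left(\tfrac{1}{2}+\epsilon\right) p_n^{\epsilon/(1+\epsilon)}.
\end{equation}
Once this gap bound is in hand, the "scaling" step $n = \lfloor m^{1+\epsilon}\rfloor$ (exactly as in the proof of Theorem \ref{conditional_thm}) converts the auxiliary interval into $((m-1)^{1+\epsilon}, m^{1+\epsilon})$, delivering the conjecture. So the entire proof compresses to: establish the above gap bound unconditionally for every $\epsilon \in (0,1]$.

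The main obstacle is precisely this last step, and it is severe. The best known unconditional bound is Baker--Harman--Pintz's $p_n - p_{n-1} \ll p_n^{0.525}$, which the paper has already exploited. For our target inequality, $0.525$ would have to be no larger than $\epsilon/(1+\epsilon)$, i.e. $\epsilon \geq 21/19$; this fails even for $\epsilon = 1$ (Legendre's interval). Hence the $\epsilon \in (0,1]$ range lies entirely outside reach of current exponent-type gap results, and in the limit $\epsilon \to 0^{+}$ the required bound $p_n - p_{n-1} = o(p_n^{\delta})$ for every $\delta > 0$ is strictly stronger than anything presently provable; it is, in fact, only slightly weaker than Cramér's conjecture itself. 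The honest conclusion is that this is exactly why the statement appears in the paper as a \emph{conjecture} rather than a theorem: the reduction machinery is complete, but the input it needs — a polynomially small unconditional gap bound of exponent approaching $0$ — is not available.

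A reasonable intermediate result, which I would write up as partial progress, is therefore the following: for each fixed $\epsilon \geq 21/19$, the conjecture holds with $C(\epsilon) = p_r$ where $p_{r-1} \leq x_0 < p_r$, by directly invoking Baker--Harman--Pintz inside Theorem \ref{theoremDDCC}. Any further progress toward the full range $\epsilon \in (0,1]$ would have to come from a genuinely new bound on prime gaps, and in that sense this conjecture is equivalent (via the framework already developed) to a quantitative strengthening of the Baker--Harman--Pintz exponent toward $0$.
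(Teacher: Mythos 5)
Your assessment is accurate: the paper itself leaves this statement as a conjecture and only establishes the corresponding result conditionally on Cram\'er's conjecture (Theorem \ref{conditional_thm}), via exactly the reduction chain you describe (Lemma \ref{lemmaADCC}, Theorem \ref{theoremDDCC}, Theorem \ref{theoremEDCC}). Your identification of the missing ingredient --- an unconditional gap bound $p_n - p_{n-1} < \left(\tfrac{1}{2}+\epsilon\right) p_n^{\epsilon/(1+\epsilon)}$, which for $\epsilon \leq 1$ lies strictly beyond the Baker--Harman--Pintz exponent $0.525$ --- is precisely why the statement remains a conjecture, and your partial result for $\epsilon \geq 21/19$ is the paper's own Exponential Theorem in disguise; the only quibble is that the bound needed as $\epsilon \to 0^{+}$ (gaps $O(p_n^{\delta})$ for every $\delta>0$) is considerably weaker than Cram\'er's $O(\ln^2 p_n)$, not ``only slightly weaker.''
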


\noindent Note: Given conjecture can consider without any connection to Cramer's conjecture since one is only a sufficient condition. However if given conjecture is false even though for one value $\epsilon$ then Cramer's conjecture also is false.

\begin{theorem}
The modified Andrica's conjecture, $\lim_{n \to \infty}(\sqrt{p_{n+1}} - \sqrt{p_n})=0$ is true if the weak Cramer conjecture, $p_n- p_{n-1} = O(\ln^2p_{n-1})$, is true.
\end{theorem}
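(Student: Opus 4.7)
The plan is to reduce the limit to a ratio whose numerator is controlled by weak Cramer and whose denominator grows like $\sqrt{p_n}$, and then observe that $\ln^2 x / \sqrt{x} \to 0$. Concretely, I would start from the algebraic identity
\begin{equation}
\sqrt{p_{n+1}} - \sqrt{p_n} = \frac{p_{n+1} - p_n}{\sqrt{p_{n+1}} + \sqrt{p_n}},
\end{equation}
which holds for all $n$ and converts a difference of square roots into the usual gap $d_n = p_{n+1} - p_n$ divided by a sum of square roots.

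Next I would invoke the hypothesis. Weak Cramer gives a constant $C_0$ and an index $n_1$ such that $d_n \leq C_0 \ln^2 p_n$ for every $n \geq n_1$. Inserting this into the identity above and bounding the denominator below by $2\sqrt{p_n}$ (since $p_{n+1} > p_n$) yields
\begin{equation}
0 < \sqrt{p_{n+1}} - \sqrt{p_n} \leq \frac{C_0 \ln^2 p_n}{2 \sqrt{p_n}} \qquad (n \geq n_1).
\end{equation}

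To finish, I would apply the elementary limit $\lim_{x \to \infty} \ln^2 x / \sqrt{x} = 0$, which is a standard consequence of L'Hôpital's rule or of the fact that $\ln x = o(x^{1/4})$. Since $p_n \to \infty$, the upper bound in the displayed inequality tends to $0$, and the squeeze theorem gives $\lim_{n \to \infty}(\sqrt{p_{n+1}} - \sqrt{p_n}) = 0$, which is precisely the modified Andrica's conjecture.

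There is essentially no obstacle here: the only non-trivial input is the rationalisation trick, and everything else is bookkeeping together with the asymptotic $\ln^2 x = o(\sqrt{x})$. In particular, the argument also shows the quantitatively stronger statement $\sqrt{p_{n+1}} - \sqrt{p_n} = O(\ln^2 p_n / \sqrt{p_n})$, which is much more than convergence to zero; this same quantitative form is what is needed when one wants to turn modified Andrica into consequences such as Theorem~\ref{theoremEOC}.
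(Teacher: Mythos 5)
Your proposal is correct and follows essentially the same route as the paper: rationalise the difference of square roots, bound the numerator by the weak Cramer hypothesis, and use $\ln^2 x = o(\sqrt{x})$. In fact your lower bound $\sqrt{p_{n+1}}+\sqrt{p_n} > 2\sqrt{p_n}$ gives the correct majorant $C_0\ln^2 p_n/(2\sqrt{p_n})$, whereas the paper's displayed bound $C\ln^2 p_n/p_n$ overstates the denominator (a harmless slip, since both tend to zero).
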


\begin{proof}
According to the weak Cramer's conjecture there exists such a constant C that for all prime numbers $p_n, p_{n -1}$, the following inequality takes place:
\begin{equation}
p_n - p_{n -1} < C\ln^2 p_{n-1}
\end{equation}
Furthermore, we get the following inequality:
\begin{equation}
\sqrt{p_n} - \sqrt{p_{n-1}}  < C\frac{\ln^2 p_{n-1}}{\sqrt{p_n} + \sqrt{p_{n -1}}}  < C\frac{\ln^2 p_n}{p_n}
\end{equation} 
and $\lim_{n \to \infty} \sqrt{p_{n+1}} - \sqrt{p_n} = 0$. Thus modified Andrica's conjecture is true.
\end{proof}

\noindent Note: Given theorem can consider without any connection to Cramer's conjecture since one is only a sufficient condition. However if modified Andrica's conjecture is false then Cramer's conjecture also is false.

\subsection{Fractional theorem}

\begin{theorem}[Fractional theorem]\label{theoremFT}
For any real $k \geq 2$ there exists a computable constant $C(k)$ such that for each integer $n>C(k)$ there are two primes $p, q$ such that $\frac{k - 1}{k}n< p, q<\frac{k}{k - 1}n$. 
\end{theorem}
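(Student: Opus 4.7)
The plan is to follow the same four-step reduction pattern used for Theorems \ref{theorem1bt} and \ref{theorem1et}: introduce an auxiliary inequality stated over primes, show it implies the integer claim, convert it to a consecutive-prime gap inequality, and then verify that gap bound from Dusart's estimate.

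The key geometric observation is that the open interval $(\frac{k-1}{k}n, \frac{k}{k-1}n)$ contains the integer $n$ in its interior and splits at $n$ into a left half $(\frac{k-1}{k}n, n)$ of length $\frac{n}{k}$ and a right half $(n, \frac{k}{k-1}n)$ of length $\frac{n}{k-1}$. Producing one prime in each half immediately yields two distinct primes in the full interval, so the theorem reduces to two independent one-prime claims, which can be handled symmetrically.

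For the left half I would introduce the auxiliary statement: for every prime $p > C_A(k)$ there is a prime $q$ with $\frac{k-1}{k}p < q < p$. In the style of Lemma \ref{lemma1bt}, if the integer version failed at a minimal $n_0$, then taking $p_m$ to be the smallest prime $\geq n_0$ forces $p_{m-1} \leq \frac{k-1}{k}n_0 \leq \frac{k-1}{k}p_m$, contradicting the auxiliary applied to $p_m$. As in Lemma \ref{lemma2bt}, this auxiliary is equivalent to the gap inequality $p_m - p_{m-1} < \frac{p_m}{k}$. An analogous auxiliary on the right half --- for every prime $p > C_B(k)$ there is a prime $r$ with $p < r < \frac{k}{k-1}p$ --- reduces symmetrically to the gap inequality $p_m - p_{m-1} < \frac{p_{m-1}}{k-1}$.

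Finally, both gap inequalities follow from Dusart's estimate $p_m - p_{m-1} < \frac{p_m}{\ln^2 p_m}$ already used in Theorem \ref{theorem4bt}. Once $p_{m-1}$ is large enough that $\ln^2 p_{m-1} > k$, i.e.\ $p_{m-1} > e^{\sqrt{k}}$, Dusart's bound lies strictly below both $\frac{p_m}{k}$ and $\frac{p_{m-1}}{k-1}$ (using, for the right-hand side, the trivial comparison $p_m < 2p_{m-1}$ coming from the same Dusart estimate). Together with the $p_{465}$ threshold from Dusart's theorem, this supplies an explicit computable $C(k)$. I do not anticipate a serious obstacle; the only delicate case is $k = 2$, where the left endpoint $\frac{k-1}{k}n$ coincides with the Bertrand threshold $n/2$, but Dusart's bound is vastly smaller than $p_m/2$ for large $p_m$, so the uniform argument covers it without modification.
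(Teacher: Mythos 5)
Your proposal is correct and follows essentially the same route as the paper: the paper also splits the interval at $n$ into the two halves $(\frac{k-1}{k}n, n)$ and $(n, \frac{k}{k-1}n)$, reduces each to an auxiliary statement over primes, converts those to the gap inequalities $p_n - p_{n-1} < \frac{p_n}{k}$ and $p_n - p_{n-1} < \frac{p_{n-1}}{k-1}$, and verifies both from Dusart's estimate with the same threshold $p_{r-1} < \exp(\sqrt{k}) < p_r$ combined with $p_{465}$.
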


\begin{theorem}\label{theoremAFT}
For any real $k \geq 2$ there exists a computable constant $C(k)$ such that for each integer $n > C(k)$ there is a prime number $p$ where $n < p < \frac{k}{k - 1}n$.
\end{theorem}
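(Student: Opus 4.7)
The plan is to follow the four-step template already used for Bertrand's theorem (Theorems \ref{theorem1bt}--\ref{theorem5bt}) and the Exponential theorem (Theorems \ref{theorem1et}--\ref{theorem6et}): introduce a ``prime-version'' auxiliary, establish equivalence between the integer and prime versions, reformulate the prime version as an inequality on consecutive prime gaps, and verify that inequality using a known gap bound.

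First I would state the auxiliary: for any real $k \geq 2$ there exists $C(k)$ such that every prime $p > C(k)$ admits a prime $q$ with $p < q < \frac{k}{k-1}p$. The direction ``integer version implies prime version'' is trivial, since primes are integers. For the reverse direction, suppose Theorem \ref{theoremAFT} fails and let $n_0 > C(k)$ be the minimal counterexample. Let $p$ be the largest prime satisfying $p \leq n_0$ and let $q$ be the next prime. Whether $n_0$ is prime or composite, one has $q \geq \frac{k}{k-1}n_0 \geq \frac{k}{k-1}p$, so the interval $(p, \frac{k}{k-1}p)$ contains no prime, contradicting the auxiliary.

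Next, in a lemma parallel to Lemma \ref{lemma2bt} and Theorem \ref{theorem4et}, I would show that the auxiliary is equivalent to the gap inequality
\begin{equation}
p_n - p_{n-1} < \frac{p_{n-1}}{k - 1}
\end{equation}
holding for all consecutive primes $p_{n-1}, p_n > C(k)$: a prime $q$ in $(p_{n-1}, \frac{k}{k-1}p_{n-1})$ can be taken to be the successor of $p_{n-1}$, and conversely, if $p_n - p_{n-1} < p_{n-1}/(k-1)$ then $p_n < \frac{k}{k-1}p_{n-1}$, so $p_n$ itself lies in the required interval.

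The final step is verifying this gap inequality using Dusart's bound $p_n - p_{n-1} \leq \frac{p_{n-1}}{\ln^2 p_{n-1}}$, valid for $n > 463$ and already invoked in the proof of Theorem \ref{theorem4bt}. Since $\frac{p_{n-1}}{\ln^2 p_{n-1}} < \frac{p_{n-1}}{k - 1}$ precisely when $\ln^2 p_{n-1} > k - 1$, the required gap inequality holds once $p_{n-1} > e^{\sqrt{k-1}}$, so one may take $C(k) = p_r$ where $p_r$ is the smallest prime exceeding $\max\{p_{463}, e^{\sqrt{k-1}}\}$. There is really no substantive obstacle here: the interval $(n, \frac{k}{k-1}n)$ has length $n/(k-1)$, linear in $n$, whereas Dusart's bound delivers sub-linear gaps. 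The boundary case $k = 2$ simply recovers Bertrand's postulate, since $\ln^2 p > 1$ already for $p \geq 3$.
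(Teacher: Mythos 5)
Your proposal is correct and follows essentially the same route as the paper: the paper proves Theorem \ref{theoremAFT} through exactly this chain (the prime-version auxiliary Theorem \ref{theoremBFT}, the equivalence Lemma \ref{lemmaAFT}, the gap reformulation $p_n - p_{n-1} < \frac{p_{n-1}}{k-1}$ in Theorems \ref{theoremCFT}--\ref{theoremDFT}, and verification via Dusart's bound in Theorem \ref{theoremFFT}). The only differences are cosmetic: you treat the prime/composite cases of $n_0$ explicitly, and your threshold $e^{\sqrt{k-1}}$ is marginally sharper than the paper's $e^{\sqrt{k}}$, which the paper needs only because it verifies the stronger two-sided bound $\min(\frac{p_{n-1}}{k-1},\frac{p_n}{k})$ for the full fractional theorem.
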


\begin{theorem}\label{theoremAprimeFT}
For any real $k \geq 2$ there exists a computable constant $C(k)$ such that for each integer $n > C(k)$ there is a prime number $p$ where $\frac{k - 1}{k}n < p < n$.
\end{theorem}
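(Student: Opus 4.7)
The plan is to mirror the four-step template already used for Bertrand's theorem (Theorems \ref{theorem1bt}--\ref{theorem5bt}): introduce a prime-indexed auxiliary proposition, show the auxiliary implies the desired integer statement, translate the auxiliary into a consecutive-prime-gap inequality, and finally discharge the gap inequality using Dusart's effective estimate cited in Theorem \ref{theorem4bt}.

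First I would state the auxiliary: for every real $k \geq 2$ there exists a computable $C(k)$ such that for each prime $p > C(k)$ the interval $\left(\tfrac{k-1}{k}p,\, p\right)$ contains a prime. Then, exactly as in Lemma \ref{lemma1bt}, I would argue that the auxiliary implies Theorem \ref{theoremAprimeFT} by a minimal-counterexample transfer. If the integer statement failed at a minimal $n_0 > C(k)$, then $\left(\tfrac{k-1}{k}n_0,\, n_0\right)$ is prime-free. Pick consecutive primes with $p_{m-1} \leq \tfrac{k-1}{k}n_0$ and $p_m \geq n_0$, splitting off the sub-case $n_0 = p_m$ just as in the Bertrand proof. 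Since $p_m \geq n_0$ implies $\tfrac{k-1}{k}p_m \geq \tfrac{k-1}{k}n_0$, the interval $\left(\tfrac{k-1}{k}p_m,\, p_m\right)$ lies inside the prime-free window $\left(\tfrac{k-1}{k}n_0,\, p_m\right)$, contradicting the auxiliary.

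Next, following Lemma \ref{lemma2bt}, I would show the auxiliary is equivalent to the gap inequality
\[
p_n - p_{n-1} < \frac{p_n}{k}
\]
for every pair of consecutive primes $p_{n-1}, p_n > C(k)$. The forward direction uses that any prime in $\left(\tfrac{k-1}{k}p_n,\, p_n\right)$ must equal $p_{n-1}$; the converse is an algebraic rearrangement placing $p_{n-1}$ inside that interval.

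Finally I would discharge the gap inequality using Dusart's bound $p_n - p_{n-1} \leq p_{n-1}/\ln^2 p_{n-1} < p_n/\ln^2 p_{n-1}$, valid for $n > 463$. It then suffices to secure $\ln^2 p_{n-1} > k$, i.e.\ $p_{n-1} > e^{\sqrt{k}}$, and one simply takes $C(k) = \max\{p_{465},\, p_r\}$ with $p_{r-1} \leq e^{\sqrt{k}} < p_r$, in direct analogy with the closing step of the proof of Theorem \ref{theorem4bt}. The only obstacle here is bookkeeping the threshold $C(k)$ so that both the Dusart hypothesis and $\ln^2 p_{n-1} > k$ simultaneously hold; because $k$ is a fixed constant while $\ln^2 p_{n-1} \to \infty$, no genuine analytic difficulty arises, and the entire argument goes through in a nearly word-for-word parallel with the Bertrand proof.
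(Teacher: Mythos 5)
Your proposal is correct and follows essentially the same route as the paper: the auxiliary prime-indexed statement is the paper's Theorem \ref{theoremBprimeFT}, the minimal-counterexample transfer is Lemma \ref{lemmaAprimeFT}, the equivalence with the gap inequality $p_n - p_{n-1} < \frac{p_n}{k}$ is Theorem \ref{theoremCprimeFT}, and the final discharge via Dusart's bound with $C(k) = \max(p_r, p_{465})$, $p_{r-1} < e^{\sqrt{k}} < p_r$, is Theorem \ref{theoremFFT}. The only (harmless) difference is that you explicitly split off the case $n_0 = p_m$, which the paper's lemma glosses over.
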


\begin{theorem}\label{theoremBFT}
For any real $k \geq 2$ there exists a computable constant $C(k)$ such that for each prime $n > C(k)$ there is a prime number $p$ where $n < p < \frac{k}{k - 1}n$. 
\end{theorem}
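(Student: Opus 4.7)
The plan is to follow the auxiliary--inequality strategy used throughout the paper (Bertrand, Exponential, Legendre, Cramer): reduce Theorem~\ref{theoremBFT} to a gap estimate on consecutive primes and then quote the Dusart bound already employed in Theorem~\ref{theorem4bt}. Since the hypothesis is that $n$ is itself prime, write $n = p_r$; the natural candidate for the desired prime $p$ is simply $p_{r+1}$, which automatically satisfies $p_{r+1} > p_r = n$. What remains is the upper bound $p_{r+1} < \frac{k}{k-1}\,p_r$, i.e.,
\[
p_{r+1} - p_r < \frac{p_r}{k-1}.
\]

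First I would invoke the same Dusart estimate used earlier in the paper: for $r > 463$, $p_{r+1} \leq p_r\bigl(1 + 1/\ln^2 p_r\bigr)$, so $p_{r+1} - p_r \leq p_r/\ln^2 p_r$. Then I would notice that $p_r/\ln^2 p_r < p_r/(k-1)$ is equivalent to the elementary inequality $\ln^2 p_r > k-1$, i.e., $p_r > \exp(\sqrt{k-1})$. Let $s$ be the least index with $p_s > \exp(\sqrt{k-1})$; then taking $C(k) := \max\bigl(p_{465},\, p_s\bigr)$ finishes the proof, since for every prime $n = p_r > C(k)$ both of the required inequalities hold simultaneously, so $p_{r+1}$ lies in the open interval $\bigl(n,\tfrac{k}{k-1}n\bigr)$.

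I do not expect any substantive obstacle here. The interval $\bigl(n,\tfrac{k}{k-1}n\bigr)$ has length $n/(k-1)$, which is of linear order in $n$, while Dusart's consecutive--prime gap near $n$ is only of order $n/\ln^2 n$; the former dominates the latter by a logarithmic factor as soon as $n$ exceeds an elementary threshold depending on $k$. The only point that requires a line of care is specifying $C(k)$ explicitly, so that the constant is genuinely \emph{computable} as the paper's statements insist. This is in sharp contrast with the Exponential theorem, where the gap estimate $p_n^{0.525}$ of Baker--Harman--Pintz is essentially tight against the target $(k-\tfrac12)p_n^{(k-1)/k}$ and forces the lower bound $k\geq 40/19$; here, the target exponent is $1$, so the Dusart bound leaves an enormous margin and the proof is purely bookkeeping.
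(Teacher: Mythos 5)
Your proof is correct and follows essentially the same route as the paper: the paper reduces Theorem~\ref{theoremBFT} to the gap inequality $p_n - p_{n-1} < \frac{p_{n-1}}{k-1}$ (its Theorem~\ref{theoremCFT}) and then verifies that inequality via Dusart's bound $p_{n+1} \leq p_n(1 + 1/\ln^2 p_n)$ with a threshold of the form $\exp(\sqrt{k})$ (its Theorem~\ref{theoremFFT}), which is exactly your argument with the next prime $p_{r+1}$ as the witness. The only cosmetic difference is that the paper works with $\min(\frac{p_{n-1}}{k-1}, \frac{p_n}{k})$ so as to cover the companion Theorem~\ref{theoremBprimeFT} at the same time, yielding the threshold $\exp(\sqrt{k})$ rather than your slightly sharper $\exp(\sqrt{k-1})$.
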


\begin{theorem}\label{theoremBprimeFT}
For any real $k \geq 2$ there exists a computable constant $C(k)$ such that for each prime $n > C(k)$ there is a prime number $p$ where $\frac{k - 1}{k}n < p < n$.
\end{theorem}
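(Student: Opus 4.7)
The plan is to follow the reduction-to-prime-gaps strategy established throughout this section. Since $n$ is prime, write $n = p_m$. Any prime $p < n$ satisfies $p \leq p_{m-1}$, so the interval $\left(\tfrac{k-1}{k}n,\,n\right)$ contains a prime \emph{if and only if} $p_{m-1}$ itself lies in it, i.e.
\begin{equation}
p_m - p_{m-1} < \frac{p_m}{k}.
\end{equation}
I would record this equivalence as a short lemma, in the same style as Lemma~\ref{lemma2bt}, noting that it is both necessary and sufficient and no auxiliary integer-to-prime reduction is needed (we are already in the prime case).

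Next I would verify the gap inequality for all sufficiently large $m$ by invoking Dusart's bound $p_m - p_{m-1} \leq p_{m-1}/\ln^2 p_{m-1}$ (valid for $m-1 > 463$, as already used in the proof of Theorem~\ref{theorem4bt}). This yields
\begin{equation}
p_m - p_{m-1} \leq \frac{p_{m-1}}{\ln^2 p_{m-1}} < \frac{p_m}{\ln^2 p_{m-1}},
\end{equation}
so it suffices to require $\ln^2 p_{m-1} \geq k$, equivalently $p_{m-1} \geq e^{\sqrt{k}}$. Setting $C(k) = p_r$ where $p_{r-1} \geq \max\bigl(p_{463},\,e^{\sqrt{k}}\bigr)$ then completes the argument: for every prime $n = p_m > C(k)$, the prime $p_{m-1}$ lies in $\bigl(\tfrac{k-1}{k}n,\,n\bigr)$.

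The argument presents no serious obstacle — it is a direct specialization of the Dusart-based reductions that drive Theorems~\ref{theorem4bt}, \ref{theorem2et}, and so on. The only points requiring care are the bookkeeping between $p_m$ and $p_{m-1}$ inside Dusart's bound (the estimate is stated in terms of $p_{m-1}$, so the passage to $p_m/\ln^2 p_{m-1}$ uses a single crude inequality $p_{m-1} < p_m$), and the check that the threshold $e^{\sqrt{k}}$ is absorbed by $C(k)$ uniformly for $k \geq 2$: since $\sqrt{k} \geq \sqrt{2}$, the quantity $e^{\sqrt{k}}$ grows without bound in $k$, so $C(k)$ cannot be taken independent of $k$ — but the statement only asks for a computable constant $C(k)$ depending on $k$, which is exactly what the construction provides.
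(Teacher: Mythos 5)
Your proposal is correct and follows essentially the same route as the paper: the equivalence of the statement with the gap inequality $p_m - p_{m-1} < p_m/k$ is exactly Theorem~\ref{theoremCprimeFT}, and the verification of that inequality via Dusart's bound with the threshold $p_{m-1} > e^{\sqrt{k}}$ is exactly what Theorem~\ref{theoremFFT} does (the paper even arrives at the same constant $C(k) = \max(p_r, p_{465})$ with $p_{r-1} < \exp(\sqrt{k}) < p_r$). The only difference is organizational: you skip the integer-to-prime reduction (Lemma~\ref{lemmaAprimeFT}) since the statement is already about primes, which is a legitimate shortcut.
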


\begin{lemma}\label{lemmaAFT}
Theorem \ref{theoremAFT} is true for all integers $n > C(k)$ if and only if theorem \ref{theoremBFT} also is true for all primes $n > C(k)$. 
\end{lemma}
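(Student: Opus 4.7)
The plan is to prove the biconditional by handling the two implications separately; only one has content.

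The forward direction (Theorem \ref{theoremAFT} implies Theorem \ref{theoremBFT}) is immediate, since every prime $n > C(k)$ is in particular an integer $n > C(k)$, so the conclusion of Theorem \ref{theoremAFT} applied to such an $n$ is exactly the conclusion of Theorem \ref{theoremBFT}.

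For the reverse direction, I will mimic the minimal-counterexample technique used in Lemma \ref{lemma1bt} and Lemma \ref{lemmaALC}. Suppose Theorem \ref{theoremBFT} holds for all primes greater than $C(k)$, and let $n_0$ be the smallest integer above $C(k)$ for which Theorem \ref{theoremAFT} fails; that is, the interval $(n_0, \frac{k}{k-1} n_0)$ contains no primes. Let $p_{m-1}, p_m$ be the pair of consecutive primes with $p_{m-1} \leq n_0 < p_m$; possibly after replacing $C(k)$ by $p_r$ where $p_{r-1} \leq C(k) < p_r$ (a standard adjustment used throughout the paper), I may assume $p_{m-1} > C(k)$. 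I now split into two cases. If $n_0$ is prime, then $p_{m-1} = n_0$ and applying Theorem \ref{theoremBFT} to $p_{m-1}$ yields a prime in $(n_0, \frac{k}{k-1} n_0)$, a contradiction. If $n_0$ is composite, then $p_{m-1} < n_0$, and the interval $(p_{m-1}, \frac{k}{k-1} p_{m-1})$ is contained in $(p_{m-1}, n_0] \cup (n_0, \frac{k}{k-1} n_0)$. The first piece is prime-free because $p_{m-1}$ is the largest prime at most $n_0$ and $n_0$ itself is composite, while the second piece is prime-free by the defining property of $n_0$; hence $(p_{m-1}, \frac{k}{k-1} p_{m-1})$ contains no prime, contradicting Theorem \ref{theoremBFT} applied to the prime $p_{m-1}$.

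The argument is routine and no step presents a genuine obstacle. The only care required lies in arranging the threshold so that the prime $p_{m-1}$ immediately below a minimal counterexample $n_0$ still lies above $C(k)$; this is handled by the standard device of enlarging the constant to $p_r$ as in the earlier lemmas of the paper.
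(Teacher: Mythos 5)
Your proof is correct and follows essentially the same minimal-counterexample argument as the paper's own proof, including the same device of shifting the threshold to $p_r$ with $p_{r-1} \leq C(k) < p_r$ so that the prime $p_{m-1}$ just below $n_0$ still exceeds the constant. If anything you are slightly more careful than the paper, which writes $p_{n-1} < n_0 < p_n$ and thereby tacitly assumes the minimal counterexample $n_0$ is composite, whereas you treat the case of prime $n_0$ explicitly.
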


\begin{proof}
Let theorem \ref{theoremAFT} be true then theorem \ref{theoremBFT} is true for all prime $n > C(k)$. Let theorem \ref{theoremBFT} be true for all primes $p \geq p_r$ where $p_{r-1} \leq C(k) < p_r$ but theorem \ref{theoremAFT} is false for some integers. Let $n_0$ be the minimal natural number such that an interval $(n_0, \frac{k}{k-1}n_0)$ contains no prime numbers. Let $p_{n-1}, p_n$ be a pair neighbouring primes such that $C(k)< p_r\leq p_{n-1}<n_0<p_n$ then the interval $(p_{n-1}, (\frac{k}{k-1})p_{n-1})$ doesn't have any prime numbers. Indeed the interval $(p_{n-1}, n_0)$ doesn't contain any prime numbers nor does the interval $(n_0, (\frac{k}{k-1})p_{n-1}) \subset (n_0, (\frac{k}{k-1})n_0)$. Thus $(p_{n-1}, \frac{k}{k -1}p_{n-1})$ doesn't contain any prime numbers which is a contradiction.
\end{proof} 

\begin{theorem}\label{theoremCFT}
Theorem \ref{theoremBFT} is true if and only if there is such a constant $C(k)$ that for every pair of neighbouring prime numbers $p_{n-1}, p_n > C(k)$ the following inequality is satisfied:
\begin{equation}
p_n - p_{n - 1} < \frac{p_{n-1}}{k - 1}
\end{equation}
\end{theorem}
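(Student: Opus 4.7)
The plan is to mirror the ``gap $\Longleftrightarrow$ existence'' equivalences already established earlier in the paper, namely Lemma~\ref{lemma2bt}, Lemma~\ref{lemmaBLC}, Theorem~\ref{theorem4et}, and Theorem~\ref{theoremDDCC}; the present claim is their direct analogue for the interval $(p_{n-1},\frac{k}{k-1}p_{n-1})$. I would prove the biconditional by handling the two implications separately, using only elementary algebra and the definition of ``next prime.''

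For the forward direction, I would assume Theorem~\ref{theoremBFT} holds and let $p_{n-1},p_n$ be any pair of neighbouring primes with $p_{n-1}>C(k)$. Applying Theorem~\ref{theoremBFT} with $n:=p_{n-1}$ produces a prime $q$ satisfying $p_{n-1}<q<\frac{k}{k-1}p_{n-1}$. Since $p_n$ is by definition the smallest prime strictly exceeding $p_{n-1}$, we must have $p_n\le q$, and therefore $p_n<\frac{k}{k-1}p_{n-1}$. Subtracting $p_{n-1}$ from both sides gives the desired estimate $p_n-p_{n-1}<\frac{p_{n-1}}{k-1}$.

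For the reverse direction, I would assume the gap inequality $p_n-p_{n-1}<\frac{p_{n-1}}{k-1}$ holds for every consecutive pair of primes above $C(k)$. Given any prime $n>C(k)$, identify $n$ with $p_{n-1}$ and let $p_n$ be its successor. The hypothesis immediately yields $p_n<p_{n-1}+\frac{p_{n-1}}{k-1}=\frac{k}{k-1}p_{n-1}=\frac{k}{k-1}n$, so $p_n$ itself is a prime lying in the interval $(n,\frac{k}{k-1}n)$, which is precisely the content of Theorem~\ref{theoremBFT}.

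Both implications are routine algebraic manipulations, so I do not anticipate a genuine obstacle. The only point requiring care is the bookkeeping of the constant $C(k)$: since the same threshold works in each direction of the argument, the two statements share a common $C(k)$ automatically, and no further adjustment is needed.
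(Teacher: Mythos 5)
Your proposal is correct and follows essentially the same route as the paper: in the forward direction the paper likewise takes the prime $q$ guaranteed in $(p_{n-1},\tfrac{k}{k-1}p_{n-1})$, notes $p_n\le q$ so $p_n$ lies in that interval, and subtracts; in the reverse direction it likewise observes that the gap bound places $p_n$ itself in the interval. No substantive difference.
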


\begin{proof}
Let theorem \ref{theoremBFT} be true, then the interval $(p_{n-1}, (\frac{k}{k-1}) p_{n-1})$ contains a prime number $q$ such that $p_{n-1}< p_n \leq q$ and $p_n$ belongs to this interval. Thus we have:
\begin{equation}
p_{n - 1} < p_n < (\frac{k}{k-1})p_{n-1}  \text{ and }  p_n - p_{n - 1} < \frac{p_{n - 1}}{k - 1}
\end{equation}
Let the inequality $p_n - p_{n - 1} < \frac{p_{n - 1}}{k - 1}$ be true for any pair prime number then: $p_{n-1} < p_n < p_{n - 1} +  \frac{p_{n - 1}}{k - 1}$ and $p_{n}$ belongs to the interval $(p_{n-1}, (\frac{k}{k-1})p_{n-1})$. Thus any interval $(p_{n-1}, (\frac{k}{k-1}) p_{n-1})$ contains a prime number.
\end{proof}

\begin{theorem}\label{theoremDFT}
Theorem \ref{theoremAFT} is true if and only if for any pair neighbouring primes $p_{n-1}, p_n> C(k)$ the following inequality holds:
\begin{equation}
p_n - p_{n -1} < \frac{p_{n-1}}{k - 1}
\end{equation}
\end{theorem}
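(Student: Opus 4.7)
The plan is to derive the equivalence as an immediate transitive consequence of the two results already proved in this subsection, namely Lemma \ref{lemmaAFT} and Theorem \ref{theoremCFT}. Lemma \ref{lemmaAFT} provides the equivalence between Theorem \ref{theoremAFT} (for all integers $n > C(k)$) and Theorem \ref{theoremBFT} (for all primes $n > C(k)$) with the same threshold constant, and Theorem \ref{theoremCFT} recasts Theorem \ref{theoremBFT} as the gap inequality $p_n - p_{n-1} < \frac{p_{n-1}}{k-1}$ for every pair of neighbouring primes $p_{n-1}, p_n > C(k)$. Composing these two equivalences yields Theorem \ref{theoremDFT} at once.

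Concretely, I would write the proof as two implications. For the forward direction, assume Theorem \ref{theoremAFT} holds with constant $C(k)$; Lemma \ref{lemmaAFT} gives Theorem \ref{theoremBFT} with the same constant, and Theorem \ref{theoremCFT} then yields the gap inequality for all neighbouring primes $p_{n-1}, p_n > C(k)$. For the reverse direction, assume the gap inequality holds for all neighbouring primes $p_{n-1}, p_n > C(k)$; Theorem \ref{theoremCFT} gives Theorem \ref{theoremBFT}, and Lemma \ref{lemmaAFT} then gives Theorem \ref{theoremAFT}.

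The only point requiring a moment of care — and what I would flag as the main (very minor) obstacle — is the bookkeeping of the threshold constants across the three statements. Lemma \ref{lemmaAFT} is stated with a single $C(k)$ on both sides but its proof quietly replaces the integer threshold by the next prime $p_r$ with $p_{r-1} \leq C(k) < p_r$; similarly Theorem \ref{theoremCFT} uses its own $C(k)$. Since each equivalence is proved with a uniformly valid threshold, I would simply take $C(k)$ in the statement of Theorem \ref{theoremDFT} to be the maximum of the constants produced by Lemma \ref{lemmaAFT} and Theorem \ref{theoremCFT}, which is still a computable constant depending only on $k$. No new analytic content is introduced; the proof is purely a chaining of two previously established equivalences, and I would close it with a one-line reference of the form "Theorem \ref{theoremDFT} follows from Lemma \ref{lemmaAFT} and Theorem \ref{theoremCFT}."
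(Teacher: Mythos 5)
Your proposal is correct and follows exactly the route the paper takes: its proof of Theorem \ref{theoremDFT} is a one-line citation of Theorem \ref{theoremCFT} and Lemma \ref{lemmaAFT} (with Theorem \ref{theoremBFT} as the intermediate statement), i.e.\ precisely the chaining of the two equivalences you describe. Your additional remark about reconciling the threshold constants by taking a maximum is a sensible clarification of a point the paper glosses over, but it does not change the argument.
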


\begin{proof}
Theorem \ref{theoremDFT} is true due to theorems \ref{theoremBFT}, \ref{theoremCFT} and lemma \ref{lemmaAFT}.
\end{proof}

\begin{lemma}\label{lemmaAprimeFT}
Theorem \ref{theoremAprimeFT} is true for all integers $n > C(k)$ if and only if theorem \ref{theoremBprimeFT} is true for all prime $n > C(k)$. 
\end{lemma}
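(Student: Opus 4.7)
The plan is to mirror the proof of Lemma \ref{lemmaAFT} and the other analogous bridging lemmas in this paper: both directions are handled by the observation that every prime is an integer, together with a minimal-counterexample argument in the reverse direction. The forward direction (Theorem \ref{theoremAprimeFT} $\Rightarrow$ Theorem \ref{theoremBprimeFT}) is immediate, since restricting a statement about all integers $n > C(k)$ to the subset of primes preserves it with the same constant $C(k)$.

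For the reverse direction, I would argue by contradiction. Assume Theorem \ref{theoremBprimeFT} holds for all primes $p \geq p_r$, where $p_{r-1} \leq C(k) < p_r$, but that Theorem \ref{theoremAprimeFT} fails at some integer $n > p_r$. Let $n_0$ be the minimal such integer, so the interval $(\tfrac{k-1}{k}n_0,\, n_0)$ contains no primes. I would then pick neighbouring primes $p_{n-1}, p_n$ with $p_{n-1} < n_0 \leq p_n$, and show that the interval $(\tfrac{k-1}{k}p_n,\, p_n)$ is prime-free, contradicting Theorem \ref{theoremBprimeFT} applied to $p_n \geq n_0 > p_r$.

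To verify the prime-freeness, I would split $(\tfrac{k-1}{k}p_n,\, p_n)$ as $(\tfrac{k-1}{k}p_n,\, n_0) \cup [n_0, p_n)$. The right piece contains no primes by the choice of $p_n$ as the least prime $\geq n_0$ (so $n_0$ itself is prime only in the degenerate subcase $p_n = n_0$, handled directly). The left piece is a subset of $(\tfrac{k-1}{k}n_0,\, n_0)$, since $p_n \geq n_0$ forces $\tfrac{k-1}{k}p_n \geq \tfrac{k-1}{k}n_0$, and is therefore also prime-free by the minimality of $n_0$.

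The argument is essentially mechanical and uses only monotonicity of $x \mapsto \tfrac{k-1}{k}x$; I do not expect any serious obstacle. The only small subtlety is the endpoint case $p_n = n_0$, which I would handle by noting that then the prime $p_n$ itself violates Theorem \ref{theoremBprimeFT} directly, without needing the displacement argument. A one-line closing remark citing Theorems \ref{theoremAprimeFT}, \ref{theoremBprimeFT} and the contradiction completes the proof.
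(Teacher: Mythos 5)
Your proposal is correct and follows essentially the same route as the paper: the forward direction by restriction to primes, and the reverse direction by taking a minimal counterexample $n_0$, bracketing it by neighbouring primes, and showing $(\tfrac{k-1}{k}p_n, p_n)$ decomposes into two prime-free pieces via monotonicity of $x \mapsto \tfrac{k-1}{k}x$. Your explicit treatment of the case $p_n = n_0$ (where $n_0$ itself is prime and violates Theorem \ref{theoremBprimeFT} directly) is a small point the paper glosses over by writing $p_{n-1} < n_0 < p_n$, but it does not change the argument.
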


\begin{proof}
Let theorem \ref{theoremAprimeFT} be true then theorem \ref{theoremBprimeFT} is true for all prime $n > C(k)$. Let theorem \ref{theoremBprimeFT} be true for all primes $p \geq p_r$ where p$_{r-1}\leq C(k)< p_r$ but theorem \ref{theoremAprimeFT} is false for some integers. 
\newline
\newline
Let $n_0$ be the minimal natural number such that an interval $(\frac{k - 1}{k}n_0, n_0)$ doesn't contain any prime numbers. Let $p_{n-1}, p_n$ be a pair of neighbouring primes $C(k)< p_r\leq p_{n-1}<n_0<p_n$ then the interval $(\frac{k - 1}{k} p_n, p_n)$ doesn't have any prime numbers. Indeed the interval $(n_0, p_n)$ doesn't contain any prime numbers and the interval $(\frac{k - 1}{k}p_n, n_0) \subset (\frac{k - 1}{k}n_0, n_0)$ doesn't contain any prime numbers. Therefore, $(\frac{k - 1}{k} p_n, p_n)$ doesn't contain any prime numbers, leading to a contradiction.
\end{proof}

\begin{theorem}\label{theoremCprimeFT}
Theorem \ref{theoremBprimeFT} is true if and only if there is such a constant $C(k)$ that for every pair of neighbouring primes $p_{n-1}, p_n > C(k)$ the following inequality holds:
\begin{equation}
p_n - p_{n - 1} < \frac{p_n}{k}
\end{equation}
\end{theorem}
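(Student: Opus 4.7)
The plan is to mirror almost verbatim the proof of Theorem \ref{theoremCFT}, only replacing the interval $(p_{n-1},\frac{k}{k-1}p_{n-1})$ by its ``left-sided'' counterpart $(\frac{k-1}{k}p_n,p_n)$. The equivalence is stated as ``iff'', so I would split the argument into two short directions.

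For the forward direction, suppose Theorem \ref{theoremBprimeFT} holds with constant $C(k)$, and let $p_{n-1},p_n$ be any pair of neighbouring primes with $p_n>C(k)$. Apply Theorem \ref{theoremBprimeFT} to the prime $p_n$: there is a prime $q$ with $\frac{k-1}{k}p_n<q<p_n$. Because $p_{n-1}$ is the largest prime strictly below $p_n$, we must have $q\le p_{n-1}$, and hence $p_{n-1}>\frac{k-1}{k}p_n$. Rearranging gives exactly
\begin{equation}
p_n-p_{n-1}<p_n-\tfrac{k-1}{k}p_n=\tfrac{p_n}{k},
\end{equation}
which is the desired inequality.

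For the converse direction, suppose the inequality $p_n-p_{n-1}<\frac{p_n}{k}$ holds for every pair of neighbouring primes with $p_{n-1},p_n>C(k)$. Fix any prime $p_n>C(k)$ and let $p_{n-1}$ be its predecessor. The hypothesis gives $p_{n-1}>p_n-\frac{p_n}{k}=\frac{k-1}{k}p_n$, so $p_{n-1}\in(\frac{k-1}{k}p_n,p_n)$. Thus the open interval $(\frac{k-1}{k}p_n,p_n)$ contains a prime, which is precisely the conclusion of Theorem \ref{theoremBprimeFT} for the prime $p_n$.

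Since each direction is a one-line arithmetic manipulation once the correct consecutive-prime witness is identified, there is no real obstacle; the only thing to be careful about is the direction of the inequality (the bound is $\frac{p_n}{k}$ rather than $\frac{p_{n-1}}{k-1}$ as in Theorem \ref{theoremCFT}), and making sure that in the forward direction we exploit $q\le p_{n-1}$ (not $q\ge p_n$) because here the sought prime lies below $n$ rather than above it. The conclusion of the lemma then follows, and Theorem \ref{theoremDFT}'s analogue for the left-sided version can be derived immediately from Theorem \ref{theoremBprimeFT}, Theorem \ref{theoremCprimeFT} and Lemma \ref{lemmaAprimeFT}, in perfect parallel to the derivation of Theorem \ref{theoremDFT}.
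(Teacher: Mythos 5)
Your proposal is correct and follows essentially the same route as the paper's own proof: in the forward direction you take the prime $q$ guaranteed in $(\frac{k-1}{k}p_n,\,p_n)$, note $q\le p_{n-1}$ so that $p_{n-1}$ itself lies in the interval, and rearrange; in the converse you observe that the gap bound places $p_{n-1}$ in the interval. No substantive difference from the paper's argument.
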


\begin{proof}
Let theorem  \ref{theoremBprimeFT} be true, then the interval $q \in (\frac{k - 1}{k} p_n, p_n)$ then $q \leq p_{n-1} < p_n$ and $p_{n-1}$ also belongs to the interval $(\frac{k - 1}{k} p_n, p_n)$. Thus we have
\begin{equation}
p_n - \frac{p_n}{k} < p_{n-1} < p_n \text{ and } p_n - p_{n - 1} < \frac{p_n}{k}
\end{equation}
Let the inequality $p_n - p_{n - 1} < \frac{p_n}{k}$ be true then:
\begin{equation}
\frac{k - 1}{k} p_n = p_n - \frac{p_n}{k} < p_{n-1} < p_n
\end{equation}
and $p_{n-1} \in (\frac{k - 1}{k} p_n, p_n)$. Therefore, the interval $(\frac{k - 1}{k} p_n, p_n)$ contains a prime.
\end{proof}

\begin{theorem}\label{theoremDprimeFT}
Theorem \ref{theoremAprimeFT} is true if and only if for any pair of neighbouring primes $p_{n-1}, p_n> C(k)$ satisfy the inequality:
\begin{equation}
p_n - p_{n -1} < \frac{p_n}{k}
\end{equation}
\end{theorem}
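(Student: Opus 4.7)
The plan is to combine the two immediately preceding results, exactly mirroring the proof of Theorem \ref{theoremDFT}. Lemma \ref{lemmaAprimeFT} already establishes that Theorem \ref{theoremAprimeFT} (the statement over all integers $n > C(k)$) is equivalent to Theorem \ref{theoremBprimeFT} (the restriction of the same statement to prime arguments $n > C(k)$). Separately, Theorem \ref{theoremCprimeFT} characterises Theorem \ref{theoremBprimeFT} by the gap inequality $p_n - p_{n-1} < p_n/k$ holding for every sufficiently large pair of consecutive primes.

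First I would invoke Lemma \ref{lemmaAprimeFT} to trade the quantification over all $n > C(k)$ for quantification over primes $n > C(k)$, replacing Theorem \ref{theoremAprimeFT} by Theorem \ref{theoremBprimeFT}. Then I would apply Theorem \ref{theoremCprimeFT} to convert the existential statement ``there is a prime in $(\tfrac{k-1}{k} p_n, p_n)$'' into the gap bound $p_n - p_{n-1} < p_n/k$. Since both Lemma \ref{lemmaAprimeFT} and Theorem \ref{theoremCprimeFT} are biconditionals, running the chain in reverse yields the converse implication as well, so the full ``if and only if'' falls out without any additional estimation.

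There is essentially no analytic obstacle here: the proposition is a formal consequence of the equivalences already proved, and the written proof can simply cite Lemma \ref{lemmaAprimeFT} together with Theorem \ref{theoremCprimeFT}. The only minor bookkeeping issue is that the constant $C(k)$ supplied by the lemma and the constant supplied by Theorem \ref{theoremCprimeFT} may a priori differ; one harmonises them by taking the maximum, which entails no further calculation. In short, this theorem plays the role of a clean packaging result, and I expect the proof to be a single sentence of the form ``Theorem \ref{theoremDprimeFT} follows from Theorem \ref{theoremCprimeFT} and Lemma \ref{lemmaAprimeFT}.''
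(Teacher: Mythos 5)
Your proposal matches the paper's own proof, which is exactly the one-sentence citation you predicted: ``Theorem \ref{theoremDprimeFT} is true due to theorems \ref{theoremBprimeFT}, \ref{theoremCprimeFT} and lemma \ref{lemmaAprimeFT}.'' Chaining the two biconditionals (and harmonising the constants by taking a maximum, as you note) is all that is required, so your approach is correct and essentially identical to the paper's.
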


\begin{proof}
Theorem \ref{theoremDprimeFT} is true due to theorems \ref{theoremBprimeFT}, \ref{theoremCprimeFT} and lemma \ref{lemmaAprimeFT}.
\end{proof}

\begin{theorem}\label{theoremEFT}
The fractional theorem is true if there is such an integer $C$ that every pair of neighbouring primes $p_{n-1}, p_n> C$ satisfies the following inequalities:
\begin{equation}
p_n - p_{n -1} < \min(\frac{p_{n-1}}{k - 1}, \frac{p_n}{k}).
\end{equation}
\end{theorem}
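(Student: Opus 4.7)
The plan is to assemble the Fractional theorem from the two one-sided results established earlier. Theorem \ref{theoremDFT} already gives the equivalence of Theorem \ref{theoremAFT} (``there is a prime in $(n,\frac{k}{k-1}n)$'') with the gap estimate $p_n - p_{n-1} < \frac{p_{n-1}}{k-1}$, and Theorem \ref{theoremDprimeFT} gives the equivalence of Theorem \ref{theoremAprimeFT} (``there is a prime in $(\frac{k-1}{k}n,n)$'') with $p_n - p_{n-1} < \frac{p_n}{k}$. The hypothesis of the present theorem is precisely the conjunction of these two gap bounds packaged as a single minimum, so the bulk of the proof amounts to reading off what has already been established.

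My first step would be to observe that $p_n - p_{n-1} < \min\bigl(\frac{p_{n-1}}{k-1},\frac{p_n}{k}\bigr)$ implies each of the individual inequalities separately for every pair of neighbouring primes above $C$. Therefore, by Theorems \ref{theoremDFT} and \ref{theoremDprimeFT}, both Theorem \ref{theoremAFT} and Theorem \ref{theoremAprimeFT} hold for all integers $n > C$. Given such an $n$, I would then apply Theorem \ref{theoremAprimeFT} to extract a prime $p \in (\frac{k-1}{k}n, n)$ and Theorem \ref{theoremAFT} to extract a prime $q \in (n, \frac{k}{k-1}n)$. Since $k \geq 2$ ensures $\frac{k-1}{k}n < n < \frac{k}{k-1}n$, both $p$ and $q$ lie in the interval $(\frac{k-1}{k}n, \frac{k}{k-1}n)$, and the strict separation $p < n < q$ makes them distinct. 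This delivers the two primes demanded by the Fractional theorem with $C(k) = C$.

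I do not foresee any serious obstacle: all the nontrivial work has been absorbed into the one-sided equivalences, and the present theorem is a synthesis step. The only point one must be mildly careful about is the distinctness of $p$ and $q$, which is immediate because they live in disjoint half-intervals separated by $n$, and the fact that $n$ itself need not be prime in either application --- but this is already accounted for, since Theorems \ref{theoremAFT} and \ref{theoremAprimeFT} are stated for arbitrary integers $n > C(k)$, not just prime ones.
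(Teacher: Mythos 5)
Your proposal is correct and follows essentially the same route as the paper: the paper's proof likewise notes that the minimum condition yields both one-sided gap bounds, invokes the earlier equivalences so that Theorems \ref{theoremAFT} and \ref{theoremAprimeFT} hold, and concludes the Fractional theorem. You merely spell out the extraction of the two primes and their distinctness more explicitly than the paper does.
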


\begin{proof}
If there is a $C$ such that for each pair neighbouring prime numbers $p_{n-1}, p_n> C$ satisfy: 
\begin{equation}
p_n - p_{n -1} < \min(\frac{p_{n-1}}{k - 1}, \frac{p_n}{k}).  
\end{equation}
Then theorems \ref{theoremAFT}, \ref{theoremAprimeFT} will be satisfied and the fractional theorem will also be true. 
\end{proof}

\begin{theorem}\label{theoremFFT}
There exists an integer $C$ that that for every pair of neighbouring primes $p_{n-1}, p_n> C$ the following inequality takes place
\begin{equation}
p_n - p_{n -1} < \min(\frac{p_{n-1}}{k - 1}, \frac{p_n}{k}).
\end{equation}
\end{theorem}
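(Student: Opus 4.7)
The plan is to invoke the same Dusart-type estimate that was used in the proof of Theorem \ref{theorem4bt}, namely $p_{n+1}\le p_n\bigl(1+\tfrac{1}{\ln^2 p_n}\bigr)$ for $n>463$, which rearranges to
\begin{equation}
p_n - p_{n-1}\;\le\;\frac{p_{n-1}}{\ln^2 p_{n-1}}.
\end{equation}
This is a bound on the prime gap that decays only logarithmically slower than $p_{n-1}$, so it will eventually beat both of the linear-in-$p$ competitors $\tfrac{p_{n-1}}{k-1}$ and $\tfrac{p_n}{k}$.

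First I would handle the first term of the minimum: the inequality $\tfrac{p_{n-1}}{\ln^2 p_{n-1}}<\tfrac{p_{n-1}}{k-1}$ is equivalent to $\ln^2 p_{n-1}>k-1$, hence to $p_{n-1}>e^{\sqrt{k-1}}$. Next I would handle the second term: since $p_{n-1}<p_n$, we have $\tfrac{p_{n-1}}{\ln^2 p_{n-1}}<\tfrac{p_{n-1}}{k}\le\tfrac{p_n}{k}$ as soon as $\ln^2 p_{n-1}>k$, i.e. $p_{n-1}>e^{\sqrt{k}}$. Taking the larger of the two thresholds, the Dusart bound on the gap is strictly smaller than $\min\!\bigl(\tfrac{p_{n-1}}{k-1},\tfrac{p_n}{k}\bigr)$ for every $p_{n-1}>e^{\sqrt{k}}$.

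Finally I would set
\begin{equation}
C\;=\;\max\!\bigl(p_{465},\;p_r\bigr),\qquad\text{where $p_{r-1}\le e^{\sqrt{k}}<p_r$,}
\end{equation}
so that for every pair of neighbouring primes with $p_{n-1},p_n>C$ both the applicability of Dusart's estimate and the inequality $\ln^2 p_{n-1}>k$ are guaranteed, and therefore
\begin{equation}
p_n-p_{n-1}\;\le\;\frac{p_{n-1}}{\ln^2 p_{n-1}}\;<\;\min\!\Bigl(\tfrac{p_{n-1}}{k-1},\,\tfrac{p_n}{k}\Bigr),
\end{equation}
which is exactly what the theorem asserts. There is no real obstacle here: the work is just to record that the threshold $C$ depends effectively on $k$ through $e^{\sqrt{k}}$, so that, combined with Theorem \ref{theoremEFT}, the fractional theorem follows with a computable constant $C(k)$ as claimed in the statement of Theorem \ref{theoremFT}.
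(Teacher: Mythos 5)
Your proposal is correct and follows essentially the same route as the paper: both apply Dusart's bound $p_n-p_{n-1}\le p_{n-1}/\ln^2 p_{n-1}$, reduce the minimum to the single condition $\ln^2 p_{n-1}>k$ (the paper does this by noting $\tfrac{p_{n-1}}{k}$ lower-bounds both terms, you by checking the two terms separately, which amounts to the same threshold $p_{n-1}>e^{\sqrt{k}}$), and arrive at the same constant $C=\max(p_{465},p_r)$ with $p_{r-1}\le e^{\sqrt{k}}<p_r$.
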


\begin{proof}
Since $\frac{p_{n -1}}{k} < \min(\frac{p_{n-1}}{k - }, \frac{p_n}{k})$ we will estimate $C$ using $p_n - p_{n -1} < \frac{p_{n-1}}{k}$.
\newline
\newline
Indeed using one of Proposition 1.10 from \cite{Dusart:1998aa} (For  $k > 463$ , $p_{k+1} \leq p_k(1 + \frac{1}{\ln^2p_k}) )$
we have the following inequality:
\begin{equation}
p_n - p_{n -1} \leq \frac{p_{n -1} }{\ln^2 p_{n -1}} 
\end{equation}
Thus our problem is to find such $n_0$ that for all $n > n_0 + 1$ the following holds:
\begin{equation} 
\frac{p_{n -1}}{\ln^2 p_{n -1}} <   \frac{p_{n -1}}{k} 
\end{equation}
Since $\ln(p_n)$ is a strictly increasing function so there exist such $n_0$ that for any $n > n_0 + 1$ this inequality takes place. Thus we have the following estimate for $C(k)$:
\begin{equation} 
C(k) = \max(p_r, p_{465}), \text{ where } p_{r-1} < \exp(\sqrt{k}) < p_r \text{ and } n_0 = max(r + 1, 465).
\end{equation}
\end{proof}

\begin{theorem}\label{theoremGFT}
The fractional theorem is true for all integers $n > C(k) = max(p_r, p_{465})$, where $p_{r-1} < \exp(\sqrt{k}) < p_r$
\end{theorem}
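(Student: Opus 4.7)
The plan is to assemble this theorem as a straightforward consequence of the two immediately preceding results: Theorem \ref{theoremEFT}, which gives the reduction of the fractional theorem to a prime-gap inequality, and Theorem \ref{theoremFFT}, which verifies that prime-gap inequality for sufficiently large primes with the explicit threshold $\max(p_r, p_{465})$, $p_{r-1} < \exp(\sqrt{k}) < p_r$. So the proof should be essentially one sentence of synthesis.

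First I would invoke Theorem \ref{theoremFFT} to secure the existence of the integer $C(k) = \max(p_r, p_{465})$, with $p_{r-1} < \exp(\sqrt{k}) < p_r$, such that every pair of neighbouring primes $p_{n-1}, p_n > C(k)$ satisfies
\[
p_n - p_{n-1} < \min\!\left(\tfrac{p_{n-1}}{k-1}, \tfrac{p_n}{k}\right).
\]
Since this is precisely the hypothesis of Theorem \ref{theoremEFT}, I would then apply that theorem to conclude that the fractional theorem itself holds for all integers $n > C(k)$. Unwinding the chain, this routes through Theorems \ref{theoremDFT} and \ref{theoremDprimeFT} (transferring the gap inequality into the existence of primes in $(n, \tfrac{k}{k-1}n)$ and $(\tfrac{k-1}{k}n, n)$ respectively) together with Lemmas \ref{lemmaAFT} and \ref{lemmaAprimeFT} (lifting the statement from primes to integers).

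The only point worth checking is that a single threshold suffices for both halves of the minimum. This is immediate: because $\tfrac{p_{n-1}}{k} \leq \min(\tfrac{p_{n-1}}{k-1}, \tfrac{p_n}{k})$, the estimate coming from Dusart's bound $p_{n+1} \leq p_n(1 + \tfrac{1}{\ln^2 p_n})$ needs only to dominate $\tfrac{p_{n-1}}{k}$, which is exactly the computation $\ln^2 p_{n-1} > k$, i.e.\ $p_{n-1} > \exp(\sqrt{k})$, already carried out in Theorem \ref{theoremFFT}. The Dusart bound itself requires $p_n > p_{465}$, which accounts for the second term in $\max(p_r, p_{465})$.

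There is no real obstacle here: the work was done in Theorems \ref{theoremEFT} and \ref{theoremFFT}, and this final theorem is simply the combined statement. The proof reduces to the single remark that the fractional theorem is a direct consequence of Theorems \ref{theoremEFT} and \ref{theoremFFT}, with the explicit constant inherited verbatim from Theorem \ref{theoremFFT}.
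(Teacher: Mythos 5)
Your proposal is correct and matches the paper's own argument, which simply cites Theorems \ref{theoremEFT} and \ref{theoremFFT}; your additional remark that $\frac{p_{n-1}}{k}$ is dominated by both terms of the minimum is exactly the observation already made inside the proof of Theorem \ref{theoremFFT}, so nothing new is needed.
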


\begin{proof}
Theorem \ref{theoremGFT} is true due to theorems \ref{theoremEFT}, \ref{theoremFFT}.
\end{proof}

\subsubsection{Application of the fractional theorem}

\begin{theorem}[Strong theorem (exponent 3)]\label{theorem1AFT}
There exists a computable integer $C$ such that for each integer $n>C$ there are at the least two primes $p, q$ such that $(n - 1)^3 < p, q < n^3$.
\end{theorem}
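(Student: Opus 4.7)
The plan is to deduce Theorem~\ref{theorem1AFT} from the fractional theorem (Theorem~\ref{theoremFT}) by letting its parameter $k$ depend on $n$: the aim is to find a real $k=k(n)$ and an integer $N$ so that the fractional-theorem interval $\bigl(\tfrac{k-1}{k}N,\tfrac{k}{k-1}N\bigr)$ lies inside $\bigl((n-1)^3,n^3\bigr)$. The two primes produced by Theorem~\ref{theoremFT} then automatically belong to the cubic interval.

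First I would work out when such an $N$ exists. The two containment conditions $(n-1)^3<\tfrac{k-1}{k}N$ and $\tfrac{k}{k-1}N<n^3$ can be met simultaneously by some integer $N$ provided $\bigl(\tfrac{k}{k-1}\bigr)^2<\bigl(\tfrac{n}{n-1}\bigr)^3$. A direct binomial expansion of both sides shows this inequality is achieved as soon as $k\gtrsim\tfrac{2n}{3}$; I would therefore set $k=\tfrac{2n}{3}+2$ (real values $k\ge 2$ are permitted by the fractional theorem) and $N=\bigl\lceil\tfrac{k}{k-1}(n-1)^3\bigr\rceil+1$, which meets both bounds for $n$ past a small explicit threshold.

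The remaining task is to verify the hypothesis $N>C(k)$ of the fractional theorem. By Theorem~\ref{theoremGFT}, $C(k)=\max(p_r,p_{465})$ with $p_{r-1}<\exp(\sqrt{k})<p_r$, so $C(k)=O(\exp(\sqrt{k}))=O(\exp(\sqrt{2n/3}))$. Meanwhile $N\ge(n-1)^3$, and $(n-1)^3$ dominates $\exp(\sqrt{n})$ for large $n$; hence $N>C(k)$ for all $n$ above a computable threshold $C$ obtained by solving $(n-1)^3=\exp(\sqrt{2n/3})$ together with $n>465$. Applying Theorem~\ref{theoremFT} then yields two primes $p,q\in\bigl(\tfrac{k-1}{k}N,\tfrac{k}{k-1}N\bigr)\subset\bigl((n-1)^3,n^3\bigr)$, which is the desired conclusion.

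The main obstacle is the tension between the two constraints: enlarging $k$ to shrink the fractional interval into the cubic one also enlarges $C(k)$. The scheme closes only because $C(k)$ grows like $\exp(\sqrt{k})$ while $k$ is linear in $n$, making $C(k)$ polynomially negligible compared with $(n-1)^3$. Had the fractional theorem's constant been, say, doubly exponential in $k$, this reduction would fail; as it stands, all remaining steps are routine algebraic verifications.
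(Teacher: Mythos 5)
Your reduction has the right shape --- it is in fact the same reduction the paper uses: take $k\sim\tfrac{2n}{3}$ so that the fractional interval $\bigl(\tfrac{k-1}{k}N,\tfrac{k}{k-1}N\bigr)$ fits inside $\bigl((n-1)^3,n^3\bigr)$ for a suitable $N\approx(n(n-1))^{3/2}\approx(n-1)^3$. But the step where you "close the scheme" contains a fatal error of asymptotics, and it is exactly the point on which the whole argument turns. You invoke Theorem~\ref{theoremGFT}, whose constant is $C(k)=\max(p_r,p_{465})$ with $p_{r-1}<\exp(\sqrt{k})<p_r$, so $C(k)\asymp\exp(\sqrt{2n/3})$, and you then assert that $(n-1)^3$ dominates $\exp(\sqrt{2n/3})$ for large $n$. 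This is backwards: $\exp(\sqrt{2n/3})$ is superpolynomial in $n$ and eventually dominates every power of $n$ (already at $n\approx 700$ one has $\exp(\sqrt{2n/3})>n^3$). Hence the hypothesis $N>C(k)$ of the fractional theorem \emph{fails} for all sufficiently large $n$, and solving $(n-1)^3=\exp(\sqrt{2n/3})$ gives the point beyond which the exponential wins, not a threshold above which your argument is valid. As written, the reduction proves nothing.

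The reason the exponential constant appears is that Theorem~\ref{theoremGFT} is proved from Dusart's bound $p_n-p_{n-1}\le p_{n-1}/\ln^2 p_{n-1}$, which forces $\ln^2 p_{n-1}>k$, i.e.\ $p_{n-1}>e^{\sqrt{k}}$. To repair the proof you must not quote that constant but re-derive the threshold for the gap inequality $p_n-p_{n-1}<\min\bigl(\tfrac{p_{n-1}}{k-1},\tfrac{p_n}{k}\bigr)$ using the Baker--Harman--Pintz estimate $p_n-p_{n-1}<p_n^{21/40}$. With $k\sim\tfrac{2n}{3}$ that inequality holds once $p_{n-1}\gg k^{40/19}\asymp n^{40/19}$, a \emph{polynomial} threshold which is indeed negligible against $(n-1)^3$ since $\tfrac{40}{19}<3$. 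This is precisely what the paper does in its Lemmas~\ref{lemmaBFT} and~\ref{lemmaCFT}, obtaining $C(g)=3g^2([g^{2/19}]+1)$ and checking $2C(g)<(g(g-1))^{3/2}$ for $g>20$. Your closing remark that the argument would fail if the constant were doubly exponential is an understatement: it already fails for the singly exponential constant you actually used.
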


\begin{proof}

\begin{lemma}\label{lemmaAAFT}
Let $k = \frac{g^\frac{3}{2}}{g^\frac{3}{2} - (g - 1)^\frac{3}{2} }$ where $k > 2$ and $g$ is an integer, then there exists a computable integer $C(g)$ such that for each integer $n > C(g)$:
\begin{equation}
(\frac{g - 1}{g})^\frac{3}{2} n < p, q < (\frac{g}{g - 1})^\frac{3}{2} n
\end{equation}
\end{lemma}

\begin{proof}
Lemma \ref{lemmaAFT} is true due to theorem \ref{theoremGFT}.
\end{proof}

\noindent According to the paper \cite{Baker:2001aa} there exist $n_0, x_0$ such that for $n > n_0 + 1$ it would follow $p_{n-1}, p_n > x_0$ and the following inequality holds:
\begin{equation} 
p_n - p_{n -1} < p_n^\frac{21}{40}
\end{equation}

\begin{lemma}\label{lemmaBFT} 
Let $g > x_0$ then there exists such an integer $C(g)$ such that for all $p_{n-1}, p_n > C(g)$ we have:
\begin{equation}
p_n - p_{n -1} < \frac{p_{n-1}}{{(\frac{g^\frac{3}{2}}{(g^\frac{3}{2} - (g - 1)^\frac{3}{2}})}}
\end{equation}
\end{lemma}

\begin{proof}
Our goal to give an estimate of $C(g)$. Since $n > n_0 + 1$, then 
\begin{equation}
p_n - p_{n -1} < p_n^\frac{21}{40}
\end{equation}
Our problem is to find $C(g)$ when for each $p_{n-1}, p_n > C(g)$ the following inequality is satisfied:
\begin{equation}
p_n^\frac{21}{40} < \frac{p_{n -1}}{(\frac{g^\frac{3}{2}}{(g^\frac{3}{2}  - (g - 1)^\frac{3}{2}})} 
\end{equation}
or 			
\begin{equation}
\frac{g^\frac{3}{2}}{g^\frac{3}{2}  - (g - 1)^\frac{3}{2}} < (\frac{p_{n -1}}{{p_n}})^\frac{21}{40} p_{n -1}^\frac{19}{40}
\end{equation}
and finally:
\begin{equation}
(\frac{p_n}{p_{n - 1}})^\frac{21}{19} (\frac{g^\frac{3}{2}}{g^\frac{3}{2}  - (g - 1)^\frac{3}{2}})^\frac{40}{19} < 2^\frac{21}{19} (\frac{g^\frac{3}{2}}{g^\frac{3}{2}  - (g - 1)^\frac{3}{2}})^\frac{40}{19} <  p_{n -1}
\end{equation}

\noindent Then $2^{\frac{21}{19}} (\frac{g^\frac{3}{2}}{(g^\frac{3}{2}  - (g - 1)^\frac{3}{2}})^\frac{40}{19} < 3g^\frac{40}{19}$ and we can take $C(g) = 3g^2([g^\frac{2}{19}] +1)$
\end{proof}

\begin{lemma}\label{lemmaCFT}
The interval $(C(g), (g(g - 1))^\frac{3}{2})$ contains at least two prime numbers.
\end{lemma}

\begin{proof}
According to Bertrand's postulate the interval $(C(g), 2C(g))$ contains at least two or more prime numbers \cite{Sierpinski:1964aa}, \cite{Ramanujan:1919aa}. Let us show that $2C(g)<(g(g - 1))^\frac{3}{2}$. Indeed, $2C(g) < 6(g^\frac{40}{19} + g^2) < (g(g - 1))^\frac{3}{2}$ already takes place when $g > 20$. 
\end{proof}

\noindent Let us take $C$ equal to $p_r$ where $p_{r-1} \leq C(g) < p_r$. Since for any integer $n > p_r$ the fractional theorem is true and as $(g(g - 1))^\frac{3}{2} > p_r$. We can choose $n_0$ such that $| {n_0} - (g(g - 1))^\frac{3}{2}| \leq \frac{1}{2}$. Thus $n_0 = (g(g - 1))^\frac{3}{2} + \theta$, where $| \theta | \leq \frac{1}{2}$. And 
\begin{equation}
(\frac{g - 1}{g})^\frac{3}{2} n_0 = (g - 1)^3 + \theta(\frac{g - 1}{g})^\frac{3}{2} < p, q < (\frac{g}{g - 1})^\frac{3}{2} n_0 = g^3 + \theta (\frac{g}{g - 1})^\frac{3}{2}
\end{equation}
\noindent We have $(g - 1)^3 < p, q < g^3$ since the intervals $((g - 1)^3, (g - 1)^3 + |\theta| \frac{g - 1}{ g}^\frac{3}{2}), (g^3, g^3 + |\theta|\frac{g}{g - 1}^\frac{3}{2})$ don't contain any integers, 
\end{proof}

\subsection{Brocard's conjecture}

\begin{conjecture}[Brocard's conjecture] 
For each pair of neighbouring primes $p_{n-1}, p_n$ there are at the least four prime numbers $p, q, r, s$ where $p_{n-1}^2 < p, q, r, s < p_n^2$.
\end{conjecture}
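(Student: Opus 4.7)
The plan is to follow the paper's auxiliary-inequality methodology, paralleling the proof of the exponent-$3$ Brocard theorem. First I would introduce an auxiliary theorem of the form: there exists a computable constant $C(B)$ such that for every pair of neighbouring primes $p_{n-1}, p_n > C(B)$, the interval $(p_{n-1}^2, p_n^2)$ contains at least four primes. By the same ``minimal counter-example'' reduction used earlier (passing from an integer $n$ to the first prime $p_n > n$), this auxiliary statement implies the conjecture as stated for all sufficiently large primes, leaving only finitely many initial pairs to be verified directly. Writing $d_n := p_n - p_{n-1}$, the interval has length $d_n (p_n + p_{n-1})$, and the prime number theorem predicts $\sim d_n (p_n + p_{n-1}) / (2 \ln p_n) \to \infty$ primes inside it, so the conjecture is overwhelmingly expected.

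The natural next step is to split on the size of $d_n$. In the large-gap regime $d_n \geq 2 p_n^{1/20}$, I would iterate the Baker-Harman-Pintz bound \cite{Baker:2001aa} four times inside $(p_{n-1}^2, p_n^2)$: each application places a prime in a sub-window of length $(p_n^2)^{0.525} = p_n^{1.05}$, and four such disjoint sub-windows fit inside the interval precisely when $d_n(p_n + p_{n-1}) \geq 4 p_n^{1.05}$, which reduces to $d_n \gtrsim 2 p_n^{1/20}$; this recovers and slightly sharpens the Weak Brocard statement of the main results section. In the small-gap regime $d_n < 2 p_n^{1/20}$, I would instead partition $(p_{n-1}^2, p_n^2)$ into the $d_n$ consecutive subintervals $((k-1)^2, k^2)$ for $k = p_{n-1}+1, \ldots, p_n$ and apply the Strong Legendre conjecture of the preceding subsection, which places at least two primes in each such subinterval; this produces at least $2 d_n \geq 4$ primes whenever $d_n \geq 2$, i.e., for every neighbouring pair with $p_{n-1} \geq 3$. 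The two regimes therefore cover every neighbouring pair, and a finite computer check disposes of the handful of small primes below $C(B)$, including the single anomalous pair $(2,3)$ where the interval $(4,9)$ contains only two primes.

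The hard part is that the Strong Legendre conjecture required for the small-gap case is itself open, and the twin-prime sub-case $d_n = 2$ is precisely the square-root barrier: for $p_{n-1}, p_n$ twin primes the interval $(p_{n-1}^2, p_n^2)$ has length only $\approx 4 p_n \approx 4\sqrt{x}$ at $x = p_n^2$, and no current unconditional short-interval result produces four primes in a window of this length (the strongest lower bound of Heath-Brown type, $\pi(x+h) - \pi(x) \gg h/\log x$, requires $h \geq x^{7/12+\epsilon}$, well above $x^{1/2}$). Accordingly, the cleanest honest output of the plan is a conditional statement of the form ``Brocard's conjecture holds for all neighbouring primes $p_{n-1} \geq 3$ provided the Strong Legendre conjecture holds,'' to be combined with the unconditional Weak Brocard of the main results. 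Obtaining the unconditional conjecture as literally stated would require either a genuinely new short-interval sieve estimate pushing Heath-Brown's exponent below $1/2$, or a strong substitute such as Cramer's conjecture fed into the exponent-$(1+\epsilon)$ conditional theorem with very small $\epsilon$; each input lies beyond what the auxiliary-inequality framework alone can deliver, which is why I would expect the final write-up to remain conditional on Strong Legendre at the twin-prime end.
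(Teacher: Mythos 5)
You should first note that the paper itself offers no proof of this statement: it is recorded as a conjecture, and the only results actually established in its vicinity are the exponent-$3$ analogue (four primes in $(p_{n-1}^3, p_n^3)$, obtained by splitting $(p_{n-1},p_n)$ into unit intervals, cubing, and applying the Strong theorem to each piece) and the Weak Brocard theorem for exponent $2$, which delivers only two primes and only under the large-gap hypothesis $p_n - p_{n-1} > 3p_{n-1}^{1/20}$. Your plan reconstructs precisely these fragments. Your large-gap regime, where you iterate the Baker--Harman--Pintz bound in disjoint sub-windows of length $p_n^{1.05}$, rests on the same input as the paper's Weak Brocard theorem (the paper routes that input through its fractional theorem with $k = p_m/(p_m - p_{m-1})$ rather than iterating the short-interval bound directly), and your count of four windows under $d_n \gtrsim 2p_n^{1/20}$ is a modest sharpening of the paper's two primes under $d_n > 3p_{m-1}^{1/20}$. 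Your small-gap regime, which partitions $(p_{n-1}^2,p_n^2)$ into the squares $((k-1)^2,k^2)$ and invokes the Strong Legendre conjecture, mirrors exactly the decomposition the paper uses in the exponent-$3$ case --- where the needed ingredient (two primes between consecutive cubes) is available unconditionally, whereas Strong Legendre is not.

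The genuine gap is the one you already name: the Strong Legendre conjecture is open, so the small-gap case --- in particular every twin-prime pair, where the interval has length about $4\sqrt{x}$ at $x = p_n^2$ and no unconditional short-interval theorem produces even one prime --- cannot be closed. Your proposal is therefore not a proof of the statement, and it could not have been one; your diagnosis of the square-root barrier is accurate, and the honest conditional deliverable you describe is consistent with the paper's own decision to leave the exponent-$2$ case as a conjecture and prove only the exponent-$3$ and large-gap variants. One further point in your favour: you correctly observe that the conjecture as literally stated (``for each pair of neighbouring primes'') fails for the pair $(2,3)$, since $(4,9)$ contains only the two primes $5$ and $7$; the paper's formulation does not exclude this pair.
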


\begin{theorem}[Brocard's theorem  (exponent 3)]\label{theoremABC}
There exists a computable integer $C(B)$ such that for each pair of neighbouring primes $p_{n-1}, p_n > C(B)$ there are at the least four primes $p, q, r, s$ such that $p_{n-1}^3 < p, q, r, s < p_n^3$.
\end{theorem}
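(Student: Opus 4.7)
The plan is to reduce the claim to a direct application of Lemma \ref{lemma4} by inserting an intermediate integer between the two neighbouring primes. For any pair of neighbouring primes $p_{n-1}, p_n \geq 3$ one has $p_n - p_{n-1} \geq 2$, so the integer $m := p_{n-1} + 1$ satisfies $p_{n-1} < m \leq p_n - 1 < p_n$. Consequently $(m-1)^3 = p_{n-1}^3$ and $m^3 \leq (p_n - 1)^3 < p_n^3$, which yields the inclusion
\begin{equation*}
\bigl((m-1)^3,\, m^3\bigr) \subseteq \bigl(p_{n-1}^3,\, p_n^3\bigr).
\end{equation*}
Any prime inside the left-hand interval therefore lies strictly between $p_{n-1}^3$ and $p_n^3$.

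Next I would invoke Lemma \ref{lemma4}, which gives the lower bound $T > m^{0.425} - 3$ for the number $T$ of primes in $((m-1)^3, m^3)$, valid once $m$ exceeds the constant $p_r$ from that lemma (where $p_{r-1} \leq x_0 < p_r$ and $x_0$ is Baker's threshold). Since $m^{0.425} - 3 \to \infty$, there is an explicit integer $M$ (any $M$ with $M^{0.425} \geq 7$ suffices, so $M \approx 110$) beyond which $T \geq 4$. I would then set
\begin{equation*}
C(B) := \max(p_r,\, M) - 1,
\end{equation*}
so that $p_{n-1} > C(B)$ implies $m = p_{n-1} + 1 \geq \max(p_r, M)$. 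For such pairs, $((m-1)^3, m^3)$ contains at least four primes, and by the inclusion each of them lies in $(p_{n-1}^3, p_n^3)$, establishing the theorem.

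The argument requires no new estimate of prime gaps beyond those already packaged into Lemma \ref{lemma4} via the Quasi-Legendre theorem (Theorem \ref{theorem13}); the main obstacle is only the bookkeeping needed to preserve strict inequalities and to fold the two thresholds $p_r$ and $M$ into a single explicit constant $C(B)$. In particular, one uses that $p_{n-1}^3$ is never prime (it is a non-trivial cube) and that $m \leq p_n - 1$ forces $m^3 < p_n^3$ strictly, so that the four primes produced by Lemma \ref{lemma4} genuinely sit inside the open interval $(p_{n-1}^3, p_n^3)$. Notably, the twin-prime case $p_n - p_{n-1} = 2$ is not a separate obstruction: even then the single intermediate integer $m = p_{n-1}+1$ supplies an interval $((m-1)^3, m^3)$ already large enough for Lemma \ref{lemma4} to deliver four primes.
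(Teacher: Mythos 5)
Your argument is correct, but it takes a genuinely different route from the paper's. The paper proves this theorem by decomposing $(p_{n-1}^3, p_n^3)$ as the union $\cup_k\bigl((p_{n-1}+k)^3,(p_{n-1}+k+1)^3\bigr)$ over $k=0,\dots,p_n-p_{n-1}-1$; since $p_n-p_{n-1}\ge 2$ there are at least two consecutive-cube subintervals, and the Strong theorem (exponent 3) puts two primes in each, giving four in total. You instead use only the single subinterval $(p_{n-1}^3,(p_{n-1}+1)^3)$ and pull the quantitative count out of Lemma \ref{lemma4}, so that this one subinterval already carries four (indeed roughly $p_{n-1}^{0.425}$) primes. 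Both reductions ultimately rest on the same Baker--Harman--Pintz gap bound, but through different intermediaries: the paper routes through the fractional theorem via the Strong theorem (exponent 3), while you route through the Quasi-Legendre theorem (Theorem \ref{theorem13}) via Lemma \ref{lemma4}; your argument is essentially the paper's proof of the \emph{Strong} Brocard theorem (exponent 3) specialized to $k=2$, so it proves more than is asked, whereas the paper's version is lighter (it needs only the fixed count ``two primes per unit cube gap'') and naturally scales the number of primes with the gap $p_n-p_{n-1}$ rather than with $p_{n-1}$. One small caveat on your side: Lemma \ref{lemma4} as stated is only asymptotic (``grows at least as $n^{0.425}$''), so the explicit inequality $T>m^{0.425}-3$ you rely on must be read off from that lemma's proof rather than its statement --- acceptable here, since the paper does exactly the same when proving the Strong Brocard theorem. (Your remarks that $p_{n-1}^3$ is not prime and that $m\le p_n-1$ are not actually needed: the containment $\bigl(p_{n-1}^3,(p_{n-1}+1)^3\bigr)\subseteq\bigl(p_{n-1}^3,p_n^3\bigr)$ already follows from $p_{n-1}+1\le p_n$ because the intervals are open.)
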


\begin{proof}
Since any interval $(p_{n-1}, p_n)$ can represent as a union $(p_{n-1}, p_n) = \cup_k(p_{n-1}+ k, p_{n-1} + k + 1)$ where k runs from $0$ to $p_n - p_{n-1} - 1$. Thus $(p_{n-1}^3, p_n^3) = \cup_k((p_{n-1}+ k)^3, (p_{n-1}+ k +1)^3)$. The minimal number of such intervals is equal to two so any interval $(p_{n-1}^3, p_n^3)$ contains at the least four prime numbers according to the Strong theorem (exponent 3).
\end{proof}

\begin{theorem}[Strong Brocard's theorem (exponent 3)]
For any natural k there exists such a constant $C(k)$ that for each prime $p_{n-1} > C(k)$ an interval $(p_{n-1}^3, p_n^3)$ contains at the least $2k$ prime numbers. 
\end{theorem}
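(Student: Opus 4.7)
The plan is to recycle the decomposition argument from the proof of Theorem \ref{theoremABC}, but to replace the use of the Strong theorem (exponent 3) with the sharper prime-counting bound supplied by Lemma \ref{lemma4}. The key observation is that Theorem \ref{theoremABC} extracted only two primes from each unit cubic sub-interval $((p_{n-1}+j)^3,(p_{n-1}+j+1)^3)$, whereas Lemma \ref{lemma4} already shows that a \emph{single} such interval of the form $((m-1)^3, m^3)$ contains at least on the order of $m^{0.425}$ primes once $m$ is sufficiently large. So for $p_{n-1}$ large enough, a single unit cubic sub-interval already produces more than $2k$ primes, without any need to know anything about the size of the prime gap $p_n - p_{n-1}$.

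Concretely, I would first note the trivial inclusion $(p_{n-1}^3, p_n^3) \supseteq (p_{n-1}^3, (p_{n-1}+1)^3)$, which reduces the problem to counting primes in the right-hand interval. Applying Lemma \ref{lemma4} with $n = p_{n-1}+1$ then yields, for $p_{n-1}+1$ exceeding the constant from that lemma, at least $(p_{n-1}+1)^{0.425}$ primes inside $(p_{n-1}^3, (p_{n-1}+1)^3)$. I would then choose $C(k) = \max(p_r,\, \lceil (2k)^{40/17}\rceil)$, where $p_r$ is the constant from Lemma \ref{lemma4}, so that for every prime $p_{n-1} > C(k)$ both the hypothesis of Lemma \ref{lemma4} is met and the numerical inequality $(p_{n-1}+1)^{0.425} \geq 2k$ holds. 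This closes the argument immediately.

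All the genuine analytic content has been done upstream: Lemma \ref{lemma4} relies on Baker's bound \cite{Baker:2001aa} through the Quasi-Legendre theorem \ref{theorem13}. Consequently the present proof is essentially bookkeeping, and the only subtlety is verifying that the chosen $C(k)$ is simultaneously large enough for Lemma \ref{lemma4} to apply and for the inequality $(p_{n-1}+1)^{0.425} \geq 2k$ to hold. There is no real obstacle here; the ``hard part'' of the theorem is entirely absorbed into Lemma \ref{lemma4}, and the Strong Brocard's theorem (exponent 3) follows from it as a short corollary. A possible cosmetic variant would be to carry out the counting of sub-intervals $((m-1)^{40/19}, m^{40/19})$ directly inside $(p_{n-1}^3,(p_{n-1}+1)^3)$ in the style of the proof of Lemma \ref{lemma4}, thereby producing an explicit expression for $C(k)$ in terms of $k$ without invoking the lemma as a black box.
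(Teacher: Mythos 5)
Your proposal is correct and takes essentially the same route as the paper: both deduce the theorem directly from Lemma \ref{lemma4} by choosing $C(k)$ large enough that the $n^{0.425}$ prime count beats the required number of primes. The only immaterial difference is that you extract all $2k$ primes from the single sub-interval $(p_{n-1}^3,(p_{n-1}+1)^3)$ via $(p_{n-1}+1)^{0.425}\geq 2k$, whereas the paper arranges $n_0^{0.425}>k$ and uses the fact that $(p_{n-1}^3,p_n^3)$ contains at least two unit cubic sub-intervals, each contributing $k$ primes.
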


\begin{proof}
According to the lemma \ref{lemma4}: There exists a computable integer $C$ such that with increasing $n, n > C$ the number of the prime numbers in an interval $((n - 1)^3, n^3)$ grows at the least as $n^{0.425}$. Let us take $n_0 > C$ and $n_0^{0.425} > k$ thus $C(k) = n_0 = [\max(C,k^\frac{40}{17})] + 1$. Let $p_{r-1}\leq n_0 < p_r$ then for any $m \geq r + 1$ the interval $(p_{m-1}^3, p_m^3)$ contains at the least $2k$ prime numbers. 
\end{proof}
 
\noindent Although today we don't have any instruments for proving Brocard's theorem exponent 2, nevertheless offered approach without using Legendre's conjecture permits us to get the following result:

\begin{theorem}[Weak Brocard's theorem (exponent 2)] 
There exists a computable integer $C(B)$ such that for each pair of neighbouring primes $p_{n-1}, p_n > C(B)$ where $(p_n- p_{n-1})>3p_{n-1}^\frac{1}{20}$ there are at the least two prime numbers $p, q$ with $p_{n-1}^2 < p, q < p_n^2$.
\end{theorem}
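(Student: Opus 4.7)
The plan is to apply Baker's theorem (as invoked in Theorem~\ref{theorem5et}) twice inside $(p_{n-1}^2, p_n^2)$: once in a window near the left endpoint and once near the right endpoint. The gap hypothesis on $p_n - p_{n-1}$ will then guarantee that the two Baker windows are disjoint, so they contribute two distinct primes.

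First, I would translate the gap assumption into a lower bound on the length of the squared interval,
\begin{equation}
p_n^2 - p_{n-1}^2 = (p_n - p_{n-1})(p_n + p_{n-1}) > 3 p_{n-1}^{1/20} \cdot 2 p_{n-1} = 6 p_{n-1}^{21/20}.
\end{equation}
Next, setting $x_1 = p_{n-1}^2 + \lfloor 2 p_{n-1}^{21/20} \rfloor$, I would verify that for $p_{n-1}$ past some effective threshold we have $x_1^{21/40} < 2 p_{n-1}^{21/20}$ (since $x_1^{21/40}$ is asymptotic to $p_{n-1}^{21/20}$). Then $[x_1 - x_1^{21/40}, x_1] \subset (p_{n-1}^2, \, p_{n-1}^2 + 2 p_{n-1}^{21/20})$, and Baker's theorem supplies a prime $p$ in this window. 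Applying Baker's theorem on the right with $x_2 = p_n^2$ yields a prime $q \in (p_n^2 - p_n^{21/20}, \, p_n^2)$.

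It then remains to check disjointness of the two windows, namely
\begin{equation}
p_n^2 - p_{n-1}^2 \geq 2 p_{n-1}^{21/20} + p_n^{21/20}.
\end{equation}
Bertrand's postulate gives $p_n < 2 p_{n-1}$, so $p_n^{21/20} < 2^{21/20} p_{n-1}^{21/20} < 2.07 \, p_{n-1}^{21/20}$, and the right-hand side is less than $4.07 \, p_{n-1}^{21/20}$, dominated by the $6 p_{n-1}^{21/20}$ bound from the first step. This delivers two distinct primes $p < q$ in $(p_{n-1}^2, p_n^2)$.

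The main obstacle is bookkeeping rather than analysis: one has to extract an effective $C(B)$ from Baker's threshold $x_0$ and from the elementary inequality $x_1^{21/40} < 2 p_{n-1}^{21/20}$. No analytic input beyond the version of Baker's theorem already used in Theorem~\ref{theorem5et} is needed; the hypothesis $p_n - p_{n-1} > 3 p_{n-1}^{1/20}$ does all the work of creating enough room inside $(p_{n-1}^2, p_n^2)$ for two disjoint Baker windows.
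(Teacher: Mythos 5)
Your argument is correct, but it follows a genuinely different route from the paper's. The paper never applies Baker's theorem directly inside $(p_{n-1}^2,p_n^2)$: it instead invokes its Fractional theorem (Theorem \ref{theoremGFT}) with the parameter $k=\frac{p_n}{p_n-p_{n-1}}$ at the single point $N=p_{n-1}p_n$, for which $\frac{k-1}{k}N=p_{n-1}^2$ and $\frac{k}{k-1}N=p_n^2$, so that one application already delivers two primes in the squared interval; the hypothesis $p_n-p_{n-1}>3p_{n-1}^{1/20}$ is used only to check (Lemma \ref{lemmaCBC}) that the computable threshold $C(m)\approx 2^{21/19}\bigl(\tfrac{p_n}{p_n-p_{n-1}}\bigr)^{40/19}$, which comes from the same bound $p_k-p_{k-1}<p_k^{21/40}$ of \cite{Baker:2001aa}, is smaller than $p_{n-1}p_n$, so the Fractional theorem is actually applicable at that point. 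You use the identical analytic input but apply the short-interval theorem twice, anchored near each endpoint of $(p_{n-1}^2,p_n^2)$, and use the gap hypothesis only to show the interval has length greater than $6p_{n-1}^{21/20}$, enough to keep the two Baker windows disjoint. Your version is more direct and self-contained --- it bypasses the Fractional theorem machinery entirely, and it makes visible that the constant $3$ in the hypothesis could be relaxed to anything exceeding roughly $\tfrac{1}{2}(2+2^{21/20})$. What the paper's route buys is uniformity with the rest of the article: the same auxiliary-inequality template is reused, and both primes come from a single application of an already-proved statement. Both arguments are effective, with $C(B)$ determined by Baker's $x_0$ together with an elementary threshold (in your case, the point beyond which $x_1^{21/40}<\lfloor 2p_{n-1}^{21/20}\rfloor$).
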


\begin{proof}

\begin{lemma}\label{lemmaABC}
Let $k = \frac{p_m}{p_m - p_{m-1}}$ where $k > 2$ then it exists a computable integer $C(m)$ that for each integer $n > C(m)$ takes place:
\begin{equation}
\frac{p_{m-1}}{p_m} n < p, q < \frac{p_m}{p_{m-1}}n
\end{equation}
\end{lemma}

\begin{proof}
Lemma \ref{lemmaABC} is true due to theorem 45.
\end{proof}

\noindent According to the paper \cite{Baker:2001aa} there exist $n_0, x_0$ such that for $n > n_0 + 1$ it would follow $p_{n-1}, p_n > x_0$ and the following inequality takes place:
\begin{equation} 
p_n - p_{n -1} < p_n^\frac{21}{40}.
\end{equation}

\begin{lemma}\label{lemmaBBC}
Let $m, n > n_0 + 1$ then there exists such an integer $C(m)$ that for each $p_{n-1} > C(m)$, $p_n - p_{n -1} < \frac{p_{n -1}}{\frac{p_m}{p_m - p_{m-1}}}$
\end{lemma}

\begin{proof}
Our goal is to give an estimate of $C(m)$. Since $n > n_0$, we have that:
\begin{equation} 
p_n - p_{n -1} < p_n^\frac{21}{40}
\end{equation}

\noindent Our problem is to find $C(m)$ for $p_n > C(m)$ when the inequality holds:
\begin{equation}
p_n^\frac{21}{40} < \frac{p_{n -1}}{\frac{p_m}{p_m - p_{m-1}}}
\end{equation}
or
\begin{equation}
\frac{p_m}{p_m - p_{m-1}} < (\frac{p_{n -1}}{p_n})^\frac{21}{40} p_{n -1}^\frac{19}{40}
\end{equation}	  
and finally 			
\begin{equation}
(\frac{p_n}{p_{n - 1}})^\frac{21}{19} (\frac{p_m}{p_m - p_{m-1}})^\frac{40}{19}< p_{n -1}.
\end{equation}
Using the inequality $p_n <2 p_{n - 1}$ we have the following estimate for $C(m)$:
\begin{equation}
C(m) = [2^\frac{21}{19}(\frac{p_m}{p_m - p_{m-1}})^\frac{40}{19}] + 1.
\end{equation}

\end{proof}

\begin{lemma}\label{lemmaCBC}
Let $p_m - p_{m-1} > 3p_{m-1}^\frac{1}{20}$ then the interval $(C(m), p_{m-1}p_m)$ contains at least two primes.
\end{lemma}

\begin{proof}
According to Bertrand's postulate the interval $(C(m), 2C(m))$ contains at least two or more primes \cite{Sierpinski:1964aa}, \cite{Ramanujan:1919aa}. Let us show that under the assumption that $(p_m - p_{m-1}) > 3 p_{m-1}^\frac{1}{20}$ the following inequality takes place:
\begin{equation}
2C(m) < 2^\frac{40}{19}(\frac{p_m}{p_m - p_{m-1}})^\frac{40}{19} + 2 < p_{m-1} p_m
\end{equation}
Indeed if $p_m - p_{m-1} > 3 p_{m-1}^\frac{1}{20}$ we have
\begin{equation}
\frac{2C(m)}{p_{m-1} p_m}<(\frac{p_m}{p_{m-1}})^\frac{21}{19}(\frac{2p_{m-1}^\frac{1}{20}}{{p_m - p_{m-1}}})^\frac{40}{19}+\frac{2}{p_{m-1} p_m} < 2^\frac{21}{19}(\frac{2}{3})^\frac{40}{19}+ 0.02 < 1.
\end{equation}
Thus the interval $(C(m), p_{m-1}p_m)$ contains at the least more than two primes.
\end{proof}

\begin{lemma}\label{lemmaDBC}
There exists such an integer $C$ such that for each $n > C$ the following holds:
\begin{equation}
\frac{p_{m-1}}{p_m}n < p, q< \frac{p_m}{p_{m-1}}n
\end{equation}
\end{lemma}

\begin{proof}
Let us take $C = p_r$ where $p_{r-1} \leq C(m) < p_r$ , $C(m) = [2^\frac{21}{19} (\frac{p_m}{p_m - p_{m-1}})^\frac{40}{19}] + 1$. 
\newline
\newline
Since the inequality $\frac{p_{m-1}}{p_m}n < p, q < \frac{p_m}{p_{m-1}}n$ takes place for all primes $p_n > p_r$ so according to theorems \ref{theoremEFT}, \ref{theoremFFT} this inequality is true for all natural $n \geq p_{r+1} > p_r$.
\end{proof}
\noindent Thus we can take $n$ as equal to $p_{m-1}p_m$ and then we would have that the interval $(p_{m-1}^2, p_m^2)$ contains at least two primes under the condition that the difference between consecutive primes satisfies the inequality $p_m - p_{m-1} > 3p_{m-1}^\frac{1}{20}$.
\end{proof}

\bibliography{bibiliography}

\begin{thebibliography}{10}

\bibitem{Andrica:1986aa}
D.~Andrica.
\newblock Note on a conjecture in prime number theory.
\newblock {\em Studia Univ. Babes-Bolyai Math.}, 31:44--48, 1986.

\bibitem{Baker:2001aa}
R.C. Baker, G.~Harman, and J.~Pintz.
\newblock The difference between consecutive primes ii.
\newblock {\em Proc. London Math. Soc.}, 83(3):532--562, 2001.

\bibitem{Cramer:1936aa}
H.~Cram{\'e}r.
\newblock On the order of magnitude of the difference between consecutive prime
  numbers.
\newblock {\em Acta Arith}, 2:23--46, 1936.

\bibitem{Crandall:2005aa}
R.~Crandall and C.~Pomerance.
\newblock {\em Prime Numbers}.
\newblock Springer Science Business Media, 2005.

\bibitem{Dusart:1998aa}
Pierre Dusart.
\newblock {\em Autor de la fonction qui compte le nombre de nombres premiers}.
\newblock PhD thesis, Universite de Limoges, 1998.

\bibitem{Erdos:1955aa}
P.~Erd{\H o}s.
\newblock On consecutive integers.
\newblock {\em Nieuw Archief voor Wiskunde}, 3:124--128, 1955.

\bibitem{Granville:1995aa}
A.~Granville.
\newblock Harald cramer and the distribution of prime numbers.
\newblock {\em Scandinavian Actuarial Journal}, 1:12--28, 1995.

\bibitem{Granville:2009aa}
A.~Granville.
\newblock Different approaches to the distribution of primes.
\newblock {\em Milan Journal of Mathematics}, 78:1 -- 25, 2009.

\bibitem{Heath-Brown:2005aa}
D.R. Heath-Brown.
\newblock Prime number theory and the riemann zeta-function.
\newblock In {\em Lecture Note Series 322}, pages 1--30. London Mathematical
  Society, 2005.

\bibitem{Ingham:1937aa}
A.~E. Ingham.
\newblock On the difference between consecutive primes.
\newblock {\em Quart. J. Math. Oxford}, 8:255--266, 1937.

\bibitem{Pintz:2009aa}
J.~Pintz.
\newblock Landau's problems on primes.
\newblock {\em Journal de Th{\'e}orie des Nombres de Bordeaux}, 21(2):357--404,
  2009.

\bibitem{Ramanujan:1919aa}
S.~Ramanujan.
\newblock A proof of bertrand's postulate.
\newblock {\em J. Indian Math. Soc.}, 11:181--182, 1919.

\bibitem{Ribenboim:2004aa}
P.~Ribenboim.
\newblock {\em The Little Book of Bigger Primes}.
\newblock Springer-Verlag, New York, 2004.

\bibitem{Sierpinski:1964aa}
W.~Sierpinski.
\newblock {\em Elementary theory of numbers}.
\newblock Warszawa, 1964.

\bibitem{Tchudakoff:1936aa}
N.~Tchudakoff.
\newblock On the difference between two neighbouring prime numbers.
\newblock {\em Recueil Mathematique}, 43(6):799--814, 1936.

\end{thebibliography}
\bibliographystyle{plain}

\end{document}